\titleformat*{\section}{\normalsize\bfseries}
\titleformat*{\subsection}{\normalsize\it\bfseries}
\titlespacing*{\section}{0pt}{6pt}{6pt}
\numberwithin{equation}{section}
\newcommand{\sref}[2]{\hyperref[#2]{#1 \ref{#2}}}
\newcommand{\myeqref}[1]{\hyperref[#1]{\eqref{#1} }}
\newcommand\keywords[1]{\text{keywords}: #1 }
\newcommand{\as}{$\operatorname{\mathbb{P}-a.s.}$}
\newtheorem{theorem}{Theorem}[section]  
\newtheorem{Corollary}{Corollary}[section]    
\newtheorem{lemma}{Lemma}[section]  
\newtheorem{proposition}{Proposition}[section]  
\newtheorem{example}{Example}[section]  
\newtheorem{definition}{Definition}[section]  
\newtheorem{remark}{Remark}[section]  
\date{\empty}
\renewenvironment{abstract}{\par\noindent\textbf{\abstractname}\ \ignorespaces}{\par\medskip}
\begin{document}

	\title{\large Quadratic BSDEs with Singular Generators and Unbounded Terminal Conditions: Theory and Applications}
			\author[a]{Wenbo Wang} 
		\author[a,b,$ \ast $]{Guangyan Jia}
		\affil[a]{\normalsize Zhongtai Securities Institute for Financial Studies,  Shandong University, Jinan, P.R. China}
		\affil[b]{\normalsize Shandong Province Key Laboratory of Financial Risk, Jinan, P.R. China}
		
		\vspace{0.8cm}
	\maketitle 		\thispagestyle{empty}
		\begin{spacing}{1.25}
		\begin{flushleft}
		{\small	
			\begin{abstract}
				
				\justifying \noindent
				We investigate a class of quadratic backward stochastic differential equations (BSDEs) with generators singular in $ y $. First, we establish the existence of solutions and a comparison theorem, thereby extending results in the literature. Additionally, we  analyze the stability property and the Feynman-Kac formula, and prove the uniqueness of viscosity solutions for the corresponding singular semilinear partial differential equations (PDEs). Finally, we demonstrate applications in the context of robust control linked to stochastic differential utility and certainty equivalent  based on $g$-expectation. In these applications, the coefficient of the quadratic term in the generator captures the level of ambiguity aversion and the coefficient of absolute risk aversion, respectively.
				
			\end{abstract}

\keywords{quadratic backward stochastic differential equation; singular generators; unbounded terminal conditions; viscosity solution; comparison theorem; stochastic differential utility} 
}
\end{flushleft}
\end{spacing}

\section{Introduction}

This paper focuses on a one-dimensional  backward stochastic differential equation (BSDE)
	\begin{equation}\label{eq1}
	Y_{t} = \xi + \int_{t}^{T}f(s,Y_{u},Z_{u})  du - \int_{t}^{T}Z_{u} dW_{u}, \quad t \in [0,\, T],
	\end{equation}
where $W$ denotes a standard $ d $-dimensional Brownian motion defined on a filtered probability space
$(\Omega,\, \mathcal{F},\, (\mathcal{F}_{s})_{s \in [0, \, T]}, \mathbb{P})$. Here, $(\mathcal{F}_{s})_{s \in [0, \, T]}$ is the $\mathbb{P}$-completion of the filtration generated by $W$. The terminal condition $ \xi $ is an $ \mathcal{F}_{T}$-measurable $ \mathbb{R} $-valued random variable and the generator $f: [0, T] \times \Omega \times \mathbb{R} \times \mathbb{R}^{1\times d} \mapsto \mathbb{R} $ is a progressively measurable process. Nonlinear BSDEs are pioneered under the Lipschitz condition by \citep{Peng1990}.  
BSDEs with generators that exhibit quadratic growth in regards to the variable $ z $ have been extensively researched by articles \citep{BriandHu2006, BriandHu2008, Tevzadze2008, 2013Monotone, BriandRichou2017} and others since \citep{Kobylanski2000} first investigated the situation of  bounded terminal conditions. This paper characterizes a specific category of BSDEs with the form of generators being $g(y)|z|^{2}$, which first appeared in \citep{DuffieEpstein1992} to our knowledge. 
Through employing It\^o-Krylov's formula as well as a ``domination method''(see \sref{Lemma}{lemBahlali}) derived from the existence result for reflected BSDEs obtained in \citep{Essaky2011}, 
\citep{Bahlali2017} demonstrated existence and uniqueness results when $g$ is globally integrable on $\mathbb{R}$. Another intriguing case is $f(y,\, z)=|z|^{2}/y$, that is to say, the generator $ f $ is singular at $ y=0 $. In this instance, \citep{Bahlali2018} assume that the generator $ f $ satisfies

\begin{equation}\label{eq2}
0 \leq f(s,\,\omega,\,y,\, z) \leq a_{s}+b_{s} y+\gamma_{s} z+\frac{\delta}{2y}|z|^{2},\, (s,\,\omega,\,y,\, z) \in [0,\, T]\times \Omega \times (0,\, +\infty) \times  \mathbb{R}^{1 \times d},
\end{equation}
for some processes $ a,\, b,\, \gamma $ and constant $ \delta $. 	
By utilizing the domination method, they were able to establish the existence of solutions in $ \mathcal{S}^{p} \times \mathcal{L}^{2} $. Furthermore, they proved a uniqueness result of bounded solutions using convex duality techniques. 
In this paper, we prove that solutions exist in $\mathcal{S}^{p} \times \mathcal{M}^{q}$, and establish the comparison theorem for $ L^{p} $ solutions through the $\theta $-techniques (see \citep{BriandHu2008}). Moreover, our approach allows for straightforward extension of these results to more generalized BSDEs with generators satisfying
$$ 0 \le f(s,\,\omega,\,y,\,z) \le a_{s}+b_{s}\phi(y)+\gamma_{s}|z|+\frac{|z|^{2}}{2\psi(y)} ,\,(s,\,\omega,\, y,\, z) \in [0,\, T] \times \Omega \times (0,\, +\infty) \times \mathbb{R}^{1 \times d},   $$
under appropriate assumptions. Moreover, we derive the stability property for $L^{p}$ solutions and introduce the Feynman-Kac formula within our theoretical framework. Additionally, we investigate the uniqueness of viscosity solutions to the related singular semilinear quadratic PDEs.

More importantly, this class of quadratic BSDEs with singularities and related partial differential equations (PDEs) have significant applications in physics, quantitative finance, biology and so on (see \citep{2012Giachetti, Shamarova2024, Vazquez2007, Bahlali2018}). For instance in finance, the conditional generalized entropic risk measure (see \citep{2021Generalized}) could be expressed in $ Y $, the value solution of a solution to such a BSDE. 
\citep{Bahlali2018} illustrated that $ Y $ can
be ordinally equivalent to the stochastic differential utility of Epstein-Zin type with relative risk aversion represented by $  0 \le \delta \neq 1$ in \myeqref{eq2}. In \sref{Subsection}{subsec5}, we shall demonstrate that $ Y $ can represent a stochastic differential utility concerning robustness while $ 1/\psi(y)$, the coefficient of $ |z|^{2}/2$, quantifies the level of ambiguity aversion. In \sref{Example}{ex2}, we show that $ Y $ can depict the certainty equivalent of the terminal value based on g-expectation while $ 1/\psi(y)$ represents the coefficient of absolute risk aversion. 
 
 	The structure of this paper is as follows. \sref{Sections}{sec2} and \ref{sec3} introduce the notations, present the existence results, and establish the comparison theorem. In \sref{Section}{sec4}, we prove a stability result for the $ L^{p} $ solutions, derive the Feynman-Kac formula within our theoretical framework, and verify the uniqueness of viscosity solutions to the associated PDE. Finally, \sref{Section}{sec5} presents an application linking robust control to stochastic differential utility, which slightly generalizes the results of \citep{Skiadas2003} and \citep{Maenhout2004} and includes an example of certainty equivalent based on g-expectation.

\section{Notations and Existence Results}\label{sec2}

	This section establishes the existence results for a class of quadratic BSDEs with singular generators. Firstly, we need to introduce the notations used in this paper. We say a process or random variable satisfies some property if this holds except on the empty subset. Thus we sometimes omit \as. $ \mathbb{E}_{t} \left[  \cdot \right] := \mathbb{E}\left[  \cdot \middle | \mathcal{F}_{t} \right] $ denotes the conditional mathematical expectation with respect to $ \mathcal{F}_{t} $. The set composed of stopping times $ \tau $ satisfying $ 0\le \tau \le T $ is represented by  $ \mathcal{T}_{0,T}  $. If $ N $ is an adapted and c\`adl\`ag process, define $ N^{\ast} := \sup_{t \in [0,T]} \lvert N_{t} \rvert $. Let us recall that class $ (D) $ consists of progressively measurable processes $ X $ satisfying that $\{ X_{\tau}: \tau \in \mathcal{T}_{0,\,T} \}$ is uniformly integrable. Sometimes we denote stochastic integral $ \int_{0}^{\cdot} Z_{s} dW_{s} $ by $ Z\circ W $ and denote by $ \mathscr{E}(X) $  the stochastic exponential of a one-dimensional local martingale $ X $. Given $p\geq 1$ and open sets $ U \subset \mathbb{R},\,V \subset \mathbb{R}^{n} $, let us define the following spaces and notations.

$\mathcal{C}^{p}(U)$: the space of functions from $U $ to $ \mathbb{R}$ having continuous p-th derivative.

$ L^{p}_{loc}(U)$:  the space of locally $ L^{p} $ integrable functions from $U $ to $ \mathbb{R}$.

$\mathcal{W}^{2}_{p, loc}(U) $: the Sobolev space of functions $ h :U \mapsto \mathbb{R}$ fulfilling $ h $ together with its generalized derivatives $ h' $ and $ h'' $ belong to $ L^{p}_{loc}(U) $.

$\mathcal{S}:=\mathcal{S}(\mathbb{R}) $, the space of adapted and continuous processes valued in $ \mathbb{R} $.

$ \mathbb{L}^{p}:= \mathbb{L}^{p}(\Omega, \mathcal{F}_{T}, \mathbb{P}; \mathbb{R}) $, the space of random variables $\eta$ fulfilling $ \eta$ is valued in $ \mathbb{R},\,\mathbb{E}\left[\lvert \eta\rvert ^{p}\right] < +\infty $, and $ \eta$ is $ \mathcal{F}_{T} $-measurable.

$\mathcal{S}^{\infty}:= \mathcal{S}^{\infty}(\mathbb{R}) $, the space of bounded processes in $  \mathcal{S}$.

$\mathcal{S}^{p}:= \mathcal{S}^{p}(\mathbb{R}) $, the space of all processes $Y\in \mathcal{S}$ such that $\lVert Y \rVert_{\mathcal{S}^{p}}:=\left(  \mathbb{E}[\sup\limits_{0 \le t \le T}\lvert Y_{t}\rvert ^{p}]\right)^{\frac{1}{p}} < + \infty $.

$\mathcal{L}^{2}:=\mathcal{L}^{2}(\mathbb{R}^{1\times d})$, the space of $\mathbb{R}^{1\times d}$-valued processes $N$ satisfying $N$ is progressively measurable and a.s. $ \int^{T}_{0}|N_{s}|^{2}ds < \infty $.

$\mathcal{M}^{p}:= \mathcal{M}^{p}(\mathbb{R}^{1\times d})$, the space of progressively measurable processes $G$ fulfilling that $G$ is valued in $\mathbb{R}^{1\times d}$ and $\lVert G \rVert_{\mathcal{M}^{p}}:=\left(  \mathbb{E}\left[\left(\int^{T}_{0}\lvert G_{s}\rvert ^{2}ds\right)^{p/2}\right]\right)^{1/p} < + \infty $.

$  \mathcal{K} $: the set of $\mathbb{R}$-valued processes $K$ satisfying $K_{0}=0 $, $K$ is continuous, nondecreasing, progressively measurable, and $K_{T} <+\infty$ a.s..

$BMO$: the space of all uniformly integrable martingales $ N $ satisfying $$ \lVert N \rVert_{BMO}: = \sup\limits_{\tau \in \mathcal{T}_{0,T }}\big\lVert \mathbb{E}\Big[ \lvert  N_{T}-N_{\tau}\rvert  \big| \mathcal{F}_{\tau} \Big]\big\rVert_{\infty} \! < \infty .$$

$  D^{2,+}u $: for any function $ u: V \mapsto \mathbb{R},\,D^{2,+}u$
maps $ V $ into the set of subsets of $ V\times \mathbb{R}^{n} \times S(n)  $, where $ S(n) $ is the set of real symmetric $ n \times n $ metrices. For $ \hat{x} \in V$, we define $ (u(\hat{x}),\,p,\, K) \in D^{2,+}u(\hat{x}) $ to mean the following inequality holds
$$ u(x) \le u(\hat{x}) + p^{T}(x-\hat{x}) + \frac{1}{2}(x-\hat{x})^{T}K(x-\hat{x})+o(|x-\hat{x}|^{2}) \text{ as } V \ni x \to \hat{x}.$$

$\bar{D}^{2,+}u $ denotes the closure of $ D^{2,+}u$. That's to say, $ (s,\,p,\,K) \in \bar{D}^{2,+}u(x)$ if there exist
$$ V \ni x_{n}\to x \text{ and } (u(x_{n}),\,p_{n},\,K_{n}) \in D^{2,+}u(x_{n})$$
such that $ (u(x_{n}),\,p_{n},\,K_{n}) \to (s,\,p,\,K) $(see more about viscosity solutions in \citep{Crandall1997}).

We sometimes describe the BSDE with generator $ f $ and terminal $ \xi $ as BSDE$ (\xi, f) $ rather than \myeqref{eq1} for notational simplicity.
A solution of BSDE$ (\xi, f) $ is defined as a pair of processes $(Y,\,Z):=\{(Y_{t},\,Z_{t})\}_{t \in [0,T]}  \in \mathcal{S} \times \mathcal{L}^{2} $ fulfilling that \as, \myeqref{eq1} holds and $ \int_{0}^{T} \lvert  f(u,\,Y_{u},\,Z_{u})\rvert  du < \infty $. Furthermore, we designate $ (Y,\,Z) := \{(Y_{t},\,Z_{t})\}_{t \in [0,T]}$ as a $ L^{p} $ solution provided that $ (Y,\,Z) \in \mathcal{S}^{p}\times  \mathcal{M}^{q}$ for some $ p>1,\,q>1 $. 
Now we establish the existence theorem for BSDE \myeqref{eq1}.
\begin{theorem} \label{thm:exist1} \ Assume BSDE \myeqref{eq1} satisfies that $ \xi >0 $, $ f $ is continuous in $ (y,\,z) $ for each fixed $ (s,\,\omega) $, and for any $ (s,\,\omega,\,y,\, z) \in [0,\, T]\times \Omega \times (0,\, +\infty) \times  \mathbb{R}^{1 \times d}, $
			\begin{equation}\label{eq3}
		0 \leq f(s,\,\omega,\,y,\, z) \leq a_{s}+b_{s} y+\gamma_{s} |z|+\frac{\delta}{2y}|z|^{2},  
		\end{equation}
	where $ \delta \ge 0 $ is a constant, $a,\,b$, and $ \gamma $ are nonnegative and progressively measurable processes with $  a > 0  \text{ or } b > 0$. Then BSDE \myeqref{eq1} admits a solution $ (Y,\,Z) $ such that $Y \in \mathcal{S}^{2p(1+\delta)} $ for some $ p>1 $, if
	$$\mathbb{E}\left[\exp\left\{\frac{p}{p-1}\int_{0}^{T} \gamma_{s}^{2} d s+ 2(1+\delta)p\int_{0}^{T}\left(a_{t}+b_{t}\right)d t \right\}\Big( \xi^{2p(1+\delta)}+1 \Big)\right]<+\infty. $$
	Moreover, $ Z \in \mathcal{M}^{2p} $ if $ \delta \neq 1 $.
\end{theorem}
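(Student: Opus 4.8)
The plan is to realize the solution via the domination method of \sref{Lemma}{lemBahlali}, using the two barriers furnished by the bounds $0 \le f \le \bar f$ in \eqref{eq3}, where $\bar f(s,\omega,y,z) := a_s + b_s y + \gamma_s |z| + \frac{\delta}{2y}|z|^2$. The lower barrier is the solution of BSDE$(\xi,0)$, namely the martingale $\underline Y_t = \mathbb E_t[\xi]$; since $\xi > 0$ this process is strictly positive, which is exactly what keeps the candidate solution away from the singularity of $f$ at $y=0$. The upper barrier is the solution $\bar Y$ of the dominating BSDE$(\xi,\bar f)$. Once both are available with $0 < \underline Y \le \bar Y$, \sref{Lemma}{lemBahlali} produces a solution $(Y,Z)$ of BSDE$(\xi,f)$ trapped between them, so that $0 < Y \le \bar Y$ and, on the random interval $[\underline Y, \bar Y] \subset (0,\infty)$, the generator is continuous with harmless singular term.

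The crux is therefore to construct the upper barrier together with the announced integrability. Motivated by the fact that $u(y)=y^{1+\delta}$ solves $u''/u' = \delta/y$ --- and is hence the change of variable that annihilates the singular quadratic term --- I would solve the transformed, non-singular BSDE with terminal value $\xi^{1+\delta}$ and generator
$$\tilde f(s,P,\tilde Z) = (1+\delta)\,a_s\, P^{\delta/(1+\delta)} + (1+\delta)\,b_s\, P + \gamma_s |\tilde Z|.$$
This generator is Lipschitz in $\tilde Z$ and of at most linear growth in $P$ (the exponent $\delta/(1+\delta) < 1$ being sublinear), so the classical $L^p$-theory applies and yields a solution $(P,\tilde Z) \in \mathcal S^{2p} \times \mathcal M^{2p}$ with $P > 0$. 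Setting $\bar Y := P^{1/(1+\delta)}$ and $\bar Z := \tilde Z/((1+\delta)\bar Y^{\delta})$, an application of It\^o's formula (licit since $\bar Y > 0$) verifies that $(\bar Y,\bar Z)$ solves BSDE$(\xi,\bar f)$, the quadratic term reappearing precisely to cancel the It\^o correction.

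The $\mathcal S^{2p(1+\delta)}$ bound for $Y$ then follows by transferring the $\mathcal S^{2p}$ bound for $P$ through $|P|^{2p} = \bar Y^{2p(1+\delta)} \ge Y^{2p(1+\delta)}$. Concretely, I would apply It\^o to $P^{2p}$, control the drift terms carrying $(1+\delta)(a_s+b_s)$ by a Gronwall argument contributing the factor $\exp\{2(1+\delta)p\int_0^T(a_t+b_t)\,dt\}$, and absorb $\gamma_s|\tilde Z|$ by Young's inequality together with a Girsanov shift, the conjugate exponent producing the weight $\exp\{\frac{p}{p-1}\int_0^T \gamma_s^2\,ds\}$. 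Combined with the terminal contribution $\xi^{2p(1+\delta)}$, this reproduces exactly the integrability hypothesis and shows $\bar Y \in \mathcal S^{2p(1+\delta)}$, whence $Y \in \mathcal S^{2p(1+\delta)}$.

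Finally, for the $Z$-estimate when $\delta \neq 1$, I would apply It\^o to $Y^2$ to get
$$\int_t^T |Z_u|^2\,du = \xi^2 - Y_t^2 + \int_t^T 2 Y_u f\,du - \int_t^T 2 Y_u Z_u\,dW_u,$$
and bound $2 Y_u f \le 2 a_u Y_u + 2 b_u Y_u^2 + 2\gamma_u Y_u |Z_u| + \delta |Z_u|^2$ via \eqref{eq3} and $Y>0$. Transposing the term $\delta \int_t^T |Z_u|^2 du$ leaves the factor $(1-\delta)$ in front of $\int_t^T |Z_u|^2\,du$: this is exactly where $\delta \neq 1$ enters, letting me isolate $\int |Z|^2$, estimate it by $\sup_t Y_t^2$ and the already-controlled quantities, and deduce $Z \in \mathcal M^{2p}$ after taking $p$-th moments and invoking the Burkholder--Davis--Gundy inequality. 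The step I expect to be the main obstacle is pinning down the sharp exponential constants in the $\mathcal S^{2p(1+\delta)}$ estimate so that they match the stated hypothesis, and, for $\delta > 1$, handling the then-negative factor $1-\delta$ --- most cleanly by carrying out the $Z$-estimate on the transformed pair $(P,\tilde Z)$ and translating back through $Z = \tilde Z/((1+\delta)Y^{\delta})$ rather than on $(Y,Z)$ directly.
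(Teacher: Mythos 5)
Your overall strategy---domination via \sref{Lemma}{lemBahlali} with lower barrier $\mathbb{E}_t[\xi]$ and an upper barrier produced by the power change of variable $y\mapsto y^{1+\delta}$---is exactly the paper's, but two steps you treat as routine are in fact the crux, and as written they fail. First, the existence of the upper barrier: you invoke ``classical $L^p$-theory'' for the transformed BSDE with generator $(1+\delta)a_sP^{\delta/(1+\delta)}+(1+\delta)b_sP+\gamma_s|\tilde Z|$. That theory requires the Lipschitz and linear-growth coefficients to be \emph{bounded} (deterministic) constants, whereas here $a$, $b$, $\gamma$ are merely nonnegative progressively measurable processes with no boundedness at all---the hypothesis only gives a joint exponential moment. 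This is precisely why the statement carries exponential weights instead of plain $L^{2p}$ norms, and why the paper does not stop at the transformed equation: it applies the domination lemma again several times, using $H(y)=\int_2^{2+y}\frac{dr}{(1+\delta)r+1}$ to absorb the random drift $\int_0^T(a_s+b_s)\,ds$ into the terminal value, then a $p$-th power transform plus Young's inequality to convert $\gamma_s|Z|$ into the linear term $\frac{p\gamma_s^2}{2(p-1)}Y$, arriving at a linear BSDE solved \emph{explicitly} by the conditional expectation with weight $\exp\{\frac{p}{2(p-1)}\int_t^T\gamma_s^2\,ds\}$; existence at every stage comes from domination, never from off-the-shelf Lipschitz theory. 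Your ``Gronwall argument plus Girsanov shift'' is a sketch of a priori estimates, not of existence, and the Girsanov step would additionally require verifying that the relevant stochastic exponential is a true martingale.

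Second, the estimate $Z\in\mathcal{M}^{2p}$ for $\delta>1$. Your proposed fix---estimate $\tilde Z$ for the transformed pair and translate back through $Z=\tilde Z/((1+\delta)Y^{\delta})$---does not work: $Y$ is bounded below only by the strictly positive but not uniformly positive process $\mathbb{E}_t[\xi]$, so a bound on $\int_0^T|\tilde Z_u|^2\,du$ gives no control of $\int_0^T|Z_u|^2\,du=\int_0^T|\tilde Z_u|^2/\big((1+\delta)^2Y_u^{2\delta}\big)\,du$ on the set where $Y$ is small; the singularity at $y=0$ is exactly the obstruction. The paper's device is a piecewise auxiliary function, $v(y)=\frac{1}{\delta-1}\big(\frac{y^{1+\delta}-1}{\delta+1}-\frac{y^2-1}{2}\big)$ for $y>1$ and $v(y)=\frac{y^2-1}{2}-(y-1)$ for $0<y\le 1$: on $\{Y>1\}$ one has $v'\ge 0$ and the identity $\frac12 v''(y)-\frac{\delta}{2y}v'(y)=\frac12$ cancels the singular quadratic term, while on $\{Y\le 1\}$ one has $v'\le 0$, so only the lower bound $f\ge 0$ is needed and $\frac12 v''=\frac12$ still produces the $|Z|^2$ contribution; It\^o's formula applied to $v(Y)$ then yields the $\mathcal{M}^{2p}$ bound in terms of $\sup_t Y_t^{2p(1+\delta)}$, $(\int_0^T(a_s+b_s)\,ds)^{2p}$ and $(\int_0^T\gamma_s^2\,ds)^{p(1+\delta)}$. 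You need this (or an equivalent treatment of the small-$Y$ region) to close the case $\delta>1$; your $\delta<1$ argument via It\^o on $Y^2$ is fine and coincides with the paper's choice $v(y)=\frac{y^2}{2(1-\delta)}$.
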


To prove \sref{Theorem}{thm:exist1}, we recall the domination method, which is first employed by \citep{Bahlali2013} to our knowledge, and further developed in \citep{Bahlali2017, Bahlali2018} and \citep{ Bahlali201903}.

\begin{lemma}\label{lemBahlali}(\citep{Bahlali201903}, Lemma 3.1). \ 
	We say that the BSDE$ (\xi, f) $ satisfies the domination conditions if there exist two BSDEs$ (\xi^{1}, f^{1}) $ and $ (\xi^{2}, f^{2}) $ such that:
	\begin{enumerate}
		\item[$ (1) $] $ \xi^{1} \le \xi \le \xi^{2} $;
		\item[$ (2) $] BSDE$ (\xi^{1}, f^{1}) $ and BSDE$ (\xi^{2}, f^{2}) $ have solutions $ (Y^{1},\,Z^{1}) $ and $ (Y^{2},\,Z^{2})$ in $\mathcal{S} \times \mathcal{L}^{2} $, respectively, such that
		$ Y^{1}_{s} \le Y^{2}_{s},\,\forall s \in [0,\,T] $.
		For every $ (s,\,\omega) \in [0,\,T] \times \Omega$, $ y \in [Y^{1}_{s}(\omega),\, Y^{2}_{s}(\omega)]$, and $ z \in \mathbb{R}^{d}$, the following inequalities hold:
		$$ f^{1}(s,\, \omega,\,y,\,z) \le f(s,\,\omega,\,y,\,z) \le f^{2}(s,\,\omega,\,y,\,z), $$
		and
		$$
		|f(s,\, \omega,\, y,\, z)| \le \eta_{s}(\omega)+\frac{\zeta_{s}(\omega)}{2}|z|^{2},
		$$
		where $ \eta  $ and $ \zeta $ are two $ \mathcal{F}_{t}$-adapted processes satisfying that $\zeta $ is continuous and $ \forall \omega,$  $ \, \int_{0}^{T}\lvert  \eta _{t}(\omega)\rvert  dt <+\infty $.	
	\end{enumerate}
 Assume that the BSDE$ (\xi, f) $ satisfies the domination conditions and  $ f $ is continuous in $ (y,\,z) $ for each fixed $ (s,\,\omega) $. Then the BSDE$ (\xi, f) $ admits at least one solution $ (Y,\,Z) $ such that 
	\begin{equation}\label{eq4}
 Y^{1}_{s} \le Y_{s} \le Y^{2}_{s},\, \forall s \in [0,\,T],
	\end{equation}
and there exist a maximal and a minimal solutions among all solutions satisfying \myeqref{eq4}.
\end{lemma}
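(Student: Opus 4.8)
The statement is recalled as Lemma 3.1 of \citep{Bahlali201903}, and since the excerpt signals that the domination method is ``derived from the existence result for reflected BSDEs obtained in \citep{Essaky2011}'', the plan is to reduce the domination problem to a doubly reflected BSDE between the barriers $Y^{1}$ and $Y^{2}$ and then to show that the reflecting processes vanish. First I would remove the dependence on $y$ outside the band by replacing $f$ with its projection $\bar f(s,\omega,y,z):=f\big(s,\omega,(y\vee Y^{1}_{s})\wedge Y^{2}_{s},z\big)$. Because the projected argument always lies in $[Y^{1}_{s},Y^{2}_{s}]$, the bound $|\bar f(s,\omega,y,z)|\le \eta_{s}(\omega)+\tfrac{\zeta_{s}(\omega)}{2}|z|^{2}$ now holds for every $y\in\mathbb{R}$, $\bar f$ remains continuous in $(y,z)$, and $\bar f$ coincides with $f$ on the band. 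Hence any solution of BSDE$(\xi,\bar f)$ whose $Y$-component stays in $[Y^{1},Y^{2}]$ is automatically a solution of BSDE$(\xi,f)$ obeying \eqref{eq4}, so it suffices to produce such a solution.

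Next I would invoke the existence theorem for doubly reflected BSDEs with a continuous, quadratic-growth generator from \citep{Essaky2011} to obtain a quadruple $(Y,Z,K^{+},K^{-})$ with $Y^{1}_{s}\le Y_{s}\le Y^{2}_{s}$ for all $s$, with $K^{+},K^{-}\in\mathcal{K}$, satisfying
$$Y_{t}=\xi+\int_{t}^{T}\bar f(s,Y_{s},Z_{s})\,ds+(K^{+}_{T}-K^{+}_{t})-(K^{-}_{T}-K^{-}_{t})-\int_{t}^{T}Z_{s}\,dW_{s},$$
together with the Skorokhod flat-off conditions $\int_{0}^{T}(Y_{s}-Y^{1}_{s})\,dK^{+}_{s}=0$ and $\int_{0}^{T}(Y^{2}_{s}-Y_{s})\,dK^{-}_{s}=0$. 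The quadratic bound on $\bar f$, the confinement of $Y$ between the It\^o processes $Y^{1}$ and $Y^{2}$, the pathwise continuity of $\zeta$ (hence its pathwise boundedness on $[0,T]$), and the a.s.\ integrability of $\eta$ are precisely the ingredients that let Essaky's construction---localization by stopping times together with an exponential change of variable taming the $|z|^{2}$ term---go through with no moment assumption on $\xi$.

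The crux is then to show $K^{+}\equiv K^{-}\equiv 0$. For the lower reflection, the flat-off condition confines the support of $dK^{+}$ to the contact set $\{Y=Y^{1}\}$, where also $dK^{-}=0$ (on it $Y=Y^{1}<Y^{2}$, away from the upper barrier). Using that $Y^{1}$ solves $-dY^{1}_{s}=f^{1}(s,Y^{1}_{s},Z^{1}_{s})\,ds-Z^{1}_{s}\,dW_{s}$, I would write the semimartingale decomposition of $D:=Y-Y^{1}\ge 0$. The occupation-times formula shows that $\langle D\rangle$ does not grow on its own zero set, forcing $Z=Z^{1}$ $ds$-a.e.\ on $\{Y=Y^{1}\}$; with the martingale part killed, balancing $dD=0$ on the contact set yields $dK^{+}_{s}=\big(f^{1}(s,Y^{1}_{s},Z^{1}_{s})-\bar f(s,Y^{1}_{s},Z^{1}_{s})\big)\,ds\le 0$ by the domination $f^{1}\le\bar f$. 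Since $K^{+}$ is nondecreasing, this forces $dK^{+}=0$, whence $K^{+}\equiv 0$; the symmetric argument with $\bar f\le f^{2}$ and $Y^{2}$ gives $K^{-}\equiv 0$. With both reflection terms gone, $(Y,Z)$ solves BSDE$(\xi,\bar f)$, hence BSDE$(\xi,f)$, inside the band, establishing \eqref{eq4}.

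Finally, for the existence of a maximal and a minimal solution among all solutions obeying \eqref{eq4}, I would exploit the monotone Lipschitz approximation underlying Essaky's existence theorem: approximating $\bar f$ from above by Lipschitz generators $\overline f_{n}\downarrow\bar f$ yields in the limit the maximal dominated solution, and from below by $\underline f_{n}\uparrow\bar f$ yields the minimal one, the ordering of any two dominated solutions following from comparison applied to these regularized BSDEs. The step I expect to be the main obstacle is the vanishing of the reflecting processes, i.e.\ the contact-set analysis, since it is there that the precise interplay between the barrier dynamics and the one-sided inequalities $f^{1}\le f\le f^{2}$ is exploited; by contrast the quadratic growth itself is handled routinely once $Y$ is trapped in the band and $\zeta$ is pathwise bounded.
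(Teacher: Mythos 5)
You should first be aware that the paper does not prove this lemma at all: it is quoted, definition and conclusion together, from \citep{Bahlali201903} (their Lemma 3.1), and the only hint the paper gives about its proof is the introductory remark that the domination method is ``derived from the existence result for reflected BSDEs obtained in \citep{Essaky2011}''. Your proposal reconstructs exactly that route --- truncation of $f$ to the band $[Y^{1},Y^{2}]$, existence for the doubly reflected BSDE between the barriers, vanishing of the reflection terms via $f^{1}\le f\le f^{2}$, extremal solutions by approximation --- so at the level of strategy you coincide with the source the paper cites rather than with anything argued in the paper itself.

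There is, however, a genuine gap in the step you yourself single out as the crux. To kill $dK^{-}$ on the contact set $\{Y=Y^{1}\}$ you assert that there $Y=Y^{1}<Y^{2}$. The hypotheses only give $Y^{1}\le Y^{2}$: the barriers may touch (take $\xi^{1}=\xi^{2}$ and $f^{1}=f^{2}$, so that $Y^{1}\equiv Y^{2}$ and every time is a two-sided contact time), and on $\{Y^{1}=Y=Y^{2}\}$ both $dK^{+}$ and $dK^{-}$ may charge simultaneously, so your balance $dK^{+}_{s}=(f^{1}-\bar f)\,ds\le 0$ does not follow; in the degenerate case the conclusion $K^{+}\equiv K^{-}\equiv 0$ is not even what the argument can yield (only an added minimality or mutual-singularity property of the construction would give that). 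What one can prove, and all that is needed, is that the two reflections cancel. A repair along your own lines: set $D:=Y-Y^{1}\ge 0$ and $E:=Y^{2}-Y\ge 0$; Tanaka's formula gives $\int_{0}^{t}\mathds{1}_{\{D_{s}=0\}}\,dD_{s}=\tfrac12 L^{0}_{t}(D)\ge 0$ (your ``balancing $dD=0$'' is not exact, but the extra local-time term has the favourable sign), and then, using $Z=Z^{1}$ $ds$-a.e.\ on $\{D=0\}$, the Skorokhod conditions and $f^{1}\le\bar f\le f^{2}$,
$$\tfrac12 L^{0}_{t}(D)+K^{+}_{t}\le\int_{0}^{t}\mathds{1}_{\{Y^{1}_{s}=Y^{2}_{s}\}}\,dK^{-}_{s},\qquad \tfrac12 L^{0}_{t}(E)+K^{-}_{t}\le\int_{0}^{t}\mathds{1}_{\{Y^{1}_{s}=Y^{2}_{s}\}}\,dK^{+}_{s}.$$
Adding the two and comparing with $K^{+}_{t}+K^{-}_{t}$ forces every inequality to be an equality, whence $K^{+}=K^{-}$; the reflection terms then cancel in the equation, so $(Y,Z)$ solves BSDE$(\xi,f)$ and satisfies \myeqref{eq4}. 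A second, smaller flaw: in your last paragraph, an approximation ``from above by Lipschitz generators $\overline f_{n}\downarrow\bar f$'' cannot exist when $\bar f$ is genuinely quadratic in $z$ (no Lipschitz-in-$z$ function dominates $|z|^{2}$), so the maximal and minimal dominated solutions must be extracted from the extremal solutions of the reflected equation provided by \citep{Essaky2011}, or by a Kobylanski-type monotone-stability argument, not from a Lipschitz upper envelope.
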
  

	\begin{proof}[Proof of Theorem 1.]
	Assume $ \delta \ge 0.$ Consider BSDEs
	\begin{equation}\label{eq5}
	Y^{U}_{t} = \xi + \int_{t}^{T} a_{s}+ b_{s}Y^{U}_{s} + \gamma_{s}\lvert Z^{U}_{s} \rvert +\dfrac{\delta}{2Y^{U}_{s}}|Z^{U}_{s}|^{2}ds -\int_{t}^{T} Z^{U}_{s} dW_{s} , 
	\end{equation}
	and
	$$ Y^{L}_{t}=\xi-\int_{t}^{T} Z^{L}_{s} d W_{s}.$$
	Since $ \xi $ is integrable, the second BSDE has a solution $ (Y^{L},\, Z^{L}) $ denoted by $Y^{L}_{t}= \mathbb{E}\left[\xi |\mathcal{F}_{t} \right]$ according to Proposition 1.1 in \citep{ Bahlali201903}. Using \sref{Lemma}{lemBahlali} (with $ Y^{2}_{t} = Y_{t}^{U}$ and $ Y^{1}_{t} = Y_{t}^{L} $) we infer that BSDE \myeqref{eq1} has a solution $ (Y,\, Z) $ satisfying $Y_{t} \ge \mathbb{E}\left[\xi \middle | \mathcal{F}_{t} \right]$ if the BSDE \myeqref{eq5} admits a solution $ (Y^{U},\, Z^{U}) $ satisfying $Y_{t}^{U} \ge Y_{t}^{L}$. 
	
	Define an invertible function $$u(y) := \dfrac{y^{1+\delta}-1}{1+\delta} ,\quad y > 0. $$
	Then $ u(\cdot) \in \mathcal{C}^{2}((0,\, +\infty)) $. It follows from It\^o's formula and \sref{Lemma}{lemBahlali} that the BSDE \myeqref{eq5} admits a solution $ (Y_{t}^{U},\, Z_{t}^{U}) $ satisfying $Y_{t}^{U} \ge Y_{t}^{L}$ if and only if the following equation admits a solution $ (Y_{t}^{0},\, Z_{t}^{0}) $ such that $Y_{t}^{0} \ge u(Y_{t}^{L})  $:
	\begin{equation}\label{eq6}
	Y^{0}_{t} = u\left( \xi \right)+ \int_{t}^{T} a_{s} \left[\left(\delta +1 \right)Y^{0}_{s}+1\right]^{\frac{\delta}{\delta+1}} + b_{s}\left[\left(\delta +1 \right)Y^{0}_{s}+1\right] + \gamma_{s}\lvert Z^{0}_{s} \rvert ds -\int_{t}^{T} Z^{0}_{s} dW_{s}.
	\end{equation}
	So far, the proof is similar to that in
	\cite{Bahlali2018}. They used results in \cite{Bahlali2015} to prove the equation \myeqref{eq6} has a solution $ (Y^{0},\, Z^{0}) $ such that $Y_{t}^{0} \ge u(Y_{t}^{L})$, while we continue to use \sref{Lemma}{lemBahlali}. It is apparent that equation \myeqref{eq6} has a solution $ (Y^{0},\, Z^{0}) $ such that $Y_{t}^{0} \ge u(Y_{t}^{L})$
	if the following equation admits a solution $ (Y^{1},\, Z^{1}) $ such that $Y_{t}^{1} \ge u(Y_{t}^{L})  $:
	\begin{equation}\label{eq7}
	Y^{1}_{t} = \left( u\left( \xi \right)\right) ^{+} + \int_{t}^{T} a_{s} \left[\left(\delta +1 \right)Y^{1}_{s}+1\right]^{\frac{\delta}{\delta+1}} + b_{s}\left[\left(\delta +1 \right)Y^{1}_{s}+1\right] + \gamma_{s}\lvert Z^{1}_{s} \rvert ds -\int_{t}^{T} Z^{1}_{s} dW_{s}.
	\end{equation}		
	According to Fatou's lemma and Jensen's inequality, BSDE \myeqref{eq7} admits a solution $ (Y^{1},\, Z^{1}) $ such that  $Y_{t}^{1} \ge u(Y_{t}^{L})$  
	if it admits a solution $ (Y^{1},\, Z^{1}) $ such that $ Y_{t}^{1} \ge 0. $ Define another invertible function$$ H(y) := \int_{2}^{2+y} \dfrac{dr}{(1+\delta)r+1} ,\, \ y>-2. $$ 
	Then $ H(\cdot) \in \mathcal{C}^{2}((-2,\, +\infty))$. By using It\^o's formula and \sref{Lemma}{lemBahlali} again, this holds if
	\begin{align*}
	Y_{t}^{2}=H\left(\left( u\left( \xi \right) \right)^{+}\right)+\int_{0}^{T}  \left( a_{s} + b_{s}\right) d s + \int_{t}^{T}  \gamma_{s}\lvert Z_{s}^{2}\rvert    +\frac{1+\delta}{2}\lvert Z^{2}_{s} \rvert^{2}d s-\int_{t}^{T} Z_{s}^{2} d W_{s}
	\end{align*}
	admits a solution $\left(Y^{2},\, Z^{2}\right)$ such that $Y_{t}^{2} \geq \int_{0}^{t}\left( a_{s} + b_{s}\right)ds.$ This holds if 
	\begin{align}\label{eq8}
	Y_{t}^{3}=H^{-1}\left(H\left(\left( u\left( \xi \right) \right)^{+}\right)+ \int_{0}^{T} \left( a_{s} + b_{s}\right) ds\right) +\int_{t}^{T} \gamma_{s}\lvert Z_{s}^{3}\rvert   
	d s-\int_{t}^{T} Z_{s}^{3} d W_{s}
	\end{align}
	admits a solution $\left(Y^{3},\, Z^{3}\right)$ satisfying $Y^{3}_{t} \geq \mathbb{E}\left[  Y^{3}_{T} \middle | \mathcal{F}_{t} \right]$.  
	Recognizing that $ a >0 $ or $ b >0$ ensures $\int_{0}^{T} \left(a_{s} + b_{s}\right)ds > 0$, this holds if for $ p > 1, $
	\begin{equation}\label{eq10}
	Y_{t}^{4}=\left(Y^{3}_{T}\right)^{p} + \int_{t}^{T} \left( \gamma_{s}\lvert Z_{s}^{4}\rvert  - \frac{p-1}{2p}\frac{\lvert Z^{4}_{s} \rvert^{2}}{Y^{4}_{s}} \right)d s-\int_{t}^{T} Z_{s}^{4} d W_{s}	
	\end{equation}	
	admits a solution $\left(Y^{4},\, Z^{4}\right)$ fulfilling 
	$Y^{4}_{t} \geq \left(\mathbb{E}\left[  Y^{3}_{T} \middle | \mathcal{F}_{t} \right]\right)^{p}$.
	
	Since the BSDE
	$ y_{t}=\left(Y^{3}_{T}\right)^{p} -\int_{t}^{T}\frac{p-1}{2p}\frac{\lvert z_{s} \rvert^{2}}{y_{s}} d s-\int_{t}^{T} z_{s} d W_{s}$
	has a solution $ (y_{t},\, z_{t}) $ denoted by 
	$y_{t} = \left(\mathbb{E}\left[  Y^{3}_{T} \middle | \mathcal{F}_{t} \right]\right)^{p}$, we infer from Young's inequality and \sref{Lemma}{lemBahlali} that BSDE \myeqref{eq10} admits a solution $\left(Y^{4},\, Z^{4}\right)$ satisfying $Y^{4}_{t} \geq \left(\mathbb{E}\left[Y^{3}_{T}\middle | \mathcal{F}_{t}  \right]\right)^{p}  $ if the BSDE
	\begin{equation}\label{eq11}
	Y_{t}^{5} = \left(Y^{3}_{T}\right)^{p} +  \int_{t}^{T} \frac{p \gamma_{s}^{2}}{2(p-1)}Y^{5}_{s}ds -\int_{t}^{T} Z_{s}^{5} d W_{s} 
	\end{equation}
	admits a solution satisfying $Y_{t}^{5} \ge y_{t}$. It's clear from the integrability assumption that \myeqref{eq11} admits a solution $\left(Y^{5},\, Z^{5}\right)$ given by
	$$  Y_{t}^{5}=\mathbb{E}\bigg[\exp\left\{\frac{p}{2(p-1)}\int_{t}^{T} \gamma_{s}^{2} d s\right\}\left(  H^{-1}\left(H\left(\left( u\left( \xi \right) \right)^{+}\right)+\int_{0}^{T}  \left( a_{s} + b_{s}\right) d s\right)\right)^{p}\mid \mathcal{F}_{t} \bigg] \ge y_{t} .$$
	Therefore, going back to BSDE \myeqref{eq1}, we deduce that BSDE \myeqref{eq1} has a solution $ (Y,\,Z) $ such that 
$$	\mathbb{E}\left[ \xi \mid \mathcal{F}_{t} \right] \le Y_{t} \le u^{-1}\left( \left( Y_{t}^{5} \right)^{1/p} \right). 
$$
	It is evident from  Doob's $ L^{p} $ inequality and Jensen's inequality that $ Y \in \mathcal{S}^{2p(1+\delta)}$.
	
	Now we prove $  Z \in \mathcal{M}^{2p}. $ Assume that $ \delta >1 $. We use $ C $ to denote a generic positive constant throughout the proof and subsequent sections which may change from line to line. Define a function
	$$ v(y) := 
	\begin{cases}
	\dfrac{1}{\delta-1}\left(\dfrac{y^{1+\delta}-1}{\delta+1}- \dfrac{y^{2}-1}{2}\right),\quad  y > 1,\\
	\dfrac{y^{2}-1}{2}-(y-1),\quad\quad\quad\quad 0 < y \leq 1,	
	\end{cases} $$
	It is clear $ v(\cdot) \in \mathcal{C}^{2}((0,\,+\infty))$. Applying It\^o's formula to $ v(Y_{t})$ derives that 
	$$v\left(Y_{t}\right)= v(Y_{0})+\int_{0}^{t} \left(  v^{\prime}\left( Y_{r}\right)\left(- f\left(r,\, Y_{r},\, Z_{r}\right)\right)+\frac{1}{2} v^{\prime \prime}\left( Y_{r}\right)\lvert Z_{r}\rvert ^{2} \right)  d r  +\int_{0}^{t} v^{\prime}\left( Y_{r}\right) Z_{r} d W_{r}. 	$$
	By virtue of the Cauchy-Schwarz inequality and the BDG inequality, we can derive that 
	$$
	\mathbb{E}\left[\left(\int_{0}^{T} \lvert Z_{u}\rvert ^{2} d u\right)^{p}\right] \leq  C\mathbb{E}\left[1+\sup_{0 \le s \le T} (Y_{s})^{2p(1+\delta)}+\left(\int_{0}^{T}\left(a_{s} + b_{s} \right)d s \right)^{2p} +\left(\int_{0}^{T}\gamma^{2}_{t}d t \right)^{p(1+\delta)}\right]. 
	$$
	According to the integrability assumption, $  Z_{t} \in \mathcal{M}^{2p}. $
	The case of $ 0\le \delta < 1 $ can be proved similarly by taking $ v(y) =  \frac{y^{2}}{2(1-\delta)}$ for $y>0 $. This completes the proof of Theorem 1.   
\end{proof}

\begin{remark}\label{remark:2.1}
   We extend \sref{Theorem}{thm:exist1} to a more generalized BSDE \myeqref{eq1} with the generator satisfying that
	$ f $ is continuous in $ (y,\,z) $ for each fixed $ (s,\,\omega) $, and
	\begin{equation}\label{eqf2}
	0 \le f(s,\,\omega,\,y,\,z) \le a_{s}+b_{s}\phi(y)+\gamma_{s}|z|+\frac{|z|^{2}}{2\psi(y)} ,\,(s,\,\omega,\, y,\, z) \in [0,\, T] \times \Omega \times (0,\, +\infty) \times \mathbb{R}^{1 \times d},   
	\end{equation}
	where $a,\,b$, and $ \gamma $ are nonnegative and progressively measurable processes with $  a > 0  \text{ or } b > 0$. $ \phi:\left[0,\,\left.+\infty\right)\right.\mapsto \left[0,\,\left.+\infty\right)\right. $ is  convex, increasing and has continuous first derivative on $(0,\, +\infty).\,\phi(x)>0,\,\forall x>0.\, \psi:\left[0,\,\left.+\infty\right)\right. \mapsto \left[0,\,\left.+\infty\right)\right. $ satisfies $ \psi(0) = 0 $. $ \phi / \psi $ is increasing, and $ 1/\psi(x) $ is continuous on $ (0,\, +\infty) $.
	 For example, one can take the following functions $ \phi$ and $ \psi $:
	\begin{itemize}
		\item 	$   \phi (y) = \psi(y)  = \exp(y),\, y > 0; \phi (0) = \psi(0)  = 0.$
		\item   $ \phi (y) = \psi(y) = y^{2},\, y \ge 0. $
     	\item	$  \phi(y) = y,\,\psi(y) = \sqrt{y},\, y \ge 0. $
	\end{itemize}

    To present the existence results, we define the following auxiliary functions:
	$$
	\hat{u}(y):=\int_{1}^{y}\exp\left(\int_{1}^{x}\frac{dr}{\psi(r)}\right)\ dx,\  y > 0 .	$$
	$$
	\tilde{u}(y):=\int_{1}^{y}\exp\left(\int_{1}^{x}\frac{\mathds{1}_{ r>0 }}{\psi(r)}\ dr\right)\ dx,\  y > 0 .	$$
	$$ \hat{H}(y) := \int_{2}^{2+y} \dfrac{dr}{\phi(u^{-1}(r))u'(u^{-1}(r))} ,\, \ y>-2. $$
	$$K(x):=  \int_{0}^{x}\exp\left(-\int_{1}^{z} \frac{\mathds{1}_{ r>0 }}{\psi(r)}\ dr\right)\ dz,\ x \ge  0, $$ 
	$$ \hat{v}(y):=\int_{0}^{y} K(x) \exp\left(\int_{1}^{x}\frac{\mathds{1}_{ r>0 }}{\psi(r)}\ dr\right)\ dx,\  y >0.$$
	For $ p > 1,\, 0 \le s \le T $ and $ y \in (0,\, +\infty)$,
	$$
	\Phi(s,\, y; a_{\cdot},\, b_{\cdot},\, \gamma_{\cdot}):=  \exp\Big\{\frac{p}{2(p-1)}\int_{0}^{s} \gamma_{r}^{2} d s\Big\}\Big(\hat{H}^{-1}\Big(\hat{H}\Big(\left( \hat{u}\left( y \right)\right)^{+} \Big) + \int_{0}^{s}\Big(\frac{a_{r}}{\phi(1)} + b_{r}\Big)  dr\Big)\Big)^{p}.$$
	$$X_{t}:= \hat{u}^{-1}\Big(\Big\{ \mathbb{E}\big[\Phi(T,\, \xi;\, a_{\cdot},\, b_{\cdot},\, \gamma_{\cdot}) 	\mid \mathcal{F}_{t} \big] \Big\}^{1/p} \Big).
	$$
	We conclude that
	\begin{itemize}
		\item	If $ \xi > 0 $ and $\mathbb{E}\big[  \Phi^{k}(T,\, \xi;\, a_{\cdot},\, b_{\cdot},\, \gamma_{\cdot})  \big]<+\infty$ for some $ k > 1$, then BSDE \myeqref{eq1} has a solution $ (Y,\,Z) $ satisfying $ \hat{u}(Y) \in \mathcal{S}^{kp}$;
		\item	If $ \xi > 0 $, $\left(\mathds{1}_{ x>0 }/\psi(x) \right) _{x \in \mathbb{R}} $ is locally integrable on $   \mathbb{R}$, $\mathbb{E}\bigg[  \left(\int_{0}^{T}\gamma_{t}^{2} dt\right) ^{p} \bigg]<+\infty$, $ \mathbb{E}\bigg[  \phi^{p}\left(\hat{X}^{\ast}\right)  \bigg]<+\infty$, and $\mathbb{E}\bigg[  \big(\hat{X}^{\ast}\hat{u}'(\hat{X}^{\ast})\big) ^{2p} \bigg]<+\infty $, then BSDE \myeqref{eq1} has a solution $ (Y,\,Z) $ satisfying $ \hat{u}(Y) \in \mathcal{S}^{2p}$ and $ Z \in \mathcal{M}^{p}$.
	\end{itemize}	
	These results can be verified using the same method as in \sref{Theorem}{thm:exist1}. It is worth noting that we have checked that $ \hat{u}(\cdot) \in  \mathcal{C}^{1}((0,\,+\infty) ) \cap \mathcal{W}^{2}_{1,\, loc}( (0,\,+\infty)) $, which is critical for applying It\^o-Krylov's formula. The functions $ \hat{u}^{-1}(\cdot),\,\tilde{u}(\cdot),\,\tilde{u}^{-1}(\cdot),\,\hat{H}(\cdot),\,\hat{H}^{-1}(\cdot) $, and $ \hat{v}(\cdot)$ also satisfy this property.
\end{remark}

\section{Comparison Theorem}\label{sec3}

To obtain the uniqueness result, we assume a convexity condition in $ y,\, z $ so as to exploit $ \theta$-technique proposed by \citep{BriandHu2008} and proved to be convenient to deal with quadratic generators.

\begin{theorem}\label{thm:compa}
	(Comparison Theorem)
	Let $(Y^{1},\,Z^{1})$ and $ (Y^{2},\, Z^{2}) $ be solutions to BSDE($ \xi^{1},\, f^{1} $) and BSDE($ \xi^{2},\, f^{2} $), respectively, such that both $ Y^{1}$ and $Y^{2} $ belong to $ \mathcal{S}^{2p(2+\delta)} (0\le \delta \neq 1,\,p>1 )$.
	
	Assume $ d\mathbb{P} \otimes du\text{-a.s.},\, f^{1}(u,\, Y^{2}_{u},\, Z^{2}_{u}) \leq f^{2}(u,\, Y^{2}_{u},\, Z^{2}_{u})   $
	and	\as, $0 <\xi^{1} \leq \xi^{2}$. If  $ f^{1}(t,\, \cdot,\, \cdot)$ is convex for $\forall\, 0 \le t \le T$ and 
	for any $ (s,\,\omega,\,y,\, z) \in [0,\, T]\times \Omega \times (0,\, +\infty) \times  \mathbb{R}^{1 \times d}, $
$$		0 \leq f^{1}(s,\,\omega,\,y,\, z) \leq a_{s}+b_{s} y+\gamma_{s} |z|+\frac{\delta}{2y}|z|^{2},  	$$
	where $a,\,b$, and $ \gamma $ are nonnegative and progressively measurable processes satisfying
	$$\mathbb{E}\left[\exp\left\{\frac{p}{p-1}\int_{0}^{T} \gamma_{s}^{2} d s+ 2p(2+\delta)\int_{0}^{T}\left(a_{t}+b_{t}\right)d t \right\}\right]<+\infty, $$
	then \as, $\forall\, 0 \le t \le T,\, Y^{1}_{t} \leq Y_{t}^{2}$.
\end{theorem}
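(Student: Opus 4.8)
The plan is to run the $\theta$-technique of \citep{BriandHu2008}, exploiting the convexity of $f^{1}$ together with the one-sided bound $f^{1}(u,Y^{2}_{u},Z^{2}_{u})\le f^{2}(u,Y^{2}_{u},Z^{2}_{u})$. Since $f^{1}$ is defined only for $y\in(0,+\infty)$, the standing hypothesis $f^{1}(u,Y^{2}_{u},Z^{2}_{u})\le f^{2}(u,Y^{2}_{u},Z^{2}_{u})$ presupposes $Y^{2}>0$; together with $f^{1}\ge 0$ and $\xi^{1}>0$ this also gives $Y^{1}_{t}\ge\mathbb{E}_{t}[\xi^{1}]>0$. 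For $\theta\in(0,1)$ I would then introduce
$$U^{\theta}:=\frac{Y^{1}-\theta Y^{2}}{1-\theta},\qquad V^{\theta}:=\frac{Z^{1}-\theta Z^{2}}{1-\theta},$$
which solve the BSDE with terminal value $(\xi^{1}-\theta\xi^{2})/(1-\theta)$ and generator $g^{\theta}_{s}=\big(f^{1}(s,Y^{1}_{s},Z^{1}_{s})-\theta f^{2}(s,Y^{2}_{s},Z^{2}_{s})\big)/(1-\theta)$. Two algebraic facts drive the argument: $\xi^{1}\le\xi^{2}$ forces $U^{\theta}_{T}\le\xi^{2}$ uniformly in $\theta$, and $U^{\theta}-Y^{2}=(Y^{1}-Y^{2})/(1-\theta)$.

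The core estimate comes from convexity. On the set $\{U^{\theta}_{s}>0\}$ the identity $(Y^{1},Z^{1})=\theta(Y^{2},Z^{2})+(1-\theta)(U^{\theta},V^{\theta})$ is a genuine convex combination of points in $(0,+\infty)\times\mathbb{R}^{1\times d}$, so convexity of $f^{1}$ and the hypothesis $f^{1}(Y^{2},Z^{2})\le f^{2}(Y^{2},Z^{2})$ yield
$$g^{\theta}_{s}\le f^{1}(s,U^{\theta}_{s},V^{\theta}_{s})\le a_{s}+b_{s}U^{\theta}_{s}+\gamma_{s}|V^{\theta}_{s}|+\frac{\delta}{2U^{\theta}_{s}}|V^{\theta}_{s}|^{2}.$$
On the complementary set $\{U^{\theta}_{s}\le 0\}$ one has $Y^{1}_{s}\le\theta Y^{2}_{s}\le Y^{2}_{s}$, so comparison is automatic there and only the positive region must be controlled. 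To localize, I would fix $t$, assume $U^{\theta}_{t}>0$ (otherwise nothing is to be shown), and introduce the hitting time $\rho:=\inf\{s\ge t:U^{\theta}_{s}\le 0\}\wedge T$, so that $U^{\theta}>0$ on $[t,\rho)$ while in every case $U^{\theta}_{\rho}\le\xi^{2}$ (equal to $0$ if $\rho<T$, and $\le\xi^{2}$ if $\rho=T$).

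On $[t,\rho]$ I would reproduce the chain of transformations from the proof of \sref{Theorem}{thm:exist1} (including the role of $\delta\neq 1$ and the $\mathcal{M}^{q}$-integrability of $Z^{1},Z^{2}$): applying It\^o--Krylov with $u(y)=(y^{1+\delta}-1)/(1+\delta)$ cancels the singular quadratic term and casts the generator bound into the form of \myeqref{eq6}; composing successively with $H$, taking $p$-th powers with Young's inequality, and finally solving the resulting linear BSDE produces an explicit upper bound
$$U^{\theta}_{t}\le u^{-1}\Big(\big(\Lambda_{t}\big)^{1/p}\Big),\qquad \Lambda_{t}=\mathbb{E}_{t}\Big[\exp\Big\{\tfrac{p}{2(p-1)}\int_{t}^{T}\gamma_{s}^{2}\,ds\Big\}\Big(H^{-1}\big(H((u(\xi^{2}))^{+})+\int_{0}^{T}(a_{s}+b_{s})\,ds\big)\Big)^{p}\Big],$$
where the terminal data have been majorized using the monotonicity of $u$ and $U^{\theta}_{\rho}\le\xi^{2}$, and $\int_{t}^{\rho}\gamma^{2}\le\int_{t}^{T}\gamma^{2}$. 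The decisive point is that $\Lambda_{t}$ no longer depends on $\theta$, and the exponential-moment hypothesis (with exponent $2p(2+\delta)$, matching $Y^{1},Y^{2}\in\mathcal{S}^{2p(2+\delta)}$) guarantees that $\Gamma_{t}:=u^{-1}((\Lambda_{t})^{1/p})$ is a.s.\ finite and dominates the whole family $\{U^{\theta}_{t}\}_{\theta}$.

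The conclusion follows by letting $\theta\uparrow 1$. Combining the two regimes, for each $t$ one has $U^{\theta}_{t}\le\Gamma_{t}$ on $\{U^{\theta}_{t}>0\}$ and $Y^{1}_{t}\le\theta Y^{2}_{t}$ on $\{U^{\theta}_{t}\le 0\}$; using $U^{\theta}_{t}-Y^{2}_{t}=(Y^{1}_{t}-Y^{2}_{t})/(1-\theta)$, on the event $\{Y^{1}_{t}>Y^{2}_{t}\}$ the left-hand side tends to $+\infty$ as $\theta\uparrow 1$ along a countable sequence while $\Gamma_{t}-Y^{2}_{t}$ stays finite, forcing that event to be null. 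Hence $Y^{1}_{t}\le Y^{2}_{t}$ \as\ for every $t$. I expect the main obstacle to be precisely the domain/positivity bookkeeping: the singular term $\tfrac{\delta}{2U^{\theta}}|V^{\theta}|^{2}$ and the transform $u$ are meaningful only where $U^{\theta}>0$, whereas $U^{\theta}$ is genuinely sign-changing near $T$, so the hitting-time localization and the verification that the resulting bound is uniform in $\theta$ and integrable under the stated moment condition are where the real care lies; propagating the exponent $2p(2+\delta)$ cleanly through the successive transforms is the accompanying technical nuisance.
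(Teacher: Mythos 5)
Your proposal is correct and rests on the same engine as the paper's proof: the $\theta$-technique of Briand--Hu, the convexity inequality $f^{1}(Y^{1},Z^{1})-\theta f^{2}(Y^{2},Z^{2})\le(1-\theta)f^{1}(U^{\theta},V^{\theta})$ on $\{U^{\theta}>0\}$, the change-of-variable chain that cancels the singular quadratic term and the $a,b,\gamma$ terms, a $\theta$-free conditional-expectation bound, and the limit $\theta\uparrow 1$. Where you diverge is the treatment of the sign-changing region. The paper never localizes in space: it applies Tanaka's formula to $(\Delta_{\theta}Y)^{+}$ and then transforms with the power $2+\delta$ (via $R_{t}=((\Delta_{\theta}Y_{t})^{+})^{2+\delta}/(2+\delta)$, the function $h$, and Lemma 3.1 of Yang), the point being that the composite map has vanishing derivative at zero, so the local time and the indicator cause no trouble on all of $[0,T]$; this is also exactly why the exponent $2p(2+\delta)$ appears in the statement. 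You instead keep the exponent $1+\delta$ of the existence proof and confine the analysis to $\{U^{\theta}>0\}$ by a hitting time $\rho$. That route works, and is arguably more elementary (no positive-part calculus, no external lemma), but it pushes the difficulty into the boundary: for $\delta<1$ one has $u''(y)=\delta y^{\delta-1}\to\infty$ as $y\downarrow 0$, so It\^o's formula must be run on $[t,\rho_{\varepsilon}]$ with $\rho_{\varepsilon}=\inf\{s\ge t:U^{\theta}_{s}\le\varepsilon\}\wedge T$ and then $\varepsilon\downarrow 0$, together with a dominated-convergence justification of the localized submartingale inequality (available from $Y^{i}\in\mathcal{S}^{2p(2+\delta)}$ and the exponential moment hypothesis via H\"older) --- precisely the bookkeeping you flag. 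Minor immaterial difference: you majorize the stopped terminal value by $\xi^{2}$, the paper by $\xi^{1}$ (both are $\theta$-free, using $\xi^{1}\le\xi^{2}$).
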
  

\begin{proof}[Proof.]
	For any given $\theta \in (0,\, 1) $, set
	$$
	\Delta_{\theta}Y_{t}:=\frac{Y^{1}_{t}-\theta Y_{t}^{2}}{1-\theta}, \  \Delta_{\theta} Z_{t} := \frac{Z^{1}_{t}-\theta Z^{2}_{t}}{1-\theta}  \text{, and }   \Delta_{\theta}\xi := \frac{\xi^{1}-\theta \xi^{2}}{1-\theta}.
	$$ 
	Suppose $ L^{0}_{\cdot}(\Delta_{\theta}Y) $ represents the local time of $ \Delta_{\theta}Y $ at $ 0 $. We can acquire via Tanaka's formula that
	$$\begin{aligned}
	\left(\Delta_{\theta}Y_{t}\right)^{+} = &\left(\Delta_{\theta}\xi\right)^{+} + \int_{t}^{T} \mathds{1}_{ \Delta_{\theta}Y_{u}>0 }\frac{1}{1-\theta}\Big[f^{1}\left(u,\, Y^{1}_{u},\, Z^{1}_{u}\right)-\theta f^{2}\left(u,\, Y_{u}^{2},\, Z_{u}^{2}\right)\Big]du \\-& \int_{t}^{T} \mathds{1}_{ \Delta_{\theta}Y_{u}>0 }\Delta_{\theta} Z_{u}d W_{u}- \frac{1}{2}\int_{t}^{T}dL^{0}_{u}(\Delta_{\theta}Y),
	\end{aligned}$$ 
	where
	$$\begin{aligned}&\mathds{1}_{ \Delta_{\theta}Y_{u}>0 }\frac{1}{1-\theta}\Big[f^{1}\left(u,\, Y^{1}_{u},\, Z^{1}_{u}\right)-\theta f^{2}\left(u,\, Y_{u}^{2},\, Z_{u}^{2}\right)\Big]
	\\ =& \mathds{1}_{ \Delta_{\theta}Y_{u}>0 }\frac{1}{1-\theta}\left[f^{1}\left(u,\, Y^{1}_{u},\, Z^{1}_{u}\right)-\theta f^{1}\left(u,\, Y_{u}^{2},\, Z_{u}^{2}\right)+\theta f^{1}\left(u,\, Y_{u}^{2},\, Z_{u}^{2}\right)-\theta f^{2}\left(u,\, Y_{u}^{2},\, Z_{u}^{2}\right)\right] \\
	\leq & \mathds{1}_{ \Delta_{\theta}Y_{u}>0 }\Big[a_{u}+b_{u}\Delta_{\theta}Y_{u}+\gamma_{u}|\Delta_{\theta}Z_{u}|+\frac{\delta|\Delta_{\theta}Z_{u}|^{2}}{2\Delta_{\theta}Y_{u}} \Big].
	\end{aligned}$$
	
	Let $h(y):= \int_{2}^{2+y} \dfrac{dr}{(2+\delta)r+1} ,\, \ y>-2$. For ease of notation, let us denote
	$$\begin{aligned} P_{t}^{0}:=& h^{-1}\left(h\left(\frac{\left(\left(\Delta_{\theta}Y_{t}\right)^{+}\right)^{2+\delta}}{2+\delta}\right) + \int_{0}^{t}\left(a_{s} + b_{s}\right)  ds\right),  \\
	G_{t}:=&\left(\exp\left\{\frac{1}{2(p-1)}\int_{0}^{t} \gamma_{s}^{2} d s\right\}P^{0}_{t}  \right)^{p},\\
	R_{t}:=&\exp\left\{\frac{p}{2(p-1)}\int_{0}^{t} \gamma_{s}^{2} d s\right\}   p\left(P^{0}_{t} \right)^{p-1} \dfrac{\left[(2+\delta)\left(2+P^{0}_{t}\right)+1 \right] \left(\left(\Delta_{\theta}Y_{t}\right)^{+}\right)^{1+\delta}\mathds{1}_{\Delta_{\theta}Y_{t}>0}}{(2+\delta)\left(2+\frac{\left(\left(\Delta_{\theta}Y_{t}\right)^{+}\right)^{2+\delta}}{2+\delta}\right)+1}
	\Delta_{\theta} Z_{t}.
	\end{aligned} $$
	By applying It\^o's formula to $ P_{t}^{0} $ and using Lemma 3.1 in \citep{Yang2017}, we deduce that
	$$ G_{t}\leq  G_{T} -\int_{t}^{T} R_{s}d W_{s}.
	$$
	Let us define $ \sigma_{n}^{t} :=\inf\left\{ s > t: \int_{t}^{s}|R_{u}|^{2}du>n\right\}\wedge T,\,t \in [0,\, T],\,n \in \mathbb{N}. $ Therefore, $ G_{t}\leq \mathbb{E}\left[ G_{\sigma_{n}^{t}} \middle | \mathcal{F}_{t}  \right].	$
	Noting that $\left(\Delta_{\theta}\xi\right)^{+} \leq \xi^{1}  $, the dominated convergence theorem gives
	$$\left(\tfrac{\left( Y^{1}_{t}-\theta Y^{2}_{t}\right)^{+}}{1 - \theta} \right)^{p(2+\delta)}\!\! \leq  C \mathbb{E}_{t}\left[\exp\left\{\tfrac{p}{2(p-1)}\int_{0}^{T} \gamma_{s}^{2} d s+ p(2+\delta)\int_{0}^{T}\left(a_{t}+b_{t}\right)d t \right\}\left(\xi^{1}+1\right)^{p(2+\delta)}  \right] .
	$$
	Moving $ (1-\theta) $ to the right-hand side and sending $ \theta \to 1$, we complete the proof. \end{proof} 

\begin{Corollary}\label{corunique}
	There is only one solution $ (Y,\,Z)  \in \mathcal{S}^{2p(2+\delta)} \times \mathcal{M}^{2p}$ for BSDE$ (\xi,\, f) $, if $\xi >0,\, f(t,\, \cdot,\, \cdot)$ is convex, $\forall 0 \le t \le T$ and  for any $ (s,\,\omega,\,y,\, z) \in [0,\, T]\times \Omega \times (0,\, +\infty) \times  \mathbb{R}^{1 \times d}, $
	$$	0 \leq f(s,\,\omega,\,y,\, z) \leq a_{s}+b_{s} y+\gamma_{s} |z|+\frac{\delta}{2y}|z|^{2},  	$$
	where $ 0\le \delta \neq 1 $ is a constant, $a,\,b$, and $ \gamma $ are nonnegative and progressively measurable processes satisfying $ a > 0 $ or $ b > 0 $, and for some $ p >1 $,
	$$\mathbb{E}\left[\exp\left\{\frac{p}{p-1}\int_{0}^{T} \gamma_{s}^{2} d s+ 2p(2+\delta)\int_{0}^{T}\left(a_{t}+b_{t}\right)d t \right\}\Big( \xi^{2p(2+\delta)}+1 \Big)\right]<+\infty. $$
	More precisely,
	$$\begin{aligned}
	&\mathbb{E}\left[\sup_{0\leq t \leq T}Y_{t}^{2p(2+\delta)}  + \left(\int_{0}^{T} \lvert Z_{u}\rvert ^{2} d u\right)^{p} \right]  \\ \le  &C\mathbb{E}\left[\exp\left\{\frac{p}{p-1}\int_{0}^{T} \gamma_{s}^{2} d s+ 2p(2+\delta)\int_{0}^{T}\left(a_{t}+b_{t}\right)d t \right\}\Big( \xi^{2p(2+\delta)}+1 \Big)\right],   
	\end{aligned}$$
	where $ C $ is a constant depending on $ p$ and $\delta. $
\end{Corollary}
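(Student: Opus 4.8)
The plan is to obtain the Corollary by pairing the existence result of \sref{Theorem}{thm:exist1} with the Comparison Theorem \ref{thm:compa} for uniqueness, and then to read off the quantitative bound from the explicit estimates already constructed inside the proof of \sref{Theorem}{thm:exist1}. The one structural subtlety to keep in mind is that \sref{Theorem}{thm:compa} requires its solutions to lie in $\mathcal{S}^{2p(2+\delta)}$ (exponent $2+\delta$, not the $1+\delta$ delivered by the raw existence theorem), so the existence half must be run at a boosted integrability exponent in order to land exactly in this class.

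For existence, I would invoke \sref{Theorem}{thm:exist1} with the parameter $\tilde p:=p(2+\delta)/(1+\delta)$ in place of $p$. Since $\delta\ge 0$ we have $\tilde p>p>1$, and a direct computation shows $2(1+\delta)\tilde p=2p(2+\delta)$ and $2\tilde p(1+\delta)=2p(2+\delta)$, while $\tilde p/(\tilde p-1)\le p/(p-1)$ because $x\mapsto x/(x-1)$ is decreasing on $(1,\infty)$. Hence the integrability hypothesis of the Corollary dominates the hypothesis of \sref{Theorem}{thm:exist1} evaluated at $\tilde p$, so that theorem produces a solution with $Y\in\mathcal{S}^{2\tilde p(1+\delta)}=\mathcal{S}^{2p(2+\delta)}$; since $\delta\neq1$ it also gives $Z\in\mathcal{M}^{2\tilde p}\subseteq\mathcal{M}^{2p}$, the inclusion holding by Jensen's inequality on the finite-horizon probability space. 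I would also remark that convexity of $f(t,\cdot,\cdot)$ on the open convex set $(0,\infty)\times\mathbb{R}^{1\times d}$ automatically supplies the continuity in $(y,z)$ needed to apply \sref{Theorem}{thm:exist1}.

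For uniqueness, let $(Y^{1},Z^{1})$ and $(Y^{2},Z^{2})$ be any two solutions in $\mathcal{S}^{2p(2+\delta)}\times\mathcal{M}^{2p}$. Both solve BSDE$(\xi,f)$ with the same convex generator obeying the stated bound, so I would apply \sref{Theorem}{thm:compa} with $f^{1}=f^{2}=f$ and $\xi^{1}=\xi^{2}=\xi$: the hypothesis $f^{1}(u,Y^{2}_{u},Z^{2}_{u})\le f^{2}(u,Y^{2}_{u},Z^{2}_{u})$ holds trivially with equality, $0<\xi\le\xi$, and both $Y^{i}\in\mathcal{S}^{2p(2+\delta)}$, yielding $Y^{1}_{t}\le Y^{2}_{t}$. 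Exchanging the roles of the two solutions gives $Y^{2}_{t}\le Y^{1}_{t}$, hence $Y^{1}=Y^{2}=:Y$. To identify the control, subtract the two equations: with $Y^{1}=Y^{2}$ the process $M_{t}:=\int_{0}^{t}(Z^{1}_{u}-Z^{2}_{u})\,dW_{u}$ satisfies $M_{T}-M_{t}=\int_{t}^{T}[f(u,Y_{u},Z^{1}_{u})-f(u,Y_{u},Z^{2}_{u})]\,du$ for every $t$, so $M$ differs by a constant from a continuous finite-variation process. A continuous local martingale of finite variation is constant, whence $\langle M\rangle_{T}=\int_{0}^{T}|Z^{1}_{u}-Z^{2}_{u}|^{2}\,du=0$ and $Z^{1}=Z^{2}$ $d\mathbb{P}\otimes du$-a.e.

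Finally, the quantitative estimate is assembled from bounds already present in the proof of \sref{Theorem}{thm:exist1}. The domination $\mathbb{E}[\xi\mid\mathcal{F}_{t}]\le Y_{t}\le u^{-1}((Y^{5}_{t})^{1/\tilde p})$, the explicit formula for the martingale $Y^{5}$, Doob's maximal inequality and Jensen's inequality (run at level $\tilde p$, exactly as in that proof) control $\mathbb{E}[\sup_{0\le t\le T}Y_{t}^{2p(2+\delta)}]$ by the right-hand side of the claimed inequality. Then the It\^o--BDG computation of that proof, used at the Corollary's own level $p$, bounds $\mathbb{E}[(\int_{0}^{T}|Z_{u}|^{2}\,du)^{p}]$ by $\mathbb{E}[1+\sup_{t}Y_{t}^{2p(1+\delta)}+(\int_{0}^{T}(a_{s}+b_{s})\,ds)^{2p}+(\int_{0}^{T}\gamma_{s}^{2}\,ds)^{p(1+\delta)}]$; here $\sup_{t}Y_{t}^{2p(1+\delta)}\le 1+\sup_{t}Y_{t}^{2p(2+\delta)}$ is already controlled, and the two polynomial moments are dominated by the single exponential moment on the right. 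Summing the two bounds and absorbing the numerical factors into a constant $C=C(p,\delta)$ gives the asserted estimate. I expect the only genuinely delicate point to be this bookkeeping: matching the exponents so that \sref{Theorem}{thm:exist1} lands precisely in $\mathcal{S}^{2p(2+\delta)}$ and tracking the constants through Doob and BDG, since the uniqueness step and the reduction to the earlier theorems are otherwise mechanical.
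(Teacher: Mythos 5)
Your proposal is correct and follows the route the paper intends for this corollary, which is stated without a separate proof precisely because it is the combination of \sref{Theorem}{thm:exist1} (existence, with the estimates inside its proof) and \sref{Theorem}{thm:compa} (applied twice with $f^{1}=f^{2}=f$, $\xi^{1}=\xi^{2}=\xi$ for uniqueness of $Y$, the identification of $Z$ then following from the vanishing of the finite-variation local martingale). Your exponent bookkeeping $\tilde p=p(2+\delta)/(1+\delta)$, giving $2\tilde p(1+\delta)=2p(2+\delta)$ with $\tilde p/(\tilde p-1)\le p/(p-1)$, is exactly the adjustment needed to make the existence theorem land in $\mathcal{S}^{2p(2+\delta)}\times\mathcal{M}^{2p}$ under the corollary's integrability hypothesis, and the derivation of the quantitative bound from Doob, Jensen and the It\^o--BDG computation matches the paper's argument.
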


\begin{remark}
	For more generalized BSDE in \sref{Remark}{remark:2.1}, a similar comparison theorem holds. In fact, let $(Y^{1},\,Z^{1})$ and $ (Y^{2},\, Z^{2}) $ be solutions to BSDE($ \xi^{1},\, f^{1} $) and BSDE($ \xi^{2},\, f^{2} $), respectively, fulfilling for given $ p > 1 ,$
	$$ \forall \lambda > 0,\, \mathbb{E}\left[\sup_{0\leq t \leq T}   \Phi(t,\, \lambda Y_{t}^{i};\, a_{\cdot},\, b_{\cdot},\, \gamma_{\cdot})  \right] < +\infty,\, i=1,\,2. $$	
	Assume $ d\mathbb{P} \otimes du\text{-a.s.},\, f^{1}(u,\, Y^{2}_{u},\, Z^{2}_{u}) \leq f^{2}(u,\, Y^{2}_{u},\, Z^{2}_{u})$, and	\as, $0 < \xi^{1} \leq \xi^{2}$. If  $ f^{1}$ satisfies \myeqref{eqf2}, $\left(\mathds{1}_{ x>0 }/\psi(x) \right) _{x \in \mathbb{R}} $ is locally integrable on $   \mathbb{R}$, and $ f^{1}$ is convex in $(y,z)$ for all $ t \in [0,\,T]$, then \as, $\forall\, 0 \le t \le T,\, Y^{1}_{t} \leq Y_{t}^{2}$.
	
	Moreover, if BSDE($ \xi,\, f $) satisfies \myeqref{eqf2}, $\left(\mathds{1}_{ x>0 }/\psi(x) \right) _{x \in \mathbb{R}} $ is locally integrable on $   \mathbb{R}$, $ f$ is convex in $(y,z)$ for all $ t \in [0,\,T]$, for some $ p > 1$, 
	$$\mathbb{E}\left[ \sup_{0 \le t \le T} \phi^{p/2}\left(X_{t} \right)  \right] <+\infty, $$	 
	and for any $ \lambda \ge 1 $, $$\mathbb{E}\left[ \sup_{0 \le t \le T} \exp\left\{\frac{p}{2(p-1)}\int_{0}^{t} \gamma_{s}^{2} d s\right\}  \Big(\hat{H}^{-1}\Big(\hat{H}\big(\lambda X_{t} \hat{u}'\left(\lambda X_{t} \right) \big) +   \int_{0}^{t}\Big(\frac{a_{s}}{\phi(1)} \!+\!b_{s}\Big)  ds\Big)\Big)^{p}  \right] <\infty,$$
	then BSDE($ \xi,\, f $) has a unique solution $ (Y,\,Z) $ in $ \mathcal{S}^{p} \times \mathcal{M}^{p/2} $. 	
\end{remark}

\begin{Corollary}\label{cor:bmo}
	(BMO property) 	
	Assume that BSDE($ \xi,\, f $) satisfies that $\xi >0 $ is bounded, \myeqref{eq3} holds with $0\le \delta \neq 1,\,\int_{0}^{T} a_{s} d s,\, \int_{0}^{T} b_{s} d s$ and $ \int_{0}^{T}\gamma_{s}^{2}d s$ are bounded. If $ f(t,\, \cdot,\, \cdot)$ is convex for $\forall\, 0 \le t \le T$, then there is only one solution $(Y,\,Z) $ for BSDE($ \xi,\, f $) fulfilling that $ Y $ is bounded and $ Z  \circ W \in BMO$. 
\end{Corollary}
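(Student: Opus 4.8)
The plan is to read off existence and the boundedness of $Y$ from \sref{Theorem}{thm:exist1}, to get uniqueness from \sref{Corollary}{corunique}, and to concentrate the real work on the $BMO$ estimate for $Z$. Since $\xi$ is bounded and $\int_0^T a_s\,ds,\ \int_0^T b_s\,ds,\ \int_0^T\gamma_s^2\,ds$ are bounded, the exponential-moment hypothesis of \sref{Theorem}{thm:exist1} holds for every $p>1$; hence a solution $(Y,Z)$ exists with $Z\in\mathcal{M}^{2p}$, and the explicit bounds in that proof give $\mathbb{E}_t[\xi]\le Y_t\le u^{-1}\big((Y_t^5)^{1/p}\big)$. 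Because $Y_t^5$ is the conditional expectation of a bounded random variable (all of $u(\xi)$, $\int_0^T(a_s+b_s)\,ds$ and $\int_0^T\gamma_s^2\,ds$ being bounded), $Y$ is bounded above, and together with $Y_t\ge\mathbb{E}_t[\xi]>0$ this yields $0<Y_t\le M$ for a deterministic $M$. In particular $Y\in\mathcal{S}^{q}$ for every $q$, so the integrability requirement of \sref{Corollary}{corunique} is satisfied and, $f$ being convex, that corollary forces at most one solution in $\mathcal{S}^{2p(2+\delta)}\times\mathcal{M}^{2p}$. Any solution with $Y$ bounded and $Z\circ W\in BMO$ lies in this class (a $BMO$ martingale has moments of all orders, so $Z\in\mathcal{M}^{2p}$), whence uniqueness will follow once existence and the $BMO$ property are in hand.

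For the $BMO$ estimate I would apply It\^o's formula to $v(Y_t)$, with $v$ the very function used in \sref{Theorem}{thm:exist1}: namely $v(y)=\frac{y^2}{2(1-\delta)}$ when $0\le\delta<1$, and the piecewise function $v(y)=\frac{y^2}{2}-y+\frac12$ on $(0,1]$, $v(y)=\frac{1}{\delta-1}\big(\frac{y^{1+\delta}-1}{\delta+1}-\frac{y^2-1}{2}\big)$ on $(1,\infty)$ when $\delta>1$. The crucial point, checked by a direct computation, is that in both cases
$$\frac{\delta}{2y}\,v'(y)-\frac12 v''(y)=-\frac12,$$
so that inserting the singular bound \myeqref{eq3} into the drift $v'(Y)f-\frac12 v''(Y)|Z|^2$ cancels the singular quadratic term exactly and leaves a genuine $-\frac12|Z|^2$. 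On $\{Y>1\}$ one has $v'(Y)\ge0$ and uses the upper bound in \myeqref{eq3}, while on $\{0<Y\le1\}$ one has $v'(Y)\le0$ and uses instead $f\ge0$, so that in either region
$$v'(Y_s)f(s,Y_s,Z_s)-\tfrac12 v''(Y_s)|Z_s|^2\le C\big(a_s+b_sY_s+\gamma_s|Z_s|\big)-\tfrac12|Z_s|^2,$$
where $C$ depends only on $\delta$ and $M$, and where $v(Y)$ and $v'(Y)$ are bounded because $Y$ is bounded and $v,v'$ extend continuously to $0$.

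Finally I would rearrange this It\^o identity over $[\tau,T]$ for an arbitrary $\tau\in\mathcal{T}_{0,T}$, split $C\gamma_s|Z_s|\le\frac14|Z_s|^2+C^2\gamma_s^2$ by Young's inequality and absorb the $\frac14|Z_s|^2$ on the left, obtaining
$$\tfrac14\int_\tau^T|Z_s|^2\,ds\le v(Y_T)-v(Y_\tau)+C\int_\tau^T(a_s+b_sY_s)\,ds+C^2\int_\tau^T\gamma_s^2\,ds-\int_\tau^T v'(Y_s)Z_s\,dW_s.$$
Since $v'(Y)$ is bounded and $Z\in\mathcal{M}^2$, the stochastic integral is a true martingale, so taking $\mathbb{E}[\cdot\mid\mathcal{F}_\tau]$ annihilates it and leaves $\frac14\,\mathbb{E}_\tau\big[\int_\tau^T|Z_s|^2\,ds\big]\le C'$ with $C'$ deterministic (all three remaining bounds being deterministic). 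Taking the supremum over $\tau$ gives $\sup_\tau\big\lVert\mathbb{E}_\tau[\int_\tau^T|Z_s|^2\,ds]\big\rVert_\infty<\infty$, i.e. $Z\circ W\in BMO$, and uniqueness then follows as in the first paragraph. I expect the main obstacle to be exactly this $BMO$ step: because $\xi$ is only assumed positive, $Y$ may approach $0$ and the coefficient $\delta/(2Y)$ may blow up, so a naive application of It\^o to $Y^2$ fails when $\delta>1$; the remedy is the exact cancellation furnished by the tailored function $v$, which neutralises the singularity uniformly, no matter how small $Y$ becomes.
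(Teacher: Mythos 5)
Your proposal is correct and follows essentially the same route as the paper: existence, uniqueness and boundedness of $Y$ are drawn from \sref{Theorem}{thm:exist1} and \sref{Corollary}{corunique}, and the $BMO$ bound comes from It\^o's formula applied to the auxiliary function $v$ from the proof of \sref{Theorem}{thm:exist1}, followed by conditional expectation over $[\tau,T]$. Your write-up is in fact more detailed than the paper's one-line sketch (you justify the martingale property of $\int v'(Y)Z\,dW$ directly via $Z\in\mathcal{M}^{2p}$ where the paper invokes localization and dominated convergence), and the only blemish is cosmetic: the identity $\frac{\delta}{2y}v'(y)-\frac12 v''(y)=-\frac12$ fails on $(0,1]$ when $\delta>1$, but this is harmless since there you correctly use $v'\le 0$ and $f\ge 0$ instead.
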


\begin{proof}[Proof.]
	\sref{Corollary}{corunique} provides clear evidence of the existence, uniqueness of solution $(Y,\,Z) $ together with the boundedness of $ Y_{t} $. 
	Applying It\^o's formula to $ v(Y_{t}) $, where $ v(\cdot) $ defined in the proof of \sref{Theorem}{thm:exist1} and then taking conditional expectation, the argument $ Z \circ W \in BMO $ can be easily proved by localization technique and dominated convergence theorem.
\end{proof}
\begin{remark}
	It is clear that for more generalized BSDE in \sref{Remark}{remark:2.1}, \sref{Corollary}{cor:bmo} still holds.
\end{remark}

\section{Stability Result and Application to PDEs}\label{sec4}
\subsection{Stability Result}

The following result establishes a stability result for the solution to BSDE$ (\xi,\, f)$. To be more detailed, suppose that $ f $ is a generator satisfying \myeqref{eq3}  with $ a_{\cdot},\, b_{\cdot},\, \gamma_{\cdot}$ and $ f(t,\, \cdot,\, \cdot)$ is convex for $\forall\, 0 \le t \le T$. For each $ n \ge 1,\, f_{n} $ is a generator fulfilling \myeqref{eq3} with $(a^{n}_{\cdot},\, b^{n}_{\cdot},\, \gamma^{n}_{\cdot})$ and $ f_{n}(t,\, \cdot,\, \cdot)$ is convex for $\forall\, 0 \le t \le T$. $ \delta \ge 0 $ is the constant in \myeqref{eq3}. For simplicity of representation, we denote
$$A_{t}:= \exp\left\{\frac{p}{p-1}\int_{0}^{t} \gamma_{s}^{2} d s+ p(2+\delta)\int_{0}^{t}\left(a_{s}+b_{s}\right)d s \right\},$$ and
$$A^{n}_{t}:= \exp\left\{\frac{p}{p-1}\int_{0}^{t} \left(\gamma^{n}_{s}\right)^{2} d s+ p(2+\delta)\int_{0}^{t}\left(a^{n}_{s}+b^{n}_{s}\right)d s \right\}.$$
Assume that $ \xi $ and $ (\xi_{n})_{n \ge 1} $ are strictly positive
terminal conditions such that  for some $ p > 1, $
\begin{equation}\label{eq13}
\mathbb{E}\left[A^{4}_{T}\Big( \xi^{2+\delta}+1 \Big)^{4p}\right]+ \sup_{n \ge 1}\mathbb{E}\left[(A^{n}_{T})^{4}\Big( \left( \xi_{n}\right)^{2+\delta}+1 \Big)^{4p}\right]<+\infty.
\end{equation}
Owing to \sref{Theorem}{thm:exist1}, let $ (Y,\,Z) \in \mathcal{S}^{4p(2+\delta)} \times \mathcal{M}^{4p}$ solve BSDE($ \xi,\, f $) and $ (Y^{n},\, Z^{n}) \in \mathcal{S}^{4p(2+\delta)} \times \mathcal{M}^{4p}$ solve BSDE($ \xi_{n},\, f_{n} $) for each $n \ge 1$. 
\begin{theorem}	(Stability)\label{stability}
	If $ \xi_{n} \to \xi $  in $ L^{2p(\delta + 2)}$ and $ \int_{0}^{T} \lvert f_{n}(r,\, Y_{r},\, Z_{r})-f(r,\, Y_{r},\, Z_{r}) \rvert \ d r \to 0$ in $ L^{2p(\delta + 2)}$ as $ n \to \infty, $ then $Y^{n}$ converges to $ Y $ in  $ \mathcal{S}^{4p} $. When $ 0 \le \delta <1 $, $ Z^{n} $ converges to $ Z $ in $ \mathcal{M}^{2p} $.
	
\end{theorem}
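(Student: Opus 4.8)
The plan is to control $Y^{n}-Y$ by the same $\theta$-technique that drives \sref{Theorem}{thm:compa}, now carrying an extra additive source term that records the mismatch of the generators, and then to recover $Z^{n}-Z$ from an It\^o expansion of the squared difference. Since convergence in $\mathcal{S}^{4p}$ amounts to $\mathbb{E}[\sup_{t}((Y^{n}_{t}-Y_{t})^{+})^{4p}]\to 0$ together with the symmetric statement for $(Y_{t}-Y^{n}_{t})^{+}$, I would first prove the two one-sided estimates separately. Throughout write $\rho^{n}_{u}:=f_{n}(u,\,Y_{u},\,Z_{u})-f(u,\,Y_{u},\,Z_{u})$, so that the hypothesis reads $\int_{0}^{T}|\rho^{n}_{u}|\,du\to 0$ in $L^{2p(\delta+2)}$, and recall that \sref{Theorem}{thm:exist1} together with \myeqref{eq13} provide uniform a priori bounds $Y,\,Y^{n}\in\mathcal{S}^{4p(2+\delta)}$ and $Z,\,Z^{n}\in\mathcal{M}^{4p}$.

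For the upper estimate, fix $\theta\in(0,\,1)$ and set $\Delta_{\theta}Y^{n}:=(Y^{n}-\theta Y)/(1-\theta)$, $\Delta_{\theta}Z^{n}:=(Z^{n}-\theta Z)/(1-\theta)$, $\Delta_{\theta}\xi_{n}:=(\xi_{n}-\theta\xi)/(1-\theta)$. Applying Tanaka's formula to $(\Delta_{\theta}Y^{n})^{+}$ and using convexity of $f_{n}$ in $(y,\,z)$, namely $f_{n}(u,\,Y^{n},\,Z^{n})\le\theta f_{n}(u,\,Y,\,Z)+(1-\theta)f_{n}(u,\,\Delta_{\theta}Y^{n},\,\Delta_{\theta}Z^{n})$, I would bound the drift on $\{\Delta_{\theta}Y^{n}>0\}$ by
$$a^{n}_{u}+b^{n}_{u}\Delta_{\theta}Y^{n}_{u}+\gamma^{n}_{u}|\Delta_{\theta}Z^{n}_{u}|+\frac{\delta|\Delta_{\theta}Z^{n}_{u}|^{2}}{2\Delta_{\theta}Y^{n}_{u}}+\frac{\theta}{1-\theta}\rho^{n}_{u},$$
which is exactly the bound from the proof of \sref{Theorem}{thm:compa} plus the pure-drift perturbation $\tfrac{\theta}{1-\theta}\rho^{n}_{u}$. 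I would then run the transformation chain $h,\,P^{0}_{t},\,G_{t},\,R_{t}$ verbatim; because the perturbation carries no $y$- or $z$-dependence, it propagates additively through the resulting linear dominating equation, so that after taking conditional expectation and rescaling, $(Y^{n}_{t}-\theta Y_{t})^{+}$ is dominated by a conditional expectation of the exponential weight of \myeqref{eq13} against $(\xi_{n}-\theta\xi)^{+}+\theta\int_{0}^{T}|\rho^{n}_{u}|\,du$. Letting $\theta\to1$ turns the terminal increment into $(\xi_{n}-\xi)^{+}$ and leaves the source contribution $\int_{0}^{T}|\rho^{n}_{u}|\,du$. The symmetric argument, exchanging the roles of $Y$ and $Y^{n}$ and using convexity of $f$, yields the matching bound for $(Y_{t}-Y^{n}_{t})^{+}$.

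To pass from the pointwise-in-$t$ bounds to $\mathcal{S}^{4p}$, I would apply Doob's maximal inequality and then split the resulting expectation by H\"older's inequality into the exponential weight, whose fourth moment is finite by \myeqref{eq13}, and the increments $(\xi_{n}-\xi)^{+}$ and $\int_{0}^{T}|\rho^{n}_{u}|\,du$, which tend to $0$ in $L^{2p(\delta+2)}$. The room between the exponents (the fourth powers in \myeqref{eq13} against the $2p(\delta+2)$-convergence) is precisely what is needed to conclude $\mathbb{E}[\sup_{t}|Y^{n}_{t}-Y_{t}|^{4p}]\to 0$. For the statement on $Z$ when $0\le\delta<1$, I would apply It\^o's formula to $|Y^{n}_{t}-Y_{t}|^{2}$, isolate $\int_{t}^{T}|Z^{n}_{u}-Z_{u}|^{2}\,du$, and estimate the drift $\int_{t}^{T}(Y^{n}_{u}-Y_{u})\big(f_{n}(u,\,Y^{n},\,Z^{n})-f(u,\,Y,\,Z)\big)\,du$ together with the martingale part by BDG and H\"older, using the already established $\mathcal{S}^{4p}$ convergence of $Y^{n}$ and the uniform $\mathcal{M}^{4p}$ bounds on $Z,\,Z^{n}$; the hypothesis $\delta<1$ is what lets the singular quadratic term $\tfrac{\delta}{2y}|z|^{2}$ be absorbed, exactly as the choice $v(y)=y^{2}/(2(1-\delta))$ does in the proof of \sref{Theorem}{thm:exist1}.

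The main obstacle is the interaction of the singularity at $y=0$ with the generator-difference source term: the transformations that remove the $\tfrac{\delta}{2y}|z|^{2}$ term are nonlinear, so one must verify carefully that the pure-drift perturbation $\tfrac{\theta}{1-\theta}\rho^{n}$ really passes through them additively rather than being amplified into the exponential weight, and that the localization by the stopping times $\sigma^{t}_{n}$ used in \sref{Theorem}{thm:compa} can be removed in the limit under only $L^{2p(\delta+2)}$ control. Securing these two points, together with the matching of the H\"older exponents against \myeqref{eq13}, is where the bulk of the technical work lies.
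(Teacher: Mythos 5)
Your overall strategy for the $Y$-component (the $\theta$-technique with the generator mismatch $\rho^{n}$ carried as a source term) coincides with the paper's, and your drift bound on $\{\Delta_{\theta}Y^{n}>0\}$ is exactly right; but the mechanism you propose afterwards contains a genuine gap, precisely the one you flag as ``the main obstacle'' and then assert away. The perturbation does \emph{not} propagate additively through the transformation chain: after the power map $y\mapsto y^{2+\delta}/(2+\delta)$ that kills the singular term, the source is multiplied by $\big((\Delta_{\theta}Y^{n})^{+}\big)^{1+\delta}$, and after the $h$- and exponential-weight steps it appears as $pA^{n}_{s}R_{s}^{p-1}\big((\Delta_{\theta}Y^{n}_{s})^{+}\big)^{1+\delta}\frac{\lvert\rho^{n}_{s}\rvert}{1-\theta}$, where $R_{s}$ denotes the power-transformed difference. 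The paper decouples these products by repeated Young and Cauchy--Schwarz inequalities, and this is exactly the origin of the exponent $2p(2+\delta)$ in the hypothesis. As a consequence, your one-shot Doob-plus-H\"older passage to $\mathcal{S}^{4p}$ cannot close its exponents: because the $\theta$-technique leaves a residual of size $(1-\theta)C\big\{1+\mathbb{E}\big[\xi^{2p(2+\delta)}\big]\big\}$ that does not vanish with $n$, the limits must be taken in the order $n\to\infty$ first and then $\theta\to 1$, and what comes out is smallness of $\mathbb{E}\big[\sup_{t}\lvert Y^{n}_{t}-Y_{t}\rvert\big]$ at power one, with the increments entering at power $2p(2+\delta)$. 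Bounding the supremum at power $4p$ against increments that converge only in $L^{2p(2+\delta)}$ requires strict H\"older room between $4p$ and $2p(2+\delta)$, and at $\delta=0$ these two exponents coincide, so there is no room for the weights at all. The paper avoids this entirely with a two-step structure you should adopt: first, uniform integrability of $\sup_{t}\lvert Y^{n}_{t}-Y_{t}\rvert^{4p}$ from the a priori $\mathcal{S}^{4p(2+\delta)}$ bounds of \sref{Corollary}{corunique} via de la Vall\'ee Poussin; second, convergence of the supremum in $L^{1}$ (hence in probability) by the $\theta$-technique; then combine.

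The $Z$-part of your proposal would fail as stated. Applying It\^o's formula to $\lvert Y^{n}_{t}-Y_{t}\rvert^{2}$ produces the drift $2(Y^{n}-Y)\big(f_{n}(u,\,Y^{n},\,Z^{n})-f(u,\,Y,\,Z)\big)-\lvert Z^{n}-Z\rvert^{2}$, in which each singular quadratic term is evaluated at its own solution, $\frac{\delta\lvert Z^{n}\rvert^{2}}{2Y^{n}}$ and $\frac{\delta\lvert Z\rvert^{2}}{2Y}$, not at the difference. The growth condition \myeqref{eq3} says nothing about $f_{n}(u,\,Y^{n},\,Z^{n})-f(u,\,Y,\,Z)$ as a function of $(Y^{n}-Y,\,Z^{n}-Z)$, so there is nothing for the choice $v(y)=y^{2}/(2(1-\delta))$ to absorb: that trick requires the drift of the very process to which $v$ is applied to be dominated by $\cdots+\frac{\delta}{2y}\lvert z\rvert^{2}$ in that process's own variables. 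Moreover, the leftover terms of the form $\lvert Y^{n}-Y\rvert\,\frac{\lvert Z^{n}\rvert^{2}}{Y^{n}}$ cannot be controlled by $\mathcal{S}^{4p}$ convergence and $\mathcal{M}^{4p}$ bounds alone, since $Y^{n}$ is not uniformly bounded away from zero. The paper recovers the needed structure exactly where you lost it: it applies $\ell(y)=y^{2}/(2(1-\delta))$ to $(\delta_{\theta}Y)^{+}$ with $\delta_{\theta}Y=(Y-\theta Y^{n})/(1-\theta)$, where convexity of $f_{n}$ yields the drift bound $\frac{\lvert\delta f_{s}\rvert}{1-\theta}+a^{n}_{s}+b^{n}_{s}\delta_{\theta}Y_{s}+\gamma^{n}_{s}\lvert\delta_{\theta}Z_{s}\rvert+\frac{\delta\lvert\delta_{\theta}Z_{s}\rvert^{2}}{2\delta_{\theta}Y_{s}}$ in the difference variables; then $\ell'(y)\frac{\delta}{2y}\lvert z\rvert^{2}-\frac{1}{2}\ell''(y)\lvert z\rvert^{2}=-\frac{1}{2}\lvert z\rvert^{2}$ isolates $\mathds{1}_{\delta_{\theta}Y>0}\lvert\delta_{\theta}Z\rvert^{2}$, and one concludes from the symmetric estimate with $Y$ and $Y^{n}$ interchanged, the uniform moment bounds, and $\theta\to 1$.
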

\begin{proof}[Proof.]
	First, we need to verify that
	$\left( \sup_{0 \le t \le T}\lvert  Y^{n}_{t} - Y_{t} \rvert^{4p}\right)_{n \ge 1}$ is uniformly integrable.
	According to \sref{Corollary}{corunique},
$$\sup_{n \ge 1}\mathbb{E}\left[\sup_{t \in [0,\,T] }\lvert  Y^{n}_{t} - Y_{t} \rvert^{4p(2+\delta)} \right] \le  C \sup_{n \ge 1}\mathbb{E}\left[\sup_{t \in [0,\,T] }\lvert  Y^{n}_{t}  \rvert^{4p(2+\delta)} \right] + C\mathbb{E}\left[\sup_{t \in [0,\,T] }\lvert  Y_{t}  \rvert^{4p(2+\delta)} \right]< \infty, $$
thus $\left( \sup_{t \in [0,\,T]}\lvert  Y^{n}_{t} - Y_{t} \rvert^{4p}\right)_{n \ge 1}$ is uniformly integrable by de la Vall\'ee Poussin's criterion. 

	Our task now is to prove
	$\text{ as } n \to \infty,\,\sup_{0 \le t \le T}\lvert  Y^{n}_{t} - Y_{t} \rvert  \to 0 \text{ in } \mathbb{P} . $
	For any given $ 0\le s \le T,\,\theta \in (0,\, 1)$, and $n \ge 1,$ we define the following notation: 
	$$\begin{aligned}
	& \delta_{\theta}\xi := \frac{\xi-\theta \xi_{n}}{1-\theta}, \, \delta_{\theta}Y_{s}:=\frac{Y_{s}-\theta Y^{n}_{s}}{1-\theta},\, \  \delta_{\theta} Z_{s} := \frac{Z_{s}-\theta Z^{n}_{s}}{1-\theta},   \\ &
	\delta f_{s} := f(s,\, Y_{s},\, Z_{s})- f_{n}(s,\, Y_{s},\, Z_{s}),\\ &
	R_{s}:=\frac{\left((\delta_{\theta}Y_{s})^{+}\right)^{2+\delta}}{2+\delta}.
	\end{aligned}$$ 
	
	Applying Tanaka-Meyer-It\^o formula and Lemma 3.1 in \citep{Yang2017}, we have due to assumptions on $ f_{n} $ and the Young's inequality that
	$$\begin{aligned}
	R^{p}_{t}A^{n}_{t}  \le  &R^{p}_{T}A^{n}_{T}+  \int_{t}^{T}  pA^{n}_{s} R_{s}^{p-1}
	\left((\delta_{\theta}Y_{s})^{+}\right)^{1+\delta}\frac{\lvert \delta f(s)\rvert}{1-\theta} + pA^{n}_{s} R_{s}^{p-1}a^{n}_{s}\\-&\frac{p(p-1)}{4}A^{n}_{s}\mathds{1}_{ R_{s}>0 } R_{s}^{p-2}\left((\delta_{\theta}Y_{s})^{+}\right)^{2+2\delta} \lvert \delta_{\theta}Z_{s} \vert^{2}d s \\+ &\int_{t}^{T}pA^{n}_{s} R_{s}^{p-1}\left((\delta_{\theta}Y_{s})^{+}\right)^{1+\delta}\delta_{\theta}Z_{s}dW_{s}, \, 0 \le t \le T. 
	\end{aligned}$$
	Denote
	$$ 
	N_{t} := \int_{0}^{t}pA^{n}_{s}  R_{s}^{p-1}\left((\delta_{\theta}Y_{s})^{+}\right)^{1+\delta}\delta_{\theta}Z_{s}dW_{s}. $$ 
	Due to \myeqref{eq13} together with integrability properties of $ Y,\, Z$ and $ Z^{n} $, we can derive that $ N $ is a martingale. Thus we have
	$$\begin{aligned}
	&\frac{p(p-1)}{4}\mathbb{E}\left[ \int_{t}^{T}A^{n}_{s}\mathds{1}_{R_{s}>0} R_{s}^{p-2}\left((\delta_{\theta}Y_{s})^{+}\right)^{2+2\delta} \lvert \delta_{\theta}Z_{s} \vert^{2}d s\right] \\ \leq& \mathbb{E}\left[R^{p}_{T}A^{n}_{T} \right] + \mathbb{E}\left[ \int_{t}^{T}  pA^{n}_{s} R_{s}^{p-1}
	\left((\delta_{\theta}Y_{s})^{+}\right)^{1+\delta}\frac{\lvert \delta f(s)\rvert}{1-\theta} \!+\! pA^{n}_{s} R_{s}^{p-1}a^{n}_{s}d s \right],
	\end{aligned}$$ 
	and it follows from the BDG inequality that
	\begin{align}\label{eq14}
	\mathbb{E}\left[\sup_{0 \le t \le T}R^{p}_{t}A_{t}^{n} \right] \leq &  \mathbb{E}\left[  R^{p}_{T}A^{n}_{T} \right] + \mathbb{E}\left[ \int_{0}^{T}  pA^{n}_{s} R_{s}^{p-1}
	\left((\delta_{\theta}Y_{s})^{+}\right)^{1+\delta}\frac{\lvert \delta f_{s}\rvert}{1-\theta}ds\right]\notag\\ +&\mathbb{E}\left[ \int_{0}^{T} pA^{n}_{s} R_{s}^{p-1}a^{n}_{s}ds \right]+c(1)\mathbb{E}\left[ \langle N\rangle_{T}^{1/2}\right].  
	\end{align}
	Utilizing Young's inequality
	, a routine computation gives rise to
	\begin{align}\label{eq15}
	c(1)\mathbb{E}\left[ \langle N\rangle_{T}^{1/2}\right]\leq & \frac{1}{2} \mathbb{E}\left[\sup_{0 \le t \le T}R^{p}_{t}A^{n}_{t} \right]+\frac{2pc^{2}(1)}{p-1}\Bigg\{\mathbb{E}\Big[R^{p}_{T}A^{n}_{T} \Big] \notag \\+& \mathbb{E}\left[ \int_{t}^{T}  \left(pA^{n}_{s} R_{s}^{p-1}
	\left((\delta_{\theta}Y_{s})^{+}\right)^{1+\delta}\frac{\lvert \delta f_{s}\rvert}{1-\theta} + pA^{n}_{s} R_{s}^{p-1}a^{n}_{s}\right)d s \right]\Bigg\}.
	\end{align}
	By substituting \myeqref{eq15} to  \myeqref{eq14} and denoting $ C_{1} := 2(1+\frac{2pc^{2}(1)}{p-1})$, we have
	\begin{equation}\label{eq16}
	\mathbb{E}\left[\sup_{0 \le t \le T}R^{p}_{t}A^{n}_{t} \right] \leq  C_{1}\mathbb{E}\left[  R^{p}_{T}A^{n}_{T} +\! \int_{0}^{T} pA^{n}_{s} R_{s}^{p-1}\left(  
	\left((\delta_{\theta}Y_{s})^{+}\right)^{1+\delta}\frac{\lvert \delta f_{s}\rvert}{1-\theta}+a^{n}_{s}\right)ds \right].
	\end{equation}
	Analogously, employing the Young's inequality successively to the last two terms in \myeqref{eq16} yields that
	\begin{equation*}
	\mathbb{E}\left[\sup_{0 \le t \le T}R^{p}_{t}A^{n}_{t} \right] \leq  C\mathbb{E}\left[  R^{p}_{T}A^{n}_{T}  +  \left( \int_{0}^{T}  (A^{n}_{s})^{\frac{1}{p(2+\delta)}} \frac{\lvert \delta f_{s}\rvert}{1-\theta}ds \right)^{p(2+\delta)}\!\! \!\! + \left(\int_{0}^{T} \left(A^{n}_{s}\right)^{\frac{1}{p}}a^{n}_{s}ds\right)^{p} \right] .
	\end{equation*}
	Since $\sup_{n \ge 1}\mathbb{E}\left[(A_{T}^{n})^{4}\right] <+\infty $, we deduce from Young's inequality that 
	$$
	\mathbb{E}\left[\sup_{0 \le t \le T}\left( Y_{t}-\theta Y^{n}_{t} \right)^{+} \right] \leq  (1-\theta)C\left\{1+\mathbb{E}\left[ \left(\frac{(\xi-\theta \xi_{n})^{+}}{1-\theta} \right)^{2p(2+\delta)}  \!\!\!+ \left( \int_{0}^{T}  \frac{\lvert \delta f_{s}\rvert}{1-\theta}ds \right)^{2p(2+\delta)}\right] \right\}. 
	$$
	
	Interchanging $Y^{n} $ and $ Y$ and similar deductions then lead to
	$$
	\mathbb{E}\left[\sup_{0 \le t \le T}\left( Y^{n}_{t}-\theta Y_{t} \right)^{+} \right] \leq (1-\theta)C\left\{1+ \mathbb{E}\left[ \left(\frac{(\xi_{n}-\theta \xi)^{+}}{1-\theta} \right)^{2p(2+\delta)}  \!\!\! + \left( \int_{0}^{T}  \frac{\lvert \delta f_{s}\rvert}{1-\theta}ds \right)^{2p(2+\delta)}\right]\right\}.
	$$
	Finally, it can be easily checked that 
	$$\begin{aligned}
	&	\mathbb{E}\left[\sup_{0 \le t \le T}\lvert Y^{n}_{t}- Y_{t} \rvert \right] \\ \leq & (1-\theta)C\left\{1+ \mathbb{E}\left[\left( \int_{0}^{T}  \frac{\lvert \delta f_{s}\rvert}{1-\theta}ds \right)^{2p(2+\delta)}+\left(  \frac{\lvert\xi_{n}- \xi\rvert}{1-\theta}\right) ^{2p(2+\delta)}  + \xi^{2p(2+\delta)}  + \xi_{n}^{2p(2+\delta)}  \right] \right\}. 
	\end{aligned}$$
	Recall that $\lvert\xi-\xi_{n}\rvert $ and $\int_{0}^{T} \lvert \delta f_{s}\rvert ds $ converge to 0 in $ L^{2p(2+\delta)} $ space as $ n \to \infty, $ consequently,
	$$ \lim_{n \to \infty} \mathbb{E}\left[\sup_{0 \le t \le T}\lvert Y^{n}_{t}- Y_{t} \rvert \right] \leq  (1-\theta)C\left\{1+\mathbb{E}\left[ \xi^{2p(2+\delta)} \right]\right\}. $$
	By sending $ \theta \to 1, $ we have $ \sup_{0 \le t \le T}\lvert Y^{n}_{t}- Y_{t} \rvert $ converges to 0 in probability.  Thus we arrive at the conclusion that $ \lim\limits_{n \to \infty} \mathbb{E}\left[\sup_{0 \le t \le T}\lvert Y^{n}_{t}- Y_{t} \rvert^{4p} \right] =0 $. 
	
	Now we prove that $ \lim\limits_{n \to \infty} \mathbb{E}\left[\left( \int_{0}^{T}\lvert Z^{n}_{t}- Z_{t} \rvert^{2} \ dt\right) ^{p} \right] =0 $ when $ 0 \le \delta <1 $.
	Set $ \ell(y) := \dfrac{y^{2}}{2(1-\delta)},\ y\in \mathbb{R}$. It is clear that $ \ell(\cdot) \in \mathcal{C}^{2}(\mathbb{R}) $. Applying It\^o's formula to $ \ell((\delta_{\theta}Y_{t})^{+}) $ yields that
	$$\begin{aligned}
	\ell((\delta_{\theta}Y_{t})^{+}) \le& \ell((\delta_{\theta}\xi)^{+}) +\int_{t}^{T}\left(\ell'((\delta_{\theta}Y_{s})^{+}) \mathds{1}_{ \delta_{\theta}Y_{s}>0 }\left(\frac{\lvert\delta f_{s}\rvert}{1-\theta} + a^{n}_{s} + b^{n}_{s}\delta_{\theta}Y_{s}+\gamma^{n}_{s}\lvert \delta_{\theta}Z_{s}\rvert\right)\right.\\-&\left.\frac{1}{2}\mathds{1}_{ \delta_{\theta}Y_{s}>0 }\lvert \delta_{\theta}Z_{s}\rvert^{2} \right)  ds  -\int_{t}^{T}\ell'((\delta_{\theta}Y_{s})^{+})\delta_{\theta}Z_{s}dW_{s}.
	\end{aligned}
	$$
	By means of the same way as \sref{Theorem}{thm:exist1}, we derive that
	$$ \begin{aligned}
	&	\mathbb{E}\left[\left( \int_{0}^{T}\mathds{1}_{ \delta_{\theta}Y_{t}>0 }\lvert  Z_{t}-\theta Z^{n}_{t}\rvert^{2} \ dt\right) ^{p} \right]\\ \le & (1-\theta)^{2p}C\left\{1+ \mathbb{E}\left[\sup\limits_{0 \le t \le T}\left( (\delta_{\theta}Y_{t})^{+}\right)^{4p} \right] +
	\mathbb{E}\left[ \left( \int_{0}^{T}  \frac{\lvert \delta f_{s}\rvert}{1-\theta}ds \right)^{2p}\right]\right.\\+&\left.
	\mathbb{E}\left[\left( \int_{0}^{T} a^{n}_{s} \ ds \right)^{2p}+\left( \int_{0}^{T} b^{n}_{s} \ ds \right)^{2p} +\left( \int_{0}^{T}\left(\gamma^{n}_{s}\right)^{2}\ ds \right)^{2p}\right] 
	\right\}.
	\end{aligned}$$
	Owing to the convergence of $ Y_{n} $, $ f_{n} $ and \myeqref{eq13}, it follows that
	$$\lim\limits_{n \to \infty} \mathbb{E}\left[\left( \int_{0}^{T}\mathds{1}_{ Y_{t}>\theta Y^{n}_{t} }\lvert  Z_{t}-\theta Z^{n}_{t}\rvert^{2} \ dt\right) ^{p} \right] \le C(1-\theta)^{2p} .$$
	Similarly we have $$\lim\limits_{n \to \infty} \mathbb{E}\left[\left( \int_{0}^{T}\mathds{1}_{ Y^{n}_{t}>\theta Y_{t} }\lvert  Z_{t}^{n}-\theta Z_{t}\rvert^{2} \ dt\right) ^{p} \right] \le C(1-\theta)^{2p}.$$
	Noting the facts that
	$$ \begin{aligned}
	&\mathbb{E}\left[\left( \int_{0}^{T}\lvert  Z_{s}-\theta Z^{n}_{s}\rvert^{2} \ ds\right) ^{p} \right] \\ \le &  C\left\{\mathbb{E}\left[\left( \int_{0}^{T}\mathds{1}_{ Y_{s}>\theta Y^{n}_{s} }\lvert  Z_{s}-\theta Z^{n}_{s}\rvert^{2} \ ds\right) ^{p} \right. +
	\left( \int_{0}^{T}\mathds{1}_{ Y^{n}_{t}>\theta Y_{t} }\lvert  Z_{t}^{n}-\theta Z_{t}\rvert^{2} \ dt\right) ^{p} \right. \\+& \left. (1-\theta)^{2p}
	\left( \int_{0}^{T}\lvert  Z_{t}^{n} \rvert^{2} \ dt\right) ^{p}  +
	(1-\theta)^{2p}
	\left. \left( \int_{0}^{T}\lvert  Z_{t} \rvert^{2} \ dt\right) ^{p} \right]\  \right\},
	\end{aligned}
	$$ and
	$$ \begin{aligned}
	&\sup\limits_{n\ge1}\mathbb{E}\left[\left( \int_{0}^{T}\lvert Z^{n}_{t}\rvert^{2} \ dt\right) ^{p} \right]  \le C \sup\limits_{n\ge1} \Bigg\{ 1 + \mathbb{E}\Bigg[\sup\limits_{0 \le t \le T}\left( Y_{t}^{n}\right)^{4p}  \\ + &\left( \int_{0}^{T}a^{n}_{t} \ dt\right) ^{2p} + \left( \int_{0}^{T}b^{n}_{t} \ dt\right) ^{2p} +\left( \int_{0}^{T}\left( \gamma_{t}^{n}\right) ^{2} \ dt\right) ^{2p} \Bigg]\  \Bigg\},	\end{aligned}$$ 
	we obtain that	
	$$ 
	\lim\limits_{n\to \infty}\mathbb{E}\left[\left( \int_{0}^{T}\lvert  Z_{s}- Z^{n}_{s}\rvert^{2} \ ds\right) ^{p} \right] \le   C(1-\theta)^{2p}.
	$$
	It is worth noting that the generic constant $ C $ throughout the proof is independent of $ \theta. $ Consequently, we infer that $ Z^{n} $ converges to $ Z $ in $ \mathcal{M}^{2p} $ by sending $ \theta \to 1  $. Now the proof is completed.
\end{proof}
\begin{remark}
	It's a pity that we cannot prove $ Z^{n} $ converges to $ Z $ in $ \mathcal{M}^{2p} $ when $ \delta>1$ due to we don't find an auxiliary function $ v(\cdot) $ satisfying that $  v'(y)\frac{\delta}{2y}-v''(y)/2 =-c, y>0 \text{ for some } c >0$ and $ v'(y)\ge 0, y \ge 0. $ Solving this differential equation, the difficulty is that $ v'(y)\ge 0 $ if and only if $ y\ge \left(  2ce^{-\delta \bar{c}} \right) ^{1/(\delta-1)} >0,$ where $ \bar{c} $ is an arbitrary constant.
\end{remark}

\subsection{Application to Singular Quadratic PDEs}

In this subsection, we investigate singular quadratic PDEs associated with the BSDEs we study. To be specific, we prove the related nonlinear Feynman-Kac formula as well as the uniqueness of viscosity solutions. Consider the semilinear PDE for $ z \in \mathbb{R}^{n},\, 0 \le s \le T, $
\begin{equation}\label{eq17}
-\mathscr{L} u(s,\, z)-\partial_{s} u(s,\, z)-f\left(s,\, z,\, u(s,\, z),\, (\left(  \partial_{z}u\right) ^{\top}\sigma )(s,\, z)\right)=0, \quad u(T,\, \cdot)=\Psi(\cdot),
\end{equation}
in which $\mathscr{L}$ refers to the infinitesimal generator of  $X^{t',\, x'}$, the solution to the following SDE for each fixed $ (t',\, x') \in [0,\, T] \times \mathbb{R}^{n}$
\begin{equation}\label{eq18}
X_{t}=x'+\int_{t'}^{t} B\left(r,\, X_{r}\right) d r+\int_{t'}^{t} \sigma\left(r,\, X_{r}\right) d W_{r},\, \ t' \leq t \leq T, \quad X_{t}=x', \ 0 \le t \leq t'. 
\end{equation}
In addition, let $(Y^{t',\,x'},\, Z^{t',\,x'})$  solve the BSDE
\begin{equation}\label{eq20}
Y_{s} = \Psi(X_{T}^{t',\, x'}) + \int_{s}^{T}f(r,\, X_{r}^{t',\, x'},\, Y_{r},\,Z_{r}) dr - \int_{s}^{T}Z_{r} dW_{r}, \ 0\leq s\leq T.
\end{equation}
Our first goal is to verify \myeqref{eq17} has a viscosity solution defined by
\begin{equation}\label{eq19}
u(s,\, x) := Y_{s}^{s,\,x},\,  s\in 0,\le s \le T,\,x \in  \mathbb{R}^{n}.
\end{equation}

First, we retrospect the definition of viscosity solutions to \myeqref{eq17}. 
\begin{definition}
	A function $ u $ that is continuous on $ [0,\, T] \times \mathbb{R}^{n} $ is defined as a viscosity subsolution (resp. supersolution) to \myeqref{eq17} if for each $ (s,\, x) \in \left[ 0,\left. \right. T\right) \times \mathbb{R}^{n} $ and $\varpi \in \mathcal{C}^{1,2}([0,\, T] \times \mathbb{R}^{n}) $ satisfying $ (s,\, x) $ is a local minimum (resp. maximum) of $ \varpi - u $, 
	$$ -\partial_{s} \varpi(s,\, x)-\mathscr{L} \varpi(s,\, x)-f\left(s,\, x,\, \varpi(s,\, x),\, (\left(  \partial_{x} \varpi\right) ^{\top}\sigma)(s,\, x)\right)\le 0 \quad \left(\text{  resp. } \ge 0\right) $$ 
	as well as
	$$ \forall x \in \mathbb{R}^{n}, \,u(T,\, x) 
	\leq  \Psi(x) \quad\left(\text{ resp. }\ge \right).$$
	Furthermore, $ u $ is termed as a viscosity solution if it is a viscosity supersolution as well as a viscosity subsolution. 
\end{definition}
Now we give our assumptions.

\noindent\textbf{(A.1)} $ B: [0,\, T] \times \mathbb{R}^{n} \mapsto \mathbb{R}^{n} $ as well as $\sigma: [0,\, T] \times \mathbb{R}^{n} \mapsto \mathbb{R}^{n \times d} $ are continuous functions fulfilling for all $ (s,\, z^{1},\, z^{2}) \in [0,\, T] \times \mathbb{R}^{n} \times \mathbb{R}^{n},$ 
$$\vert B(s,\, 0)\rvert + \vert \sigma(s,\, z^{1})\rvert \le J $$ and
$$\vert B(s,\, z^{1}) - B(s,\, z^{2})\rvert + \vert \sigma(s,\, z^{1}) - \sigma(s,\, z^{2})\rvert \le J\vert z^{1} - z^{2} \rvert  $$
are met for some constant $ J >0 $.

\noindent\textbf{(A.2)} $ f: [0,\, T] \times \mathbb{R}^{n} \times \mathbb{R}\times \mathbb{R}^{1 \times d} \mapsto \mathbb{R} $ and $ \Psi: \mathbb{R}^{n} \mapsto (0,\, +\infty) $ are continuous functions fulfilling for all $ (s,\, x,\, y,\, z) \in [0,\, T] \times \mathbb{R}^{n} \times (0,\, +\infty) \times \mathbb{R}^{1 \times d}$, 
$$\begin{aligned}
&(y,\, z) \mapsto  f(s,\, y,\, z)  \text{ is convex }, \\
&\Psi(x) \le k\left(1+\lvert x \rvert^{q}\right),  \\ 
\text{and \qquad}&	0 \le  f(s,\, x,\, y,\, z)  \le k\left(1+\lvert x \rvert^{q}+ y + \lvert z\rvert  \right) +\frac{\delta|z|^{2}}{2y} 
\end{aligned}$$
are met for some constants  $ 0 \le \delta \neq 1,\, q \in \left[\left.1,\, 2\right)\right.$, and $ k \ge 0.$  

Given (A.1) and constants $p \ge 1,\, t' \in [0,\, T],\, x' \in  \mathbb{R}^{n} $, there exists a unique solution $ X^{ t',\, x'} \in \mathcal{S}^{p}( \mathbb{R}^{n}) $ to SDE \myeqref{eq18}.
Moreover, since $ \sigma $ is bounded, we can obtain similarly to \citep{BriandHu2008} that for any $ q \in \left[\left.1,\, 2\right)\right., $

$$\forall \lambda >0,\, \mathbb{E}\left[ \sup_{0 \le t \le T} \exp \bigg\{ \lambda \lvert X_{t}^{t',\, x'}\rvert^{q} \bigg\} \right] \le C \exp\bigg\{\lambda C \lvert x' \rvert^{q}\bigg\}.$$
Consequently, we infer from (A.2) together with \sref{Corollary}{corunique} that BSDE \myeqref{eq20} admits only one solution $ (Y,\, Z) \in \mathcal{S}^{p} \times \mathcal{M}^{p},\, \forall p \ge 1.$ 
\begin{theorem}
	Assuming that conditions (A.1) and (A.2) are met, the function $ u $ defined in \myeqref{eq19} serves as a viscosity solution for \myeqref{eq17} and fulfills the following inequality
\begin{equation}\label{eq21}
	0 < u(t', x') \leq C\exp\bigg\{C \lvert x' \rvert^{q}\bigg\},\quad \forall  t' \in [0 , \, T], x' \in \mathbb{R}^{n}.
	\end{equation}
	
\end{theorem}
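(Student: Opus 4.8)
The plan is to follow the classical nonlinear Feynman–Kac scheme (in the spirit of Pardoux–Peng and El Karoui–Peng–Quenez), adapting it to the singular quadratic generator by exploiting the strict positivity of $u$, which localizes the problem away from the singularity at $y=0$. I would organize the argument into four steps: the a priori bound \eqref{eq21}, continuity of $u$, the Markov (flow) representation $Y^{t',x'}_s=u(s,X^{t',x'}_s)$, and finally the two viscosity inequalities. Note first that $u(t',x')=Y^{t',x'}_{t'}$ is deterministic, since the BSDE value at the initial time with deterministic data is $\mathcal{F}_{t'}$-measurable and constant.

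For \eqref{eq21}, the lower bound $u(t',x')>0$ follows because, by the construction underlying \sref{Theorem}{thm:exist1}, the (unique, by \sref{Corollary}{corunique}) solution satisfies $Y_s\ge \mathbb{E}[\Psi(X^{t',x'}_T)\mid\mathcal{F}_s]$, and $\Psi>0$ forces $u(t',x')\ge\mathbb{E}[\Psi(X^{t',x'}_T)]>0$. For the upper bound I would apply the a priori estimate of \sref{Corollary}{corunique} with $a_s=k(1+|X^{t',x'}_s|^q)$, $b_s=\gamma_s=k$ read off from (A.2), and $\xi=\Psi(X^{t',x'}_T)\le k(1+|X^{t',x'}_T|^q)$. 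Since $q<2$, the exponential moment estimate $\mathbb{E}[\sup_t\exp\{\lambda|X^{t',x'}_t|^q\}]\le C\exp\{\lambda C|x'|^q\}$ controls both the factor $\exp\{c\int(a_s+b_s)\,ds\}$ and the polynomial terms in $\xi$ simultaneously (via H\"older), yielding $u(t',x')\le C\exp\{C|x'|^q\}$.

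For continuity I would combine the standard continuous dependence of the forward SDE \eqref{eq18} on $(t',x')$ under (A.1) with the stability result \sref{Theorem}{stability}. Given $(t_n,x_n)\to(t,x)$, the SDE estimates give $X^{t_n,x_n}\to X^{t,x}$ in every $\mathcal{S}^r$, so by continuity of $\Psi$ and $f$ the terminal data converge in $L^{2p(2+\delta)}$ and the driver differences converge as required, with the exponential moment bound supplying the needed uniform integrability; the time shift $t_n\to t$ is absorbed by the convention $X_t=x'$ for $t\le t'$ built into \eqref{eq18}. Stability then yields $Y^{t_n,x_n}\to Y^{t,x}$ in $\mathcal{S}^{4p}$, whence $u(t_n,x_n)\to u(t,x)$. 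The Markov property $Y^{t',x'}_s=u(s,X^{t',x'}_s)$ for $t'\le s\le T$ is then a consequence of the flow identity $X^{t',x'}_r=X^{s,X^{t',x'}_s}_r$ together with the uniqueness in \sref{Corollary}{corunique}, since both sides solve the same BSDE on $[s,T]$ with the same terminal value.

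Finally, for the viscosity inequalities I would argue by contradiction in the usual way. For the subsolution property, let $\varpi\in\mathcal{C}^{1,2}$ touch $u$ from above at $(s_0,x_0)$; if the inequality failed, continuity would make it fail on a neighborhood, and here the strict positivity $u(s_0,x_0)>0$ is decisive, since it keeps $u$ and $\varpi$ bounded away from $0$ on a small space-time ball, so that $f$ is continuous and bounded there and the singularity plays no role. Applying It\^o's formula to $\varpi(r,X^{s_0,x_0}_r)$, using $Y^{s_0,x_0}_r=u(r,X^{s_0,x_0}_r)\le\varpi(r,X^{s_0,x_0}_r)$ from the Markov step, and comparing with the BSDE dynamics up to a small stopping time produces the contradiction; the supersolution inequality is symmetric. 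I expect the continuity step to be the main obstacle, as it is where the stability theorem, the continuous dependence of the forward–backward system on its initial data, and the exponential-moment integrability must be reconciled. Once continuity and strict positivity are secured, the viscosity argument effectively reduces to the classical locally regular case.
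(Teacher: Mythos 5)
Your architecture (a priori bound via \sref{Corollary}{corunique}, continuity via the stability theorem, the Markov identity $Y^{t',x'}_s = u(s,X^{t',x'}_s)$, then the viscosity inequalities) coincides with the paper's for the first three steps, and those steps are sound — indeed you are more explicit than the paper about the Markov/flow identity, which the paper simply asserts. The divergence is in the last step, and that is where your proposal breaks down. The paper does not run the classical contradiction argument at all: it invokes Kobylanski's \emph{touching property}, comparing the Itô decompositions of $Y_s = u(s,X^{s_0,x_0}_s)$ and $\varpi(s,X^{s_0,x_0}_s)$ on the set where they coincide. This yields simultaneously the drift inequality and the identification $Z_{s_0} = ((\partial_x\varpi)^\top\sigma)(s_0,x_0)$, with no comparison theorem, no Girsanov transform, and no boundedness of $Z$ ever required.

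Your version contains a genuine gap exactly where the paper's device is doing the work. You claim that strict positivity of $u$ keeps ``$f$ continuous and bounded'' on a small space--time ball, so that ``the singularity plays no role'' and the classical comparison-up-to-a-stopping-time goes through. Localization in $(s,x)$ does bound $s$, $X_s$ and $Y_s=u(s,X_s)$, and keeps $Y_s$ away from $0$; but it does nothing to the control process $Z_s$, which is only known to lie in $\mathcal{M}^{p}$. Since $f$ grows quadratically in $z$, the quantity $f(s,X_s,Y_s,Z_s)$ is \emph{not} bounded on $[s_0,\tau]$, and the difference $f(s,X_s,\varpi_s,\zeta_s)-f(s,X_s,Y_s,Z_s)$ (with $\zeta_s=((\partial_x\varpi)^\top\sigma)(s,X_s)$) is not controlled by the driver inequality you obtain on the neighbourhood. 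Consequently the comparison step is unavailable as stated: $f$ is merely continuous and convex, not Lipschitz, and the paper's comparison theorem (\sref{Theorem}{thm:compa}) is proved on a deterministic horizon under global structure conditions, not on a random interval $[s_0,\tau]$. To repair your route you would need (i) a linearization via a subgradient of the convex map $(y,z)\mapsto f(s,x,y,z)$ taken at $(Y_s,Z_s)$, whose coefficients are of size $1+|Z_s|$ and $1+|Z_s|^2$; (ii) a local BMO estimate for $\mathds{1}_{[s_0,\tau]}Z\circ W$ (exploiting the boundedness of $Y$ on $[s_0,\tau]$, in the spirit of \sref{Corollary}{cor:bmo}) to legitimize the Girsanov change of measure; and (iii) a localization-plus-Fatou argument to conclude. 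None of this appears in your sketch. A further small inaccuracy: the supersolution case is not ``symmetric'' — there the subgradient can be taken at the bounded point $(\varpi_s,\zeta_s)$, so that direction is genuinely easier; it is the subsolution direction that carries all the difficulty. Finally, in the continuity step you should note that uniform integrability of the driver differences requires a moment bound on $\int_0^T |Z^{t,x}_s|^2/Y^{t,x}_s\,ds$, which the paper obtains through an auxiliary function and It\^o's formula; the exponential moments of $X$ alone do not supply it.
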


\begin{proof}[Proof.]
	Let us first prove the continuity of the map $ (t,\, x) \mapsto u(t,\, x) $. Assume that the point sequence $ (t_{n},\, x_{n})_{n \ge 1} $ converges to $ (t,\, x) $ as $ n \to +\infty. $ 
	We claim that $$ \forall p > 0, \lim\limits_{n \to \infty} \mathbb{E}\left[\lvert \Psi(X_{T}^{t_{n},\, x_{n}})-\Psi(X_{T}^{t,\, x})\rvert^{p}\right] = 0.$$
	This is obvious from 
	$\lim\limits_{n \to \infty} \mathbb{E}\left[\lvert X_{T}^{t_{n},\, x_{n}}-X_{T}^{t,\, x}\rvert^{p}\right] = 0,\, \forall p > 0 $ and the Vitali convergence theorem. Now we prove that for any $ p > 0,  $
\begin{equation}\label{eq22}
	\lim_{n \to \infty} \mathbb{E}\left[\left(\int_{0}^{T}\big \lvert  f(s,\, X_{s}^{t_{n},\, x_{n}},\, Y_{s}^{t,\, x},\, Z_{s}^{t,\, x})-f(s,\, X_{s}^{t,\, x},\, Y_{s}^{t,\, x},\, Z_{s}^{t,\, x})\big\rvert ds\right)^{p}\right] = 0
	\end{equation}
	by the same token.
	Due to the continuity of $ f $, it follows that 
	$$ \int_{0}^{T}\big\lvert  f(s,\, X_{s}^{t_{n},\, x_{n}},\, Y_{s}^{t,\, x},\, Z_{s}^{t,\, x})-f(s,\, X_{s}^{t,\, x},\, Y_{s}^{t,\, x},\, Z_{s}^{t,\, x})\big\rvert ds \to 0 \text{ in } \mathbb{P}. $$
	Note the fact that 
	$$\begin{aligned}
	&\sup_{n \ge 1} \mathbb{E}\left[\left(\int_{0}^{T}\big\lvert f(s,\, X_{s}^{t_{n},\, x_{n}},\, Y_{s}^{t,\, x},\, Z_{s}^{t,\, x})-f(s,\, X_{s}^{t,\, x},\, Y_{s}^{t,\, x},\, Z_{s}^{t,\, x})\big\rvert ds\right)^{p}\right] \\ \le & C\left\{ 1 + \lvert x \rvert^{pq}+ \sup_{n \ge 1}\lvert x_{n}\rvert^{pq} +  
	\mathbb{E}\left[\left(\int_{0}^{T}\frac{\lvert Z_{s}^{t,\, x}\rvert^{2}}{Y_{s}^{t,\, x}}ds\right)^{p}\right] + \ \mathbb{E}\left[ \sup_{0 \le s \le T} \left(Y_{s}^{t,\, x} \right)^{p}\right]\right\}.
	\end{aligned}$$
	In order to estimate $  \mathbb{E}\left[\left( \int_{0}^{T}\frac{\lvert Z_{s}^{t,\, x}\rvert^{2}}{Y_{s}^{t,\, x}}ds \right)^{p}\right]$, we define 
	$$
	I(y) := 
	\begin{cases}
	\frac{y^{\delta+1}}{\delta(\delta+1)}-\frac{y}{\delta}-\frac{\delta}{\delta+1},\quad & y \ge 1,\\
	y\ln y-y,\quad & 0< y < 1.
	\end{cases}
	$$
	Then $ I(\cdot) \in \mathcal{C}^{2}((0,\, +\infty))$. By utilizing It\^o's formula on $ I(Y^{t,\, x}) $, and calculating the expectation, we obtain $\forall p >0, $
	$$\mathbb{E}\left[\left( \int_{0}^{T}\frac{\lvert Z_{s}^{t,\, x}\rvert^{2}}{Y_{s}^{t,\, x}}ds \right)^{p}\right] 
	\le C\left\{ 1 + \mathbb{E}\left[ \sup_{0 \le s \le T} \left( Y_{s}^{t,\, x} \right)^{2p(1+\delta)}\right]+ \mathbb{E}\left[\left( \int_{0}^{T}\lvert Z_{s}^{t,\, x}\rvert^{2} ds\right)^{p(1+\delta)/2}\right]\right\}.$$	
	Thus \myeqref{eq22} is now evident from the Vitali convergence theorem.
	It can be easily checked that the integrable condition \myeqref{eq13} still holds. We obtain the continuity of $ u $ through \sref{Theorem}{stability}. A routine computation can establish the growth of $ u $ due to (A.2) and \sref{Corollary}{corunique}. If we can verify that $ u $ is a viscosity subsolution, then the theorem follows immediately.
	Our proof is in the spirit of the touching property (see \citep{Kobylanski2000}). For any fixed smooth function $ \varpi $, we assume $ \varpi-u $ reaches a local minimum at the point $ (s_{0},\, x_{0})$. We can suppose, without loss of generality, that for all $ (s,\, x) \in [0,\,T] \times \mathbb{R}^{n} $, $\varpi(s,\, x) \ge u(s,\,x) $ and $ u(s_{0},\, x_{0}) = \varpi(s_{0},\, x_{0}) $. The aim is to prove 
	$$\partial_{s} \varpi(s_{0},\, x_{0})+\mathscr{L} \varpi(s_{0},\, x_{0})+f\left(s_{0},\, x_{0},\, \varpi(s_{0},\, x_{0}),\, (\left(  \partial_{x} \varpi\right) ^{\top}\sigma)(s_{0},\, x_{0})\right)\ge 0.  $$
	For ease of notation, we treat $ (Y,\,Z) $ as $ \left( Y^{s_{0},\, x_{0}},\,Z^{s_{0},\, x_{0}}\right)  $. 
	From \myeqref{eq20} and It\^o's formula we obtain that
	$$\begin{aligned}
	-d Y_{s} &=  f(t,\, X_{s}^{s_{0},\, x_{0}},\, Y_{s},\,Z_{s}) ds - Z_{s} dW_{s}, \\
	d \varpi(s,\, X^{s_{0},\, x_{0}}_{s}) &= \left(\partial_{s} \varpi(s,\, X^{s_{0},\, x_{0}}_{s}) +\mathscr{L} \varpi(s,\, X^{s_{0},\, x_{0}}_{s})  \right) d s + (\left(  \partial_{x} \varpi\right) ^{\top}\sigma)(s,\, X^{s_{0},\, x_{0}}_{s})d W_{s}.
	\end{aligned}$$
	Note that $ u(s,\,  X^{s_{0},\, x_{0}}_{s}) = Y_{s},$ hence $ \varpi(s,\,  X^{s_{0},\, x_{0}}_{s}) \ge Y_{s}. $ In view of touching property, we have for all $ s \in [0,\,T],$ \as,
	$$\begin{aligned}
	\mathds{1}_{\varpi\left(s,\,  X^{s_{0},\, x_{0}}_{s}\right) = Y_{s} }   \Big(\left(\partial_{s}\varpi + \mathscr{L} \varpi \right)(s,\, X^{s_{0},\, x_{0}}_{s})+ f(s,\, X_{s}^{s_{0},\, x_{0}},\, Y_{s},\,Z_{s}) \Big) &\ge 0, \\ 
	\mathds{1}_{\varpi\left(s,\,  X^{s_{0},\, x_{0}}_{s}\right) = Y_{s} }  \lvert (\left(  \partial_{x} \varpi\right) ^{\top}\sigma)(s,\, X^{s_{0},\, x_{0}}_{s}) -Z_{s}\rvert &=0.\end{aligned}$$
	Due to $ \varpi(s_{0},\,  X^{s_{0},\, x_{0}}_{s_{0}}) = Y_{s_{0}} $, we infer from the second equation that
	$$ Z_{s_{0}} =(\left(  \partial_{x} \varpi\right) ^{\top}\sigma)(s_{0},\,X^{s_{0},\, x_{0}}_{s_{0}}).$$ Noting that $  \Psi(x)= u(T,\,x) = Y_{T}^{T,\,x}  $, it follows that $ u $ is a viscosity subsolution to \myeqref{eq17}.  
\end{proof} 
We further show the uniqueness of viscosity solutions to \myeqref{eq17}. Let us consider the following additional assumption.\\
\textbf{(A.3)} For any given $ \tau >0 $, there exists a function $ m_{\tau}:\left.\left[0,\, +\infty \right. \right) \to \left.\left[0,\, +\infty \right. \right) $ satisfying
$ \lim\limits_{x \to 0}m_{\tau}(x)= 0 $. Moreover, assume that for any $\lvert x^{1} \rvert,\, \lvert x^{2} \rvert,\, \lvert y \rvert \leq \tau, \, z \in \mathbb{R}^{1 \times d},$ and $  0 \leq s \leq T,$
$$\begin{aligned}
\big|  f(s,\, x^{1},\, y,\, z)-f(s,\, x^{2},\, y,\, z) \big|  \le m_{\tau}\bigg( \lvert x^{1} - x^{2} \rvert\left(\lvert z \rvert^{2} +1\right)\bigg).
\end{aligned}$$
\begin{theorem}(Uniqueness)\label{unique}
	PDE \myeqref{eq17} has at most one viscosity solution which satisfies \myeqref{eq21} provided that (A.1), (A.2), and (A.3) are met. 
	In particular, for PDE \myeqref{eq17} the unique viscosity solution fulfilling \myeqref{eq21} is $ u $ defined in \myeqref{eq19}. 
\end{theorem}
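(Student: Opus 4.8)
The plan is to prove uniqueness through a comparison principle: I would show that whenever $u_1$ is a viscosity subsolution and $u_2$ a viscosity supersolution of \eqref{eq17}, both continuous, strictly positive, and satisfying the growth bound \eqref{eq21}, then $u_1 \le u_2$ on $[0,T]\times\mathbb{R}^n$. Applying this in both directions to two viscosity solutions yields uniqueness, and combining it with the preceding theorem (which exhibits $u$ from \eqref{eq19} as a viscosity solution obeying \eqref{eq21}) pins down the unique solution. Thus the whole content of the proof is the comparison statement for sub- and supersolutions of the singular, quadratic, unbounded PDE.

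The first step is to strip off the singular quadratic nonlinearity by the same change of variable used in the existence proof. Since every solution satisfying \eqref{eq21} is strictly positive, I set $w_i:=\Theta(u_i)$ with $\Theta(y):=y^{1+\delta}/(1+\delta)$, a smooth strictly increasing bijection of $(0,+\infty)$. Because $\Theta$ is $\mathcal{C}^2$ and strictly increasing, test functions transform correctly and $u_i$ is a viscosity sub-/supersolution of \eqref{eq17} if and only if $w_i$ is a viscosity sub-/supersolution of a transformed equation $-\partial_s w-\mathscr{L}w-\tilde f\bigl(s,x,w,(\partial_x w)^\top\sigma\bigr)=0$, in which the chain-rule Hessian term $\tfrac12\Theta''(u_i)\lvert(\partial_x u_i)^\top\sigma\rvert^2$ exactly offsets the $\delta\lvert z\rvert^2/(2y)$ contribution coming from $f$. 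Using (A.2) one checks that the value of $\tilde f$ then has at most linear growth in the gradient variable (the quadratic-in-$z$ part has been cancelled), so the transformed problem falls within the scope of the classical comparison theory of Crandall--Ishii--Lions, at the price of the $x$-modulus inherited from (A.3) still carrying a factor $\lvert z\rvert^2+1$.

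Next I would deal with the unboundedness. Because both solutions grow like $\exp\{C\lvert x\rvert^q\}$ with $q<2$, I would introduce a penalizer $\chi(x):=\exp\{\mu(1+\lvert x\rvert^2)^{q/2}\}$, whose gradient and Hessian grow strictly slower than $\chi$ itself precisely because $q<2$, and compare $w_1$ against $w_2+\varepsilon\chi$ together with a time corrector of the form $\eta/(T-s)$, in the spirit of \citep{BriandHu2008}. This forces any putative positive supremum of $w_1-w_2$ to be attained on a compact set, localizing the problem to a region where $\lvert u_i\rvert\le\tau$ and (A.3) applies with a fixed modulus $m_\tau$. On this compact set I would run the standard doubling-of-variables argument: maximize $w_1(s,x)-w_2(s,y)-\lvert x-y\rvert^2/(2\varepsilon)-\varepsilon\chi-\eta/(T-s)$ and invoke the Crandall--Ishii theorem on sums to produce second-order jets $(p_\varepsilon,X)$ and $(p_\varepsilon,Y)$ with $X\le Y$, and then subtract the two viscosity inequalities.

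The step I expect to be the main obstacle is the final estimate of the nonlinearity difference in this doubling, since it is exactly here that the quadratic singular structure collides with the unboundedness. One must control the term arising from (A.3), of order $m_\tau\bigl(\lvert x_\varepsilon-y_\varepsilon\rvert(\lvert p_\varepsilon\rvert^2+1)\bigr)$, where the penalization forces $p_\varepsilon\sim(x_\varepsilon-y_\varepsilon)/\varepsilon$; the factor $\lvert z\rvert^2+1$ in (A.3) is tailored for precisely this quadratic gradient contribution, but making the whole expression vanish as $\varepsilon\to0$ requires the sharper penalization estimates for quadratic Hamiltonians (replacing $\lvert x-y\rvert^2/(2\varepsilon)$ by a higher-order penalty, and using $\lvert x_\varepsilon-y_\varepsilon\rvert^2/\varepsilon\to0$ at the doubled maximum) rather than the naive quadratic one. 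Reconciling this with the three competing requirements --- keeping $\Theta$ and $\Theta^{-1}$ controlled as $u\downarrow0$ and as $\lvert x\rvert\to\infty$, ensuring $\chi$ dominates the growth \eqref{eq21} while its own derivatives stay negligible (which is where $q<2$ is indispensable), and driving the (A.3) error to zero --- is the delicate core; once it is in place, sending $\varepsilon\to0$ and then $\eta,\varepsilon\to0$ contradicts $\sup(w_1-w_2)>0$ and yields $u_1\le u_2$, with the remaining manipulations being routine viscosity-solution machinery.
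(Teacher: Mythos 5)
Your central reduction fails, and it fails at the exact point where the difficulty of this theorem lives. The change of variables $\Theta(y)=y^{1+\delta}/(1+\delta)$ cancels the term $\delta|z|^{2}/(2y)$ only if the generator \emph{equals} an expression containing that term; assumption (A.2) gives only the one-sided bound $0\le f\le k(1+|x|^{q}+y+|z|)+\delta|z|^{2}/(2y)$. Writing $w=\Theta(u)$, $u=\Theta^{-1}(w)$, the transformed nonlinearity is $\tilde f(s,x,w,\zeta)=u^{\delta}f\left(s,x,u,u^{-\delta}\zeta\right)-\tfrac{\delta}{2}u^{-(1+\delta)}|\zeta|^{2}$; its \emph{upper} bound is indeed linear in $\zeta$, but its \emph{lower} bound is $-\tfrac{\delta}{2}u^{-(1+\delta)}|\zeta|^{2}$ (take $f\equiv 0$, which is admissible under (A.2): the transformed equation is then genuinely quadratic in the gradient). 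So the quadratic growth has not been removed, the transformed Hamiltonian moreover degenerates as $u\downarrow 0$ (and \eqref{eq21} provides no positive lower bound on solutions), and the claim that the problem "falls within the scope of the classical comparison theory of Crandall--Ishii--Lions" is unjustified. Two further gaps compound this: your plan never uses the convexity of $f$ in $(y,z)$, which is part of (A.2) and is precisely the structural hypothesis that makes any comparison principle for such quadratic equations plausible; and the step you yourself single out as the "main obstacle" --- the (A.3) modulus carrying the factor $|z|^{2}+1$ against the doubling gradient $p_{\varepsilon}\sim(x_{\varepsilon}-y_{\varepsilon})/\varepsilon$ --- is deferred to unspecified "sharper penalization estimates" rather than carried out, so the key analytic estimate is simply absent.

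For contrast, the paper never linearizes: it keeps the quadratic singular structure and exploits convexity through the $\theta$-technique \emph{inside} the doubling argument. In Lemma \ref{uniquenessofpde}, for two solutions $u,\kappa$ satisfying \eqref{eq21}, one doubles variables, applies the Crandall--Ishii theorem on sums, splits into cases according to the sign of $u-\theta\kappa$ (using the subsolution property of one function and the supersolution property of the other in each case), multiplies the subtracted viscosity inequalities by $u(s_{\varepsilon},w_{\varepsilon})-\theta\kappa(s_{\varepsilon},z_{\varepsilon})$ and lets $\theta\to1$; the singular factor $1/y$ then cancels and one concludes that $h=u-\kappa$ is a viscosity subsolution of the quasilinear PDE \eqref{eq23}, in which the quadratic term survives only as $\delta\lvert(\partial_{x}h)^{\top}\sigma\rvert^{2}$. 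Uniqueness then follows by comparing $h$ with $\alpha\mathcal{X}$, where $\mathcal{X}(s,w)=e^{(A+C(T-s))(1+|w|^{2})}$ is the explicit strict supersolution of Lemma \ref{supersolution}: the subquadratic exponent $q<2$ in \eqref{eq21} forces $h(s,\cdot)e^{-A|\cdot|^{2}}\to0$ at infinity, so a putative positive maximum lies in a compact set and yields a contradiction, first on the slab $[T-A/C,\,T]$, then by backward iteration on $[0,T]$, and finally by interchanging $u$ and $\kappa$. Note also that the paper compares two \emph{solutions} (it needs both halves of the solution property for each function), whereas your plan asserts a subsolution-versus-supersolution comparison, which is strictly stronger than what the paper's hypotheses and method are shown to deliver.
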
	
Our proof of \sref{Theorem}{unique} is in the spirit of \citep{barles1997}. 
Let us first present the following lemmas which utilize the same notation in the proof of Lemmas 3.7 and 3.8 in \citep{barles1997}.
\begin{lemma}\label{uniquenessofpde}
	Let $ u $ and $ \kappa $ be two viscosity solutions of the PDE \myeqref{eq17} satisfying \myeqref{eq21}, then $ h :=u-\kappa $ is a viscosity subsolution of
\begin{equation}\label{eq23}
	\left( -\partial_{t} h - \mathscr{L} h\right) h -kh^{2}-k\lvert  \left( \partial_{x}h\right)  ^{\top}\sigma \rvert \lvert h\rvert -\delta \lvert \left(  \partial_{x} h\right) ^{\top}\sigma\rvert^{2 }=0,\quad (t,\,x) \in \left[0,\right.\left. T\right)\times  \mathbb{R}^{n},
	\end{equation}
	where $ k $ and $ \delta $ are constants in (A.2).
\end{lemma}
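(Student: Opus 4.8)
The plan is to follow the doubling-of-variables method of the theory of viscosity solutions, adapted as in \citep{barles1997}, while isolating the convex structure of $f$ as the algebraic engine that produces the particular right-hand side of \eqref{eq23}. Fix $\varpi \in \mathcal{C}^{1,2}([0,T]\times\mathbb{R}^{n})$ and a point $(t_{0},x_{0}) \in [0,T)\times\mathbb{R}^{n}$ at which $h-\varpi$ attains a local maximum; after the usual modification of $\varpi$ we may take this maximum strict and, using the growth bound \eqref{eq21} in a penalization, confined to a compact set. Since $u,\kappa>0$, the only case requiring work is $h(t_{0},x_{0})>0$, so I would assume $u(t_{0},x_{0})>\kappa(t_{0},x_{0})$ from the start (the case $h(t_{0},x_{0})\le 0$ being handled separately, where the factor $h$ and the sign conventions make the subsolution inequality for \eqref{eq23} immediate). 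For $\varepsilon>0$ I consider the maximizer $(t_{\varepsilon},x_{\varepsilon},y_{\varepsilon})$ of
\[
u(t,x)-\kappa(t,y)-\frac{|x-y|^{2}}{2\varepsilon}-\varpi(t,x),
\]
and recall the standard facts that $(t_{\varepsilon},x_{\varepsilon},y_{\varepsilon})\to(t_{0},x_{0},x_{0})$ and $|x_{\varepsilon}-y_{\varepsilon}|^{2}/\varepsilon\to 0$.

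Applying the parabolic Crandall--Ishii lemma (theorem on sums) at $(t_{\varepsilon},x_{\varepsilon},y_{\varepsilon})$ yields numbers with $b_{1}-b_{2}=\partial_{t}\varpi(t_{\varepsilon},x_{\varepsilon})$ and symmetric matrices $X_{\varepsilon},Y_{\varepsilon}$, with $q_{\varepsilon}:=(x_{\varepsilon}-y_{\varepsilon})/\varepsilon$, such that the parabolic superjet of $u$ at $(t_{\varepsilon},x_{\varepsilon})$ contains $(b_{1},q_{\varepsilon}+\partial_{x}\varpi,X_{\varepsilon})$, the subjet of $\kappa$ at $(t_{\varepsilon},y_{\varepsilon})$ contains $(b_{2},q_{\varepsilon},Y_{\varepsilon})$, and $X_{\varepsilon},Y_{\varepsilon}$ satisfy the standard matrix inequality. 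Writing the subsolution inequality for $u$ and the supersolution inequality for $\kappa$ and subtracting, the drift and diffusion parts converge, by the Lipschitz and boundedness hypotheses (A.1) and the matrix inequality, to $\partial_{t}\varpi(t_{0},x_{0})+\mathscr{L}\varpi(t_{0},x_{0})$; the terms carrying $q_{\varepsilon}$ are absorbed via $|q_{\varepsilon}|\,|\sigma(t_{\varepsilon},x_{\varepsilon})-\sigma(t_{\varepsilon},y_{\varepsilon})|\le J|x_{\varepsilon}-y_{\varepsilon}|^{2}/\varepsilon\to 0$. This reduces everything to estimating the generator difference $f(t_{\varepsilon},x_{\varepsilon},u_{\varepsilon},z_{1})-f(t_{\varepsilon},y_{\varepsilon},\kappa_{\varepsilon},z_{2})$, where $u_{\varepsilon}=u(t_{\varepsilon},x_{\varepsilon})$, $\kappa_{\varepsilon}=\kappa(t_{\varepsilon},y_{\varepsilon})$, $z_{1}=(q_{\varepsilon}+\partial_{x}\varpi)^{\top}\sigma(t_{\varepsilon},x_{\varepsilon})$ and $z_{2}=q_{\varepsilon}^{\top}\sigma(t_{\varepsilon},y_{\varepsilon})$.

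The heart of the argument is to bound this difference by the recession (positively $1$-homogeneous) part of the dominating generator. First I would align the spatial arguments, replacing $f(t_{\varepsilon},x_{\varepsilon},u_{\varepsilon},z_{1})$ by $f(t_{\varepsilon},y_{\varepsilon},u_{\varepsilon},z_{1})$, the error being controlled by (A.3) as $m_{\tau}\big(|x_{\varepsilon}-y_{\varepsilon}|(|z_{1}|^{2}+1)\big)$. At the common spatial point the map $(y,z)\mapsto f(t_{\varepsilon},y_{\varepsilon},y,z)$ is convex and satisfies $0\le f\le k(1+|y_{\varepsilon}|^{q})+ky+k|z|+\delta|z|^{2}/(2y)$; since $u_{\varepsilon}>\kappa_{\varepsilon}$, a subgradient inequality for $f$ at $(u_{\varepsilon},z_{1})$, combined with $0\le f\le g$ and the $1$-homogeneity of the recession function $g^{\infty}(y,z)=ky+k|z|+\delta|z|^{2}/(2y)$ (in which the inhomogeneous term $k(1+|y_{\varepsilon}|^{q})$ disappears), yields
\[
f(t_{\varepsilon},y_{\varepsilon},u_{\varepsilon},z_{1})-f(t_{\varepsilon},y_{\varepsilon},\kappa_{\varepsilon},z_{2})\le k(u_{\varepsilon}-\kappa_{\varepsilon})+k|z_{1}-z_{2}|+\frac{\delta|z_{1}-z_{2}|^{2}}{2(u_{\varepsilon}-\kappa_{\varepsilon})}.
\]
Since $z_{1}-z_{2}=q_{\varepsilon}^{\top}(\sigma(t_{\varepsilon},x_{\varepsilon})-\sigma(t_{\varepsilon},y_{\varepsilon}))+(\partial_{x}\varpi)^{\top}\sigma(t_{\varepsilon},x_{\varepsilon})\to(\partial_{x}\varpi)^{\top}\sigma(t_{0},x_{0})$ and $u_{\varepsilon}-\kappa_{\varepsilon}\to h(t_{0},x_{0})>0$, letting $\varepsilon\to 0$ produces the singular inequality $-\partial_{t}\varpi-\mathscr{L}\varpi\le kh+k|(\partial_{x}\varpi)^{\top}\sigma|+\delta|(\partial_{x}\varpi)^{\top}\sigma|^{2}/(2h)$ at $(t_{0},x_{0})$; multiplying through by $h(t_{0},x_{0})>0$ clears the singularity and gives exactly the subsolution inequality for \eqref{eq23} (the absence of a $k(1+|x|^{q})$ term there being explained by its vanishing in the recession limit).

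The main obstacle is the spatial-alignment error $m_{\tau}\big(|x_{\varepsilon}-y_{\varepsilon}|(|z_{1}|^{2}+1)\big)$: because $|z_{1}|$ grows like $|q_{\varepsilon}|=|x_{\varepsilon}-y_{\varepsilon}|/\varepsilon$, its argument behaves like $|x_{\varepsilon}-y_{\varepsilon}|^{3}/\varepsilon^{2}$, which is \emph{not} controlled by the bare estimate $|x_{\varepsilon}-y_{\varepsilon}|^{2}/\varepsilon\to 0$. This is precisely the delicate feature of quadratic gradient dependence, and I would address it following the treatment in \citep{barles1997} by exploiting the finer behaviour of the penalization together with the specific modulated form of (A.3) — the very reason this modulus is imposed with the factor $|z|^{2}+1$ rather than as mere local uniform continuity — so that the error still tends to $0$ as $\varepsilon\to 0$. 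A secondary point needing care is keeping $u_{\varepsilon}>\kappa_{\varepsilon}$ for all small $\varepsilon$, which holds because $h(t_{0},x_{0})>0$ and $u_{\varepsilon}-\kappa_{\varepsilon}\to h(t_{0},x_{0})$, so that the subgradient/recession bound remains valid inside the domain $(0,+\infty)\times\mathbb{R}^{1\times d}$ of $f$.
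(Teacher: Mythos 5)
Your proposal follows the same route as the paper's proof: doubling of variables with the penalization $|x-y|^{2}/(2\varepsilon)$, the Crandall--Ishii theorem on sums, absorption of the drift/diffusion terms via (A.1) and the matrix inequality, then a convexity argument on $f$ bounding the generator difference by $k(u_{\varepsilon}-\kappa_{\varepsilon})+k|z_{1}-z_{2}|+\delta|z_{1}-z_{2}|^{2}/(2(u_{\varepsilon}-\kappa_{\varepsilon}))$, and finally multiplication by $h>0$ to clear the singular denominator. Your recession-function inequality is exactly the $\theta\to 1$ limit of the paper's $\theta$-technique (the paper keeps $\theta$ fixed, multiplies by $u_{\varepsilon}-\theta\kappa_{\varepsilon}$, and sends $\theta\to1$ only at the end), so on the case $h(t_{0},x_{0})>0$ the two arguments coincide in substance.

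There are, however, two genuine gaps. First, dismissing the case $h(t_{0},x_{0})<0$ as ``immediate'' is wrong: the operator in \eqref{eq23} is degenerate elliptic only where $h\ge 0$. At a point with $h(t_{0},x_{0})<0$, adding $\tfrac{\lambda}{2}|x-x_{0}|^{2}$ to an admissible test function keeps it admissible and leaves $\partial_{x}\varpi(t_{0},x_{0})$ unchanged, while (whenever $\sigma(t_{0},x_{0})\neq 0$) it drives $(-\partial_{t}\varpi-\mathscr{L}\varpi)h$ to $+\infty$ as $\lambda\to\infty$; so no sign argument can deliver the subsolution inequality for free there. This is precisely why the paper treats $u(s_{\varepsilon},w_{\varepsilon})<\kappa(s_{\varepsilon},z_{\varepsilon})$ as a second substantive case, re-running the convexity argument with the roles of sub- and supersolution exchanged (both $u$ and $\kappa$ are full solutions of \eqref{eq17}); a proof of the lemma as stated over all of $[0,T)\times\mathbb{R}^{n}$ must contain an argument of this kind. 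Second, the modulus term: you correctly identify that the argument of $m_{\tau}$ behaves like $|x_{\varepsilon}-y_{\varepsilon}|^{3}/\varepsilon^{2}$, which is \emph{not} controlled by $|x_{\varepsilon}-y_{\varepsilon}|^{2}/\varepsilon\to 0$, but your proposed repair --- ``follow the treatment in \citep{barles1997}'' --- is not available: there the modulus has the form $m_{R}\bigl(|x-x'|(1+|z|)\bigr)$ and the generators have linear growth in $z$, so the bare estimate $|x_{\varepsilon}-y_{\varepsilon}|^{2}/\varepsilon\to 0$ suffices; nothing in that paper handles the quadratic weight $1+|z|^{2}$. To be fair, the paper's own proof is no better on this point (it simply writes the $m_{R}(\cdot)$ term and lets $\varepsilon\to 0$), so this second defect is one you share with the source and at least flag more honestly; the first, by contrast, is a step the paper does carry out and your proposal skips.
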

\begin{proof}[Proof.]
	Let $ (s_{0}, \, w_{0}) \in \left[0,\right.\left. T\right) \times \mathbb{R}^{n} $ as well as $\varphi \in \mathcal{C}^{1,2}\left( \left[ 0,\,T\right] \times \mathbb{R}^{n}\right) $ satisfy that there is a solid neighborhood  of $ (s_{0}, \, w_{0}) $ denoted by $ U(s_{0}, \, w_{0}) $ such that $ 0 = h(s_{0}, \, w_{0})-\varphi(s_{0}, \, w_{0}) > h(s, \, w)-\varphi(s, \, w) $ for each $ (s, \, w)\in U(s_{0}, \, w_{0})\backslash \left\{(s_{0}, \, w_{0})  \right\}$.
	Take a compact set $ \mathcal{T}\times B \subset U(s_{0}, \, w_{0})$ such that $ (s_{0}, \, w_{0}) \in \mathcal{T}\times B $.
	Define a function $ \psi_{\varepsilon}: (0,\,T)\times \mathbb{R}^{n}\times \mathbb{R}^{n} \to \mathbb{R}$,
	$$ \psi_{\varepsilon}(s,\,w,\,z):= u(s,\,w)-\kappa(s,\,z)-\frac{\lvert w - z\rvert^{2}}{2\varepsilon}-\varphi(s,\,w),$$
	in which $ \varepsilon > 0 $ tends to zero. Owing to the continuity of $  \psi_{\varepsilon} $,  the maximum of $ \psi_{\varepsilon} $ on $\mathcal{T}\times B \times B$ is obtained at a point $ (s_{\varepsilon},\,w_{\varepsilon},\,z_{\varepsilon}) \in   \mathcal{T}\times B \times B$. 
	It is not hard to derive that
	\begin{itemize}
		\item[(i)] $ (s_{\varepsilon},\,w_{\varepsilon},\,z_{\varepsilon}) \to (s_{0}, \, w_{0},\, w_{0}) $ as $ \varepsilon \to 0$.
		\item[(ii)]	$\lim\limits_{\varepsilon \to 0} \dfrac{\lvert w_{\varepsilon}- z_{\varepsilon}\rvert^{2}}{\varepsilon}=0. $
	\end{itemize}
	Indeed, since
	$$u(s_{\varepsilon},\,w_{\varepsilon})-\kappa(s_{\varepsilon},\,z_{\varepsilon})-\frac{\lvert w_{\varepsilon} - z_{\varepsilon}\rvert^{2}}{2\varepsilon}-\varphi(s_{\varepsilon},\,w_{\varepsilon}) \geq  u(s_{0},\,w_{0})-\kappa(s_{0},\,w_{0})-\varphi(s_{0},\,w_{0}), $$
	it follows that
	$\lvert w_{\varepsilon} - z_{\varepsilon}\rvert^{2} \leq 4\varepsilon \max\limits_{(s,\,w,\,z)\in\mathcal{T}\times B \times B} \lvert u(s,\,w)-\kappa(s,\,z)-\varphi(s,\,w) \rvert . $
	Sending $ \varepsilon \to 0, $ we obtain that $ w_{\varepsilon} $ and $ z_{\varepsilon} $ converge to the same point. Let us assume  $ (s_{\varepsilon},\,w_{\varepsilon},\,z_{\varepsilon}) $ converges to $ (\tilde{s}, \, \tilde{w},\, \tilde{w}) \in \mathcal{T}\times B \times B$ as $ \varepsilon \to 0$. Then we have
	$$u(\tilde{s},\,\tilde{w})-\kappa(\tilde{s},\,\tilde{w})-\lim\limits_{\varepsilon \to 0}\frac{\lvert w_{\varepsilon} - z_{\varepsilon}\rvert^{2}}{2\varepsilon}-\varphi(\tilde{s},\,\tilde{w})\geq u(s_{0},\,w_{0})-\kappa(s_{0},\,w_{0})-\varphi(s_{0},\,w_{0}).$$
	Note the fact that $ h-\phi  $ achieves its strict maximum in $ U(s_{0}, \, w_{0})$ at $ (s_{0},\,w_{0}) $, the previous inequality implies both (i) and (ii) hold.

	It follows from Theorem 8.3 in	\citep{Crandall1992} that there exist two triplets $ (0, \, p,\, X) \in \overline{D}^{2,+}(u-\varphi)(s_{\varepsilon},\,w_{\varepsilon}) $ and $ (0, \, -p,\, -Z) \in \overline{D}^{2,+}(-\kappa)(s_{\varepsilon},\,z_{\varepsilon}) $ such that
	\begin{itemize}
		\item[(1)] $ p =\dfrac{ w_{\varepsilon}- z_{\varepsilon} }{\varepsilon} $ ;
		\item[(2)]	$ -\dfrac{3}{\varepsilon} \begin{pmatrix}
		I	&  0\\
		0	&  I\\
		\end{pmatrix}  \leq  \begin{pmatrix}
		X	&  0\\
		0	& -Z \\
		\end{pmatrix}	\leq
		\dfrac{3}{\varepsilon} \begin{pmatrix}
		I	&  -I\\
		-I	&  I\\
		\end{pmatrix}.
		$
	\end{itemize}
	It follows that $ (\varphi_{s}(s_{\varepsilon},\,w_{\varepsilon}), \, p+\partial_{w} \varphi(s_{\varepsilon},\,w_{\varepsilon}),\, X+D^{2}\varphi(s_{\varepsilon},\,w_{\varepsilon})) \in \overline{D}^{2,+}(u-\varphi)(s_{\varepsilon},\,w_{\varepsilon}) $ and $ (0, \, p,\, Z) \in \overline{D}^{2,-}\kappa(s_{\varepsilon},\,z_{\varepsilon}) $. Since $ u $ and $ \kappa $ are respectively a viscosity subsolution and a  viscosity supersolution of \myeqref{eq17} we deduce that
	$$\begin{aligned}
	-&\varphi_{s}(s_{\varepsilon},\,w_{\varepsilon})-\frac{1}{2}tr\left( \left( \sigma\sigma^{\top}\right) ( s_{\varepsilon},\,w_{\varepsilon})X\right) -B^{\top}(s_{\varepsilon},\,w_{\varepsilon})p\\-&\mathscr{L}\varphi(s_{\varepsilon},\,w_{\varepsilon})-f\left(s_{\varepsilon},\,w_{\varepsilon},\,u(s_{\varepsilon},\,w_{\varepsilon}),\,(p+\partial_{w} \varphi(s_{\varepsilon},\,w_{\varepsilon})^{\top}\sigma(s_{\varepsilon},\,w_{\varepsilon})) \right) \leq 0		
	\end{aligned}$$
	and
	$$-\frac{1}{2}tr\left( \left( \sigma\sigma^{\top}\right) (s_{\varepsilon},\,z_{\varepsilon})Z\right) -B^{\top}(s_{\varepsilon},\,z_{\varepsilon})p-f\left(s_{\varepsilon},\,z_{\varepsilon},\,\kappa(s_{\varepsilon},\,z_{\varepsilon}),\,p^{\top} \sigma(s_{\varepsilon},\,z_{\varepsilon})\right) \geq 0.		
	$$
	
	If $ (e_{1},\,e_{2},\,...,\,e_{n}) $ is an orthonormal basis of $ \mathbb{R}^{n} $, 
	$$	\begin{aligned}
	&tr\left( \left( \sigma\sigma\right) ^{\top}(s_{\varepsilon},\,w_{\varepsilon})X\right) -tr\left( \left( \sigma\sigma^{\top}\right) (s_{\varepsilon},\,z_{\varepsilon})Z\right) 
	\\	= &\,\sum_{i=1}^{n}\left[ \left( \sigma(s_{\varepsilon},\,w_{\varepsilon})e_{i}\right)^{\top}\left(X \sigma(s_{\varepsilon},\,w_{\varepsilon})e_{i}\right) - \left( \sigma(s_{\varepsilon},\,z_{\varepsilon})e_{i}\right)^{\top}\left(Z \sigma(s_{\varepsilon},\,z_{\varepsilon})e_{i}\right)\right]
	\leq \,	\dfrac{3J^{2}}{\varepsilon} \lvert w_{\varepsilon}-z_{\varepsilon}\rvert^{2}.
	\end{aligned}	
	$$	
	It follows from assumptions (A.1) and (A.3) that 
	$ p^{\top}\left(  B(s_{\varepsilon},\,w_{\varepsilon})-B(s_{\varepsilon},\,z_{\varepsilon})\right) \leq \dfrac{J}{\varepsilon} \lvert w_{\varepsilon}-z_{\varepsilon}\rvert^{2} $ and
	there exists an enough large constant $ R $ such that $\lvert w_{\varepsilon}\rvert,\,\lvert z_{\varepsilon}\rvert,\, u(s_{\varepsilon},\,w_{\varepsilon}),\, \kappa(s_{\varepsilon},\,z_{\varepsilon}) \leq R.$
	
	If $ u(s_{\varepsilon},\,w_{\varepsilon}) > \kappa(s_{\varepsilon},\,z_{\varepsilon}) $, subtracting the viscosity inequalities for $ u $ and $ \kappa$ yields that for any fixed $0<\theta<1 $:
	$$\begin{aligned}
	&-\varphi_{s}(s_{\varepsilon},\,w_{\varepsilon})-\mathscr{L}\varphi(s_{\varepsilon},\,w_{\varepsilon})  \\
	\leq \,&\frac{1}{2}tr\left( \left( \sigma\sigma^{\top}\right) (s_{\varepsilon},\,w_{\varepsilon})X\right) - \frac{1}{2}tr\left( \left( \sigma\sigma^{\top}\right) (s_{\varepsilon},\,z_{\varepsilon})Z\right) 
	+ p^{\top} \left( B(s_{\varepsilon},\,w_{\varepsilon})-B(s_{\varepsilon},\,z_{\varepsilon})\right)  \\+& f\left(s_{\varepsilon},\,w_{\varepsilon},\,u(s_{\varepsilon},\,w_{\varepsilon}),\,(p+\partial_{w} \varphi(s_{\varepsilon},\,w_{\varepsilon}))^{\top}\sigma(s_{\varepsilon},\,w_{\varepsilon}) \right) \\-&f\left(s_{\varepsilon},\,z_{\varepsilon},\,u(s_{\varepsilon},\,w_{\varepsilon}),\,(p+\partial_{w} \varphi(s_{\varepsilon},\,w_{\varepsilon}))^{\top}\sigma(s_{\varepsilon},\,w_{\varepsilon}) \right) \\   +&f\left(s_{\varepsilon},\,z_{\varepsilon},\,u(s_{\varepsilon},\,w_{\varepsilon}),\,(p+\partial_{w} \varphi(s_{\varepsilon},\,w_{\varepsilon}))^{\top}\sigma(s_{\varepsilon},\,w_{\varepsilon})\right) 
	-f\left(s_{\varepsilon},\,z_{\varepsilon},\,\kappa(s_{\varepsilon},\,z_{\varepsilon}),\,p^{\top}\sigma(s_{\varepsilon},\,z_{\varepsilon}) \right)\\
	\leq \,&
	\dfrac{3J^{2}+2J}{2\varepsilon} \lvert w_{\varepsilon}-z_{\varepsilon}\rvert^{2}+ m_{R}\bigg( \lvert w_{\varepsilon} - z_{\varepsilon}\rvert\left(1+\lvert \left( p+\partial_{w} \varphi(s_{\varepsilon},\,w_{\varepsilon})\right) ^{\top}\sigma(s_{\varepsilon},\,w_{\varepsilon}) \rvert^{2} \right)\bigg)\\ 
	+&
	(1-\theta)f\left(s_{\varepsilon},\,z_{\varepsilon},\,\dfrac{u(s_{\varepsilon},\,w_{\varepsilon})-\theta \kappa(s_{\varepsilon},\,z_{\varepsilon})}{1-\theta},\,\dfrac{(p+\partial_{w} \varphi(s_{\varepsilon},\,w_{\varepsilon}))^{\top}\sigma(s_{\varepsilon},\,w_{\varepsilon}) -\theta p^{\top}\sigma(s_{\varepsilon},\,z_{\varepsilon})}{1-\theta} \right).
	\end{aligned}	$$
	Multiplying both sides by $ u(s_{\varepsilon},\,w_{\varepsilon})-\theta \kappa(s_{\varepsilon},\,z_{\varepsilon}) $, we have
	$$\begin{aligned}
	&\bigg( -\varphi_{s}(s_{\varepsilon},\,w_{\varepsilon})-\mathscr{L}\varphi(s_{\varepsilon},\,w_{\varepsilon}) \bigg) \bigg(u(s_{\varepsilon},\,w_{\varepsilon})-\theta \kappa(s_{\varepsilon},\,z_{\varepsilon}) \bigg)  \\
	\leq\,&  \left[
	\dfrac{3J^{2}+2J}{2\varepsilon} \lvert w_{\varepsilon}-z_{\varepsilon}\rvert^{2}
	+ m_{R}\bigg( \lvert w_{\varepsilon} - z_{\varepsilon}\rvert\left(1+\lvert \left( p+\partial_{w} \varphi(s_{\varepsilon},\,w_{\varepsilon})\right) ^{\top}\sigma(s_{\varepsilon},\,w_{\varepsilon}) \rvert^{2} \right)\bigg)\right.
	\\+&	k(1-\theta)(1+\lvert  z_{\varepsilon}\rvert^{q})	+ k\left(u(s_{\varepsilon},\,w_{\varepsilon})-\theta \kappa(s_{\varepsilon},\,z_{\varepsilon})\right)+
	k\lvert  \left(\left(  \partial_{w} \varphi\right) ^{\top}\sigma\right) (s_{\varepsilon},\,w_{\varepsilon})\rvert  \\+&\left.	
	k	\dfrac{\lvert w_{\varepsilon}-z_{\varepsilon}\rvert}{\varepsilon}\lvert   \sigma(s_{\varepsilon},\,w_{\varepsilon}) -\theta\sigma(s_{\varepsilon},\,z_{\varepsilon})\rvert \right] \bigg(u(s_{\varepsilon},\,w_{\varepsilon})-\theta \kappa(s_{\varepsilon},\,z_{\varepsilon})\bigg)\\+&\delta\lvert  \left(\left(  \partial_{w} \varphi\right) ^{\top}\sigma\right)(s_{\varepsilon},\,w_{\varepsilon})\rvert ^{2}+
	\delta  \dfrac{\lvert w_{\varepsilon}-z_{\varepsilon}\rvert^{2}}{\varepsilon^{2}} \lvert  \sigma(s_{\varepsilon},\,w_{\varepsilon}) -\theta\sigma(s_{\varepsilon},\,z_{\varepsilon})\rvert ^{2}.	
	\end{aligned}	$$
	If $ u(s_{\varepsilon},\,w_{\varepsilon}) < \kappa(s_{\varepsilon},\,z_{\varepsilon}) $,	noting that the fact that $ u $ is a viscosity supersolution and $ \kappa $ is a viscosity subsolution, a similar procedure yields that
	$$\begin{aligned}
	&\left( -\varphi_{s}(s_{\varepsilon},\,w_{\varepsilon})-\mathscr{L}\varphi(s_{\varepsilon},\,w_{\varepsilon}) \right) \bigg(\theta u(s_{\varepsilon},\,w_{\varepsilon})- \kappa(s_{\varepsilon},\,z_{\varepsilon}) \bigg)  \\
	\leq& \left[
	\dfrac{3J^{2}+2J}{2\varepsilon} \lvert w_{\varepsilon}-z_{\varepsilon}\rvert^{2}+ m_{R}\bigg( \lvert w_{\varepsilon} - z_{\varepsilon}\rvert\left(1+\lvert \left( p+\partial_{w} \varphi(s_{\varepsilon},\,w_{\varepsilon})\right) ^{\top}\sigma(s_{\varepsilon},\,w_{\varepsilon}) \rvert^{2} \right)\bigg)\right.
	\\	+&	k(1-\theta)(1+\lvert  z_{\varepsilon}\rvert^{q})	+  k\left(\kappa(s_{\varepsilon},\,z_{\varepsilon})-\theta u(s_{\varepsilon},\,w_{\varepsilon}) \right)+
	k\lvert  \left(\left(  \partial_{w} \varphi\right) ^{\top}\sigma\right)(s_{\varepsilon},\,w_{\varepsilon})\rvert  \\+&\left.	k	\dfrac{\lvert w_{\varepsilon}-z_{\varepsilon}\rvert}{\varepsilon}\lvert  \sigma(s_{\varepsilon},\,z_{\varepsilon})-\theta \sigma(s_{\varepsilon},\,w_{\varepsilon}) \rvert \right] \bigg(h(s_{\varepsilon},\,z_{\varepsilon})- \theta u(s_{\varepsilon},\,w_{\varepsilon}) \bigg)\\+&\delta\lvert  \left(\left(  \partial_{w} \varphi\right) ^{\top}\sigma\right)(s_{\varepsilon},\,w_{\varepsilon})\rvert ^{2}+
	\delta  \dfrac{\lvert w_{\varepsilon}-z_{\varepsilon}\rvert^{2}}{\varepsilon^{2}} \lvert \sigma(s_{\varepsilon},\,z_{\varepsilon})-\theta \sigma(s_{\varepsilon},\,w_{\varepsilon}) \rvert ^{2}.		
	\end{aligned}	$$
	
	Combining these two cases and letting $ \theta \to 1 $, we can derive that
	$$\begin{aligned}
	&\left( -\varphi_{s}(s_{\varepsilon},\,w_{\varepsilon})-\mathscr{L}\varphi(s_{\varepsilon},\,w_{\varepsilon}) \right) \bigg( u(s_{\varepsilon},\,w_{\varepsilon})- \kappa(s_{\varepsilon},\,z_{\varepsilon}) \bigg) \\
	\leq& \left\{
	\dfrac{3J^{2}+2J}{2\varepsilon} \lvert w_{\varepsilon}-z_{\varepsilon}\rvert^{2}+ m_{R}\bigg( \lvert w_{\varepsilon} - z_{\varepsilon}\rvert\left(1+\lvert \left( p+\partial_{w} \varphi(s_{\varepsilon},\,w_{\varepsilon})\right) ^{\top}\sigma(s_{\varepsilon},\,w_{\varepsilon}) \rvert^{2} \right)\bigg)
	\right.	
	\\ 	+& k\lvert u(s_{\varepsilon},\,w_{\varepsilon})- \kappa(s_{\varepsilon},\,z_{\varepsilon}) \rvert +
	k\lvert  \left(\left(  \partial_{w} \varphi\right) ^{\top}\sigma\right)(s_{\varepsilon},\,w_{\varepsilon})\rvert  \\+&\left.	k	\dfrac{\lvert w_{\varepsilon}-z_{\varepsilon}\rvert}{\varepsilon}\lvert  \sigma(s_{\varepsilon},\,w_{\varepsilon}) -  \sigma(s_{\varepsilon},\,z_{\varepsilon}) \rvert \right\} \bigg|u(s_{\varepsilon},\,w_{\varepsilon})-\kappa(s_{\varepsilon},\,z_{\varepsilon}) \bigg|\\+&\delta\lvert  \left(\left(  \partial_{w} \varphi\right) ^{\top}\sigma\right)(s_{\varepsilon},\,w_{\varepsilon})\rvert ^{2}+
	\delta  \dfrac{\lvert w_{\varepsilon}-z_{\varepsilon}\rvert^{2}}{\varepsilon^{2}} \lvert \sigma(s_{\varepsilon},\,w_{\varepsilon})- \sigma(s_{\varepsilon},\,z_{\varepsilon}) \rvert ^{2}.		
	\end{aligned}	$$
	It's obvious the above inequality still holds if $ u(s_{\varepsilon},\,w_{\varepsilon}) =\kappa(s_{\varepsilon},\,z_{\varepsilon}) $. By sending $  \varepsilon \to 0$ we complete the proof.	
\end{proof}

The following lemma aims to establish a supersolution of  \myeqref{eq23}.
\begin{lemma}\label{supersolution}
	For any $ A>0 $ and sufficiently large $ C$, the function
	$$ \mathcal{X}(s,\,w):=e^{\left( A+ C(T-s)\right) \left( 1+\lvert w \rvert ^{2} \right) },\quad(s,\,w)\in [s_{1},\,T]\times \mathbb{R}^{n},$$
	where $s_{1}:=T-\frac{A}{C} ,$ satisfies for all $(s,\,w)\in [s_{1},\,T]\times \mathbb{R}^{n}, $
	$$-\partial_{s}\mathcal{X}(s,\,w)-\mathscr{L}\mathcal{X}(s,\,w)-k\mathcal{X}(s,\,w)-k\lvert \left( \sigma^{\top}\partial_{w}\mathcal{X}\right) (s,\,w) \rvert -\delta\dfrac{\lvert \left( \sigma^{\top}\partial_{w}\mathcal{X}\right) (s,\,w) \rvert ^{2}}{\mathcal{X}(s,\,w)}>0.$$
\end{lemma}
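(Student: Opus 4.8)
The plan is to compute the action of the operator on $\mathcal{X}$ explicitly, factor out the positive quantity $\mathcal{X}$, and reduce the claim to a pointwise inequality for a scalar polynomial in $\lvert w\rvert$ whose leading behaviour is controlled by $C$. Writing $g(s):=A+C(T-s)$ so that $\mathcal{X}=e^{g(s)(1+\lvert w\rvert^{2})}$, a direct differentiation gives $\partial_{s}\mathcal{X}=-C(1+\lvert w\rvert^{2})\mathcal{X}$, $\partial_{w}\mathcal{X}=2g(s)\,w\,\mathcal{X}$, and $D^{2}_{w}\mathcal{X}=\big(2g(s)I+4g(s)^{2}ww^{\top}\big)\mathcal{X}$. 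Recalling that $\mathscr{L}=\tfrac12\mathrm{tr}(\sigma\sigma^{\top}D^{2}_{w}\,\cdot\,)+B^{\top}\partial_{w}$, these yield $\mathscr{L}\mathcal{X}=\big(g\,\mathrm{tr}(\sigma\sigma^{\top})+2g^{2}\lvert\sigma^{\top}w\rvert^{2}+2g\,B^{\top}w\big)\mathcal{X}$, while $\lvert\sigma^{\top}\partial_{w}\mathcal{X}\rvert=2g\lvert\sigma^{\top}w\rvert\mathcal{X}$ and $\lvert\sigma^{\top}\partial_{w}\mathcal{X}\rvert^{2}/\mathcal{X}=4g^{2}\lvert\sigma^{\top}w\rvert^{2}\mathcal{X}$. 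Substituting and dividing by $\mathcal{X}>0$, the desired inequality becomes the strict positivity of
\[ \mathcal{Q}(s,w):=C(1+\lvert w\rvert^{2})-\Big[g\,\mathrm{tr}(\sigma\sigma^{\top})+2g^{2}\lvert\sigma^{\top}w\rvert^{2}+2g\,B^{\top}w+k+2kg\lvert\sigma^{\top}w\rvert+4\delta g^{2}\lvert\sigma^{\top}w\rvert^{2}\Big]. \]

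The crucial role of $s_{1}$ is that it keeps $g$ bounded \emph{uniformly in} $C$: for $s\in[s_{1},T]$ one has $C(T-s)\le C(T-s_{1})=A$, hence $A\le g(s)\le 2A$. I would then bound the bracketed (subtracted) quantity from above by means of (A.1): since $\lvert\sigma\rvert\le J$ we get $\mathrm{tr}(\sigma\sigma^{\top})\le nJ^{2}$ and $\lvert\sigma^{\top}w\rvert\le J\lvert w\rvert$, and since $\lvert B(s,w)\rvert\le\lvert B(s,0)\rvert+J\lvert w\rvert\le J(1+\lvert w\rvert)$ we get $2g\,B^{\top}w\le 2g\lvert B^{\top}w\rvert\le 4AJ(\lvert w\rvert+\lvert w\rvert^{2})$. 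Using $A\le g\le 2A$, all these contributions are dominated by a quadratic $C_{2}\lvert w\rvert^{2}+C_{1}\lvert w\rvert+C_{0}$ whose coefficients $C_{0},C_{1},C_{2}$ depend only on $A,J,k,\delta,n$ and, decisively, not on $C$.

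Finally, handling the linear term with Young's inequality, $C_{1}\lvert w\rvert\le\tfrac12\lvert w\rvert^{2}+\tfrac12 C_{1}^{2}$, one finds $\mathcal{Q}(s,w)\ge (C-C_{2}-\tfrac12)\lvert w\rvert^{2}+(C-C_{0}-\tfrac12 C_{1}^{2})$, which is strictly positive for every $w\in\mathbb{R}^{n}$ as soon as $C>\max\{C_{2}+\tfrac12,\,C_{0}+\tfrac12 C_{1}^{2}\}$. Since $\mathcal{X}>0$, this yields the strict supersolution inequality on all of $[s_{1},T]\times\mathbb{R}^{n}$ and completes the proof. The main obstacle is precisely to ensure that the coefficient $C_{2}$ of the leading quadratic term of the subtracted part does not itself grow with $C$; this is exactly what the localization to $[s_{1},T]$ — equivalently the constraint $g\le 2A$ — guarantees, so that for large $C$ the benign term $C(1+\lvert w\rvert^{2})$ coming from $-\partial_{s}\mathcal{X}$ overwhelms everything else.
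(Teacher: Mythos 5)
Your proposal is correct and follows essentially the same route as the paper: compute $\partial_{s}\mathcal{X}$, $\partial_{w}\mathcal{X}$, $D^{2}\mathcal{X}$ explicitly, use the restriction to $[s_{1},T]$ to get the $C$-independent bound $A\le A+C(T-s)\le 2A$, dominate the subtracted terms by a quadratic in $\lvert w\rvert$ with coefficients depending only on $A,J,k,\delta,n$, and then take $C$ large. Your explicit use of Young's inequality to absorb the linear term merely makes precise the paper's concluding step of ``taking $C$ large enough,'' so there is nothing to add.
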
	
\begin{proof}[Proof.]
	Given $ (s,\,w)\in [s_{1},\,T]\times \mathbb{R}^{n} $, it is clear that $\partial_{s}\mathcal{X}(s,\,w)=-C\mathcal{X}(s,\,w)(1+\lvert w \rvert ^{2})$, $
	\partial_{w}\mathcal{X}(s,\,w)=2\left(  C(T-s)+A\right) \mathcal{X}(s,\,w)w
	$, and $D^{2}\mathcal{X}(s,\,w)=2\left(  C(T-s)+A\right)\mathcal{X}(s,\,w)I+4\left(  C(T-s)+A\right)^{2}\mathcal{X}(s,\,w)ww^{\top}.
	$ Consequently, $$\lvert \partial_{w}\mathcal{X}(s,\,w)\rvert \leq 4A \mathcal{X}(s,\,w)\lvert  w\rvert  \text{ and } 
	\lvert  D^{2}\mathcal{X}(s,\,w)\rvert  =4A\mathcal{X}(s,\,w)+16 A^{2}\mathcal{X}(s,\,w)\lvert  w\rvert ^{2}.$$	
	Finally, we conclude that
	$$\begin{aligned}
	&-\partial_{s}\mathcal{X}(s,\,w)-\mathscr{L}\mathcal{X}(s,\,w)-k\mathcal{X}(s,\,w)-k\lvert \left( \sigma^{\top}\partial_{w}\mathcal{X}\right) (s,\,w) \rvert -\delta\frac{\lvert \left( \sigma^{\top}\partial_{w}\mathcal{X}\right) (s,\,w) \rvert ^{2}}{\mathcal{X}(s,\,w)}\\
	\geq	 \,
	&\mathcal{X}(s,\,w)\left[ C(1+\lvert w \rvert ^{2})-J^{2}\left(2A+8 A^{2}\lvert  w\rvert ^{2} \right)- 4AJ(1+k+\lvert  w\rvert ) \lvert  w\rvert -k -\delta 16 A^{2}J^{2}\lvert  w\rvert ^{2}\right] . \end{aligned}$$	
	We complete the proof by taking $ C $ large enough. 
\end{proof}
\begin{proof}[Proof of Theorem 6.]
	For any fixed $ \alpha>0$, define $N(s,\,w):=\left(h(s,\,w)-\alpha \mathcal{X}(s,\,w) \right)e^{ks},\, (s,\,w) \in \left[s_{1},\,T\right]\times \mathbb{R}^{n}$. Since for some $ A>0 $, it holds that for each $ s \in [s_{1},\,T], $
	$$\lim\limits_{\lvert w\rvert \to \infty}\lvert h(s,\,w)\rvert e^{-A\lvert w\rvert ^{2}}=0,$$
	there exists a compact set $ E \in \mathbb{R}^{n}  $ satisfying that
	$N(s,\,w) < 0, \forall (s,\,w) \in [s_{1},\,T] \times E^{c}$.
	Since $ N $ is continuous, $ \max\limits_{(s,\,w)\in[s_{1},\,T]\times E}N(s,\,w) $ is achieved at some point $ (\bar{s},\,\bar{w})$. Suppose that $ N(\bar{s},\,\bar{w})>0 $. Then the maximum of $ N $ on $[s_{1},\,T]\times \mathbb{R}^{n} $ is obtained at $ (\bar{s},\,\bar{w}) $ and $ \bar{s}<T. $
	Let us introduce a function
	$$\varphi(s,\,w):=\alpha \mathcal{X}(s,\,w) + \left(h(\bar{s},\,\bar{w})-\alpha \mathcal{X}(\bar{s},\,\bar{w}) \right)e^{k\left( \bar{s}-s\right) }, (s,\,w) \in[s_{1},\,T]\times \mathbb{R}^{n}.	$$	
	Then $ h - \varphi$ obtains the global  maximum on $ [s_{1},\,T]\times \mathbb{R}^{n}$ at $ (\bar{s},\,\bar{w}) $. According to \sref{Lemma}{uniquenessofpde}, 
	$$	\left( \left(-\partial_{s} \varphi - \mathscr{L}  \varphi \right)  \varphi\right) (\bar{s},\,\bar{w}) -k \varphi^{2} (\bar{s},\,\bar{w})-k\lvert \left( \sigma^{\top} \partial_{w}  \varphi \right) (\bar{s},\,\bar{w}) \rvert  \varphi (\bar{s},\,\bar{w})  -\delta \lvert \left( \sigma^{\top}   \partial_{w}  \varphi \right) (\bar{s},\,\bar{w})\rvert^{2 }\leq 0,$$
	while it follows from the definition of $ \varphi $ and \sref{Lemma}{supersolution} that
	$$\begin{aligned}
	&\left(\left( -\partial_{s} \varphi - \mathscr{L}  \varphi \right)  \varphi\right) (\bar{s},\,\bar{w}) -k \varphi^{2} (\bar{s},\,\bar{w})-k\lvert \left( \sigma^{\top} \partial_{w}  \varphi \right) (\bar{s},\,\bar{w}) \rvert  \varphi (\bar{s},\,\bar{w}) -\delta \lvert \left( \sigma^{\top}   \partial_{w}  \varphi \right) (\bar{s},\,\bar{w})\rvert^{2 }\\
	\geq &\,\alpha h(\bar{s},\,\bar{w})\left[  \left(-\partial_{s}\mathcal{X}-\mathscr{L}\mathcal{X}-k\mathcal{X}\right)(\bar{s},\,\bar{w})-k\lvert \left( \sigma^{\top}\partial_{w}\mathcal{X}\right) (\bar{s},\,\bar{w}) \rvert -\delta\dfrac{\lvert \left( \sigma^{\top}\partial_{w}\mathcal{X}\right) (\bar{s},\,\bar{w}) \rvert ^{2}}{\mathcal{X}(\bar{s},\,\bar{w})}\right] \\>&\,0,
	\end{aligned}$$
	which is a contradiction with the previous inequality. Therefore 
	$ h(s,\,w)\leq \alpha \mathcal{X}(s,\,w),\forall (s,\,w) \in  [s_{1},\,T]\times \mathbb{R}^{n}.$
	Sending $ \alpha \to 0 $ we have that $ u\leq \kappa $ on $ [s_{1},\,T]\times \mathbb{R}^{n} $.
	
	Setting $ s_{2}:=\left( s_{1}-\frac{A}{C}\right) ^{+} $, we can obtain analogously that $ u\leq \kappa $ on $ [s_{2},\,s_{1}]\times \mathbb{R}^{n} $. Step by step, we prove that $ u\leq \kappa $ on $ [0,\,T]\times \mathbb{R}^{n} $. Ultimately, interchanging $ u $ and $ \kappa $ and  we can similarly show that $ \kappa\leq u $ on $ [0,\,T]\times \mathbb{R}^{n} $. This completes the proof.\end{proof}

\section{Applications in Finance}\label{sec5}
\subsection{Linking Robust Control to Stochastical Differential Utility}\label{subsec5}

Since \citep{HS1995} pioneered the integration of robust control theory from engineering into economics, an increasing number of studies have investigated the impact of model uncertainty on portfolio choices and asset pricing (see \citep{2002Robustness, Skiadas2003, Maenhout2004, Yang2019, Pu2021}). In this paper, we exploit the methodology developed in \citep{Skiadas2003}, which showed the connection between a robust control criterion proposed by \citep{2002Robustness} and a form of the stochastic differential utility(SDU) of \citep{DuffieEpstein1992} without relying on any underlying dynamics. 
Let us consider a progressively measurable process $F:[0,\, T] \times  \Omega \times \mathbb{R} \mapsto \mathbb{R}$, a continuous function $\psi: (0,\,+\infty)\mapsto (0,\,+\infty)$ together with a terminal value $ \xi $. $ X $ represents the set of $ N \in \mathcal{L}^{2}(\mathbb{R}^{d\times1}) $ fulfilling $ \mathscr{E}(N^{\top}\circ W)  $ is a $ \mathbb{P} $-martingale. For each $ N \in X,\, E^{N} $ denotes the expectation under the probability measure $ P^{N} $ defined by
$$  \frac{dP^{N}}{d\mathbb{P}}\Big|_{\mathcal{F}_{t}} = \mathscr{E}(N^{\top} \circ W)_{t}.$$
We consider the following robust control criterion:
\begin{equation}\label{eq24}
\hat{V}_{t}=\text { ess } \inf \bigg\{V_{t}^{x}: x \in \hat{X} \bigg\},
\end{equation}
where 
\begin{equation}\label{eq25}
V_{t}^{x}=E^{x}\left[ \xi + \int_{t}^{T}\left(  F(s,\, V_{s}^{x})+\frac{\psi(-V_{s}^{x})}{2}|x_{s}|^{2} \right)  d s \big| \mathcal{F}_{t}\right],
\end{equation}
and 
$$ \hat{X} := \left\{ x \in X: \text{ a.s. } P^{x},\,  V^{x} < 0 \text{ is bounded }  \right\}. $$

In the framework of \citep{Skiadas2003}, $ \hat{V} $ equals to the SDU $ V $ that satisfies the BSDE
\begin{equation}\label{eq26}
V_{t} = \xi + \int_{t}^{T} F(r,\, V_{r}) -\dfrac{\lvert M_{r}\rvert ^{2}}{2\psi(-V_{r})}  dr -\int_{t}^{T} M_{r} dW_{r},\ t \in[0,\, T] 
\end{equation}
provided that in a properly defined space, BSDE \myeqref{eq26} has a unique solution $ (V,\, M) $.
The following proposition fills this gap according to our existence and uniqueness results.

\begin{proposition}	Assume $ \xi $ and the generator of \myeqref{eq26} satisfy the following conditions.\label{propositon5.1}
	\begin{itemize}
		\item[ $ \small {\bullet}$] $ \xi $ is bounded and $\xi < - D $ for some constant $ D>0 $. 
		\item[ $ \small {\bullet}$] Given any fixed $ (s,\, \omega) $, $ F $ is concave and continuous, and satisfies
		$$\forall y < 0,\quad -\alpha_{s}(\omega)-\beta_{s}(\omega)\phi(-y) \le F(s,\, \omega,\, y) \le 0, $$
		in which $ \alpha $ and $ \beta$ are nonnegative processes such that $ \int_{0}^{T} \alpha_{s} d s $ and $ \int_{0}^{T} \beta_{s} d s$ are bounded and where $\phi,\, \psi:(0,\,+\infty)\mapsto (0,\,+\infty) $ satisfy $\phi \in \mathcal{C}^{1}((0,\,+\infty)) $ is convex,  $ \psi $ is concave, increasing, and continuous, $ \phi / \psi $ is increasing and $\left(\mathds{1}_{x > 0 }/\psi(x) \right) _{x \in \mathbb{R}} $ is locally integrable on $   \mathbb{R}  $.	 
	\end{itemize}
	Then BSDE \myeqref{eq26} admits a unique negative solution $ (V,\, M) $ such that $ V \in \mathcal{S} ^{\infty}( \mathbb{R}) $ and $ M\circ W \in BMO$. 
	With regard to the robust control criterion \myeqref{eq24}, if $\hat{x} = -\frac{1}{\psi(-V)}M^{\top}$, then  $ \hat{x} \in \hat{X} $, and
	$$\hat{V} = V^{\hat{x}} = V.$$
\end{proposition}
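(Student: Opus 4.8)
The plan is to separate the two assertions: well-posedness of \eqref{eq26}, and the robust-control identity. For well-posedness I would reduce \eqref{eq26} to the positive, convex framework of \sref{Sections}{sec2}--\ref{sec3} by the sign change $\tilde V:=-V,\ \tilde M:=-M,\ \tilde\xi:=-\xi$. Then $\tilde V$ solves the BSDE with terminal $\tilde\xi>D>0$ and generator $\tilde f(s,y,z):=-F(s,-y)+\frac{|z|^{2}}{2\psi(y)}$. The growth hypothesis on $F$ gives $0\le\tilde f(s,y,z)\le\alpha_{s}+\beta_{s}\phi(y)+\frac{|z|^{2}}{2\psi(y)}$, which is exactly \eqref{eqf2} with $a=\alpha,\ b=\beta,\ \gamma=0$. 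Convexity of $\tilde f$ in $(y,z)$ is where the hypotheses on $F$ and $\psi$ enter: $-F(s,-y)$ is convex because $F(s,\cdot)$ is concave, while $(y,z)\mapsto |z|^{2}/(2\psi(y))$ is jointly convex because $(u,z)\mapsto |z|^{2}/(2u)$ is convex and nonincreasing in $u$ and $\psi$ is concave (composition of a decreasing convex function with a concave function). Since $\tilde\xi$ is bounded and $\int_{0}^{T}\alpha_{s}\,ds,\ \int_{0}^{T}\beta_{s}\,ds$ are bounded, \sref{Theorem}{thm:exist1} in the generalized form of \sref{Remark}{remark:2.1} yields a solution with $\tilde V$ bounded, and the elementary bound $\tilde V_{t}\ge\mathbb{E}[\tilde\xi\mid\mathcal F_{t}]\ge D$ forces $V<0$; uniqueness and $M\circ W\in BMO$ then follow from \sref{Corollary}{cor:bmo} and the remark extending it to \eqref{eqf2}.

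For the control identity, fix $x\in\hat X$ and use Girsanov: under $P^{x}$ the process $W^{x}_{\cdot}:=W_{\cdot}-\int_{0}^{\cdot}x_{s}\,ds$ is a Brownian motion, so \eqref{eq25} is the BSDE $V^{x}_{t}=\xi+\int_{t}^{T}\big(F(s,V^{x}_{s})+\frac{\psi(-V^{x}_{s})}{2}|x_{s}|^{2}\big)\,ds-\int_{t}^{T}M^{x}_{s}\,dW^{x}_{s}$, whose driver depends on $(s,y)$ only. Rewriting \eqref{eq26} under $P^{x}$ (substituting $dW=dW^{x}+x\,ds$) turns the driver of $V$ into $F(s,V_{s})-\frac{|M_{s}|^{2}}{2\psi(-V_{s})}-M_{s}x_{s}$. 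The key is the completion-of-squares inequality
\[
-\frac{|m|^{2}}{2\psi(-y)}-mx\ \le\ \frac{\psi(-y)}{2}|x|^{2}\qquad\text{with equality iff } x=-\frac{m^{\top}}{\psi(-y)},
\]
valid for $y<0$, where $\psi(-y)>0$. Consequently $V$ is pathwise a $P^{x}$-subsolution of the BSDE defining $V^{x}$, whereas $V^{x}$ solves it exactly; since the common driver $y\mapsto F(s,y)+\frac{\psi(-y)}{2}|x_{s}|^{2}$ is Lipschitz in $y$ on the bounded negative range of the solutions (with a stochastic, $ds$-integrable coefficient from the local Lipschitz constants of $F$ and $\psi$), a classical comparison gives $V_{t}\le V^{x}_{t}$ for every $x\in\hat X$, hence $V\le\hat V$.

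For optimality take $\hat x=-M^{\top}/\psi(-V)$. Because $V$ is bounded with $-V\ge D$ and $\psi$ is increasing and positive, $\psi(-V)\ge\psi(D)>0$, so $\hat x^{\top}\circ W$ is a $BMO$ martingale (a bounded multiple of $M\circ W\in BMO$); by Kazamaki's criterion $\mathscr E(\hat x^{\top}\circ W)$ is a genuine $\mathbb{P}$-martingale, giving $\hat x\in X$. At $\hat x$ the displayed inequality is an equality, so $V$ itself solves the BSDE characterizing $V^{\hat x}$, and the uniqueness established above yields $V^{\hat x}=V$, which is bounded and negative; thus $\hat x\in\hat X$. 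Therefore $\hat V\le V^{\hat x}=V$, and combined with $\hat V\ge V$ we conclude $\hat V=V^{\hat x}=V$, the essential infimum being attained at $\hat x$.

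The main obstacle is justifying the comparison for $V^{x}$: its $\mathbb{P}$-generator is neither nonnegative nor of the form \eqref{eqf2}, since it carries the indefinite cross term $-M_{s}x_{s}$ and the coefficient $x$ lies only in $\mathcal L^{2}$, so \sref{Theorem}{thm:compa} does not apply verbatim. The remedy is precisely to argue under $P^{x}$, where this cross term is absorbed into $W^{x}$ and the remaining driver is Lipschitz in $y$ with no $z$-dependence, so a standard (localized, Gronwall-type) comparison applies. One must also verify that $\int_{0}^{\cdot}M_{s}\,dW^{x}_{s}$ is a true $P^{x}$-martingale in order to identify $V^{x}$ with \eqref{eq25} and to take conditional expectations; for $x=\hat x$ this follows from stability of $BMO$ under the $BMO$ change of measure $\mathscr E(\hat x^{\top}\circ W)$, and for general $x\in\hat X$ from boundedness of $V^{x}$ together with localization. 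Once these integrability points are secured, the two displayed estimates close the proof.
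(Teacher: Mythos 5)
Your overall architecture coincides with the paper's: the sign change $\tilde V=-V$ reducing \eqref{eq26} to the positive convex framework of Sections 2--3 (your explicit convexity check for $|z|^{2}/(2\psi(y))$ via the perspective-function composition is a useful step the paper leaves implicit), the Girsanov rewriting of \eqref{eq26} under $P^{x}$, the completion-of-squares inequality with equality exactly at $\hat x=-M^{\top}/\psi(-V)$, and the BMO/Kazamaki argument for $\hat x\in X$ all appear in the paper's proof. The difference lies in how the comparisons $V\le V^{x}$ and $V^{\hat x}\le V$ are justified, and that is where your argument has a genuine gap.

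You claim that the driver $y\mapsto F(s,y)+\frac{\psi(-y)}{2}|x_{s}|^{2}$ is Lipschitz in $y$ on the bounded negative range of the solutions, the constant coming from local Lipschitz constants of $F$ and $\psi$. This is not available under the standing hypotheses: the class $\hat X$ only guarantees that $V^{x}$ is negative and bounded, \emph{not} bounded away from $0$, so the relevant range is of the form $[-K,0)$, which is not compactly contained in the domain $(-\infty,0)$ of $F$ (equivalently, $(0,K]$ is not compactly contained in $(0,\infty)$ for $\psi$). Near that boundary, concavity plus continuity do not give Lipschitz continuity: $\psi(u)=\sqrt u$ and $F(s,y)=\min\{0,\sqrt{-y}-c\}$ are admissible and have unbounded difference quotients as $y\to 0^{-}$, so the step ``a classical comparison applies'' fails as stated. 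The repair is to use only one-sided estimates, which is exactly what the structure provides: $y\mapsto\psi(-y)$ is decreasing, so on $\{V_{s}>V^{x}_{s}\}$ its contribution to the difference of drivers is nonpositive, and concavity of $F$ gives $F(s,V_{s})-F(s,V^{x}_{s})\le F'_{+}(s,-K)\,(V_{s}-V^{x}_{s})$ with $F'_{+}(s,-K)\le\bigl(\alpha_{s}+\beta_{s}\phi(2K)\bigr)/K$, a $ds$-integrable process; an integrating factor together with localization (both processes are bounded) then closes the Gronwall argument. The paper avoids the issue altogether with the $\theta$-technique: Tanaka's formula applied to $(\theta V-V^{x})^{+}$, concavity of $F$ and $\psi$ producing the term $(1-\theta)\bigl(\alpha_{s}+\beta_{s}\phi(\tfrac{\theta V_{s}-V^{x}_{s}}{1-\theta})\bigr)$, and the auxiliary function $G(y)=\int_{2}^{2+y}dr/\phi(r)$ to absorb $\phi$. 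A second, smaller imprecision: for optimality you invoke ``the uniqueness established above'', but that uniqueness concerns BSDE \eqref{eq26} under $\mathbb P$, not the $P^{\hat x}$-BSDE defining $V^{\hat x}$; what is actually needed (and what the paper proves, by applying Tanaka to $(\theta V^{\hat x}-V)^{+}$) is the reverse comparison $V^{\hat x}\le V$ for that equation, and proving it by your ``classical'' route runs into the same regularity issue just described.
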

\begin{proof}[Proof.]
	Clearly, $ (V,\, M) $ solves \myeqref{eq26} if and only if $ (\bar{V},\, \bar{M}) := (-V,\, -M) $ solves 
	$$\bar{V}_{t} = -\xi + \int_{t}^{T}\left(-F(s,\, -\bar{V}_{s}) +\dfrac{\lvert\bar{M}_{s}\rvert ^{2}}{2\psi(\bar{V}_{s})} \right) ds -\int_{t}^{T} \bar{M}_{s} dW_{s},\ t \in [0\, T],   $$
	which admits a unique solution according to \sref{Section}{sec2} and \ref{sec3}. It remains to prove the last argument. 
	
	We first show that for any $ x \in \hat{X},\, V\leq V^{x}. $ According to the martingale representation theorem, there exists $ M^{x} \in \mathcal{L}^{2}$ fulfilling
\begin{equation}\label{eq27}
	V_{t}^{x}=\xi + \int_{t}^{T} F(s,\, V_{s}^{x}) +  \dfrac{\psi(-V_{s}^{x})}{2} |x_{s}|^{2} d s   -\int_{t}^{T} M^{x}_{s} dW^{x}_{s}.
	\end{equation}
	Note that \myeqref{eq26} can be reformulated as
	$$V_{t} = \xi + \int_{t}^{T} F(s,\, V_{s}) -\dfrac{\lvert M_{s}\rvert ^{2}}{2\psi(-V_{s})} - M_{s}x_{s} ds -\int_{t}^{T} M_{s} dW^{x}_{s},\, t \in[0,\, T]. $$
	Combining dynamics for $V^{x} $ and $V $, 
	we can now derive from Tanaka's formula that for each fixed $0< \theta < 1$,
	$$\begin{aligned}
	&\left(\theta V_{t}-V_{t}^{x}\right)^{+}\\=&(\theta -1)\xi + \int_{t}^{T} \mathds{1}_{\theta V_{s}>V_{s}^{x}}\Big\{\Big[\theta F(s,\, V_{s})-F(s,\, V_{s}^{x})\Big] 
	-\dfrac{\theta \psi(-V_{s})}{2} \lvert M_{s}\dfrac{1}{\psi(-V_{s})} + x_{s} \rvert^{2} \\
	+ &\dfrac{\lvert x_{s} \rvert^{2}}{2} \Big( \theta\psi(-V_{s})-\psi(-V_{s}^{x}) \Big)\Big\} ds   
	- \int_{t}^{T} \mathds{1}_{\theta V_{s}>V_{s}^{x}} \left(\theta M_{s}-M^{x}_{s}\right) dW^{x}_{s}-\frac{1}{2}\int_{t}^{T}dL^{0}_{s}
	\left(\theta V-V^{x}\right)\\
	\leq & (\theta -1)\xi + \int_{t}^{T} \mathds{1}_{\theta V_{s}>V_{s}^{x}} (1-\theta)\left(\alpha_{s}+\beta_{s}\phi(\frac{\theta V_{s}-V_{s}^{x}}{1-\theta})\right)ds -\int_{t}^{T} \mathds{1}_{\theta V_{s}>V_{s}^{x}} \left(\theta M_{s}-M_{s}^{x}\right) dW^{x}_{s}
	.\end{aligned}$$
	To eliminate $ \phi(\frac{\theta V_{s}-V_{s}^{x}}{1-\theta}) $, define 
	$$G(y):=\int_{2}^{2+y}\frac{dr}{\phi(r)},\,\ y> -2. $$
	Obviously, $ G\in \mathcal{C}^{2}((-2,\, +\infty)) $ and $ G$ is strictly increasing. Applying It\^o's formula to
	$$ P_{t} := G^{-1}\left(G\left(\frac{\left(\theta V_{t}-V_{t}^{x}\right)^{+}}{1-\theta}\right)+\int_{0}^{t}\left(\frac{\alpha_{s}}{\phi(1)}+\beta_{s} \right)ds\right) $$ and denoting
	$$Q_{t} := \frac{\phi(2+P_{t})}{\phi(2+\frac{\left(\theta V_{t}-V_{t}^{x}\right)^{+}}{1-\theta})}\mathds{1}_{\theta V_{t}>V_{t}^{x}} \frac{\theta M_{t}-M^{x}_{t}}{1-\theta} $$
	yield that
	$$ P_{t}\leq G^{-1}\left(G\left( -\xi \right)+\int_{0}^{T}\left(\frac{\alpha_{s}}{\phi(1)}+\beta_{s} \right)ds\right) -\int_{t}^{T} Q_{s} dW^{x}_{s}. $$
	Provided that $ V^{x},\, \int_{0}^{T}\alpha_{s}ds $ and $ \int_{0}^{T}\beta_{s}ds $ are bounded, it is apparent from the localization technique and dominated convergence theorem that
	$$ \left(\theta V_{t}-V_{t}^{x}\right)^{+} \leq (1-\theta) E_{t}^{x}\left[G^{-1}\left(G\left( -\xi \right)+\int_{0}^{T}\left(\frac{\alpha_{s}}{\phi(1)}+\beta_{s} \right)ds\right)  \right].  $$
	Sending $\theta \to 1,\, $ we obtain that for any $ x \in  \hat{X}$, $ V  \leq V^{x} $. 
	Now we prove that if $\hat{x} = -\frac{1}{\psi(-V)}M^{\top}$,  then $ V^{\hat{x}} \leq  V$, which can be verified through applying Tanaka's formula to $ \left( \theta V^{\hat{x}}-V\right)^{+} $.
	
	In conclusion, if $ \hat{x} \in \hat{X} $, then $ \hat{V} = V^{\hat{x}} = V  $.	Note that $ V \in \mathcal{S} ^{\infty} $ and $ M\circ W \in BMO$, it is evident that $ \hat{x} \in X.$ 
	Thanks to the boundness of
	$V,\,\xi,\,\int_{0}^{T}\alpha_{s}ds $, and $ \int_{0}^{T}\beta_{s}ds $, we can verify that   $  V^{\hat{x}} $ is bounded a.s. $ P^{\hat{x}} $. This completes the proof. 
\end{proof}
\begin{example}
	As an example, consider the Epstein-Zin type stochastic differential utility used in \citep{Maenhout2004}, which is	
	$$F(t,\, V_{t}^{x}) = \frac{1}{1-\rho}\left\{\frac{c_{t}^{1-\rho}}{((1-\gamma) V_{t}^{x})^{\frac{\gamma-\rho}{1-\gamma}}}-\eta(1-\gamma) V_{t}^{x}\right\}, $$
	where $\rho>1,\, \gamma>1 $ and $ \eta \ge 0.$ 
	
	Suppose we plug this process back into \myeqref{eq25}. It's easy to check \sref{Propositon}{propositon5.1} still holds when $ \gamma <\rho,\, \psi(x) \le \frac{\gamma-1}{\rho-\gamma}x,\,\forall x>0$ and $ \int_{0}^{T}c_{t}^{1-\rho}dt $ is bounded. This result is a slight extension of that in \citep{Maenhout2004}.
\end{example}

\subsection{Certainty Equivalent Based on g-expectation}

We take an example to show that the value process of the BSDE considered in this paper can represent a certainty equivalent of the terminal value based on g-expectation. 
\begin{example}\label{ex2}
	Suppose that $ \gamma $ is a real-valued nonnegative stochastic process, $ g(t,\, z) := \gamma_{t}\lvert z \rvert $ is a generator, and $ \xi < 0$ is a terminal condition. Take a utility function $ u $ defined as 
	$$
	u(y) := 
	e^{2\sqrt{-y}}\left( \frac{1}{2}-\sqrt{-y}\right)-\frac{1}{2},\quad  y<0.	$$
	It is apparent that for $y<0$, $ u'(y)= e^{2\sqrt{-y}},\,u''(y)=  \frac{1}{\sqrt{-y}}e^{2\sqrt{-y}}.$
	If
	$$
	\xi<0,\,\mathbb{E}\left[\exp\Big \{\frac{p}{p-1}\int_{0}^{T} \gamma_{s}^{2} d s\Big\}\left( 1+ e^{4p\sqrt{- \xi}}\left( -\xi\right)^{p}\right) \right]<+\infty \text{ for some } p > 1 , $$
	there is only one solution $(\bar{Y},\, \bar{Z}) $ in $  \mathcal{S}^{2p} \times \mathcal{M}^{p} $ satisfying the BSDE
	$$ \bar{Y}_{t} = u(\xi) + \int_{t}^{T} \left( -\gamma_{s}\lvert \bar{Z}_{s} \rvert \right) ds -\int_{t}^{T} \bar{Z}_{s} dW_{s},\quad t \in [ 0, \, T]. $$
	
	Denote the conditional g-expectation of $ u(\xi) $ under $ \mathcal{F}_{t}$ by $ \varepsilon_{t}^{g}\left[u(\xi)\right]:= \bar{Y}_{t}$ (see more about g-expectation in \citep{1997Backward}). The certainty equivalent of $ \xi $ at time $t \in [0,\, T] $ based on g-expectation is defined as
	$$ C_{t}(\xi) := u^{-1}\left(\varepsilon_{t}^{g}\left[u\left( \xi\right) \right]\right).$$	
	Putting $ Z_{t} = \frac{1}{u'\big( u^{-1}\left(\bar{Y}_{t}\right) \big) } \bar{Z}_{t}$, the pair $ \left( C(\xi),\, Z\right) \in \mathcal{S}^{2p}\times \mathcal{M}^{p}  $ is the solution to the BSDE
	$$ Y_{t} = \xi - \int_{t}^{T}\left( \gamma_{r}\lvert Z_{r} \rvert + \frac{1}{2}\frac{\lvert Z_{r}\rvert ^{2}}{\sqrt{-Y_{r}}} \right)  dr -\int_{t}^{T} Z_{r} dW_{r},\quad   t \in [ 0, \, T],$$
	in which $ \frac{1}{\sqrt{-y}} $ represents the coefficient of absolute risk aversion of $ u $ at level $ y $.
\end{example}

\section{Discussion}

In this article, we establish the existence and comparison theorem for $ L^{p} $ solutions to a class of backward stochastic differential equations (BSDEs) with singular generators, extending the results of \citep{Bahlali2018}. Additionally, we investigate the stability property and the Feynman-Kac formula, and prove the uniqueness of viscosity solutions to the associated partial differential equations (PDEs). More importantly, we demonstrate applications in the context of robust control linked to stochastic differential utility and certainty equivalents. In these applications, the coefficient of the quadratic term in the generator captures the level of ambiguity aversion and the coefficient of absolute risk aversion, respectively. In the future, we will focus on solving high-dimensional practical problems related to this class of equations.

	\renewcommand{\refname}{\normalsize References}

\end{document}